\newcommand{\NN}{\mathbb N}
\newcommand{\EE}{\mathbb E}
\newcommand{\RR}{\mathbb R}
\newcommand{\Ff}{\mathcal L}
\newcommand{\PP}{\mathcal P}
\newcommand{\C}{\mathbb C}
\newcommand{\SL}{\textrm{SL}}
\newcommand{\GL}{\textrm{GL}}
\newcommand{\ZZ}{\mathbb Z}
\newcommand{\R}{\mathbb R}
\newcommand{\E}{\mathbb E}
\newcommand{\Q}{\mathbb Q}
\newcommand{\p}{\mathbb P}
\newcommand{\HD}{\textrm{HD}}
\newtheorem{proposition}{Proposition}[section]
\newtheorem{lemme}[proposition]{Lemme}
\newtheorem{propdef}[proposition]{Proposition/Définition}
\newtheorem{corollaire}[proposition]{Corollaire}
\newtheorem{theorem}[proposition]{Théorème}
\newtheorem{definition}[proposition]{Definition}
\newtheorem{remarque}[proposition]{Remarque}
\newtheorem{exemple}[proposition]{Exemple}
\newcommand{\n}{\mathds{N}}
\newcommand{\Z}{\mathbb{Z}}
\providecommand{\keywords}[1]{\textbf{\textit{Keywords: \,}} #1}
\title{Comptage probabiliste sur la fronti\`{e}re de Furstenberg}
\author{\Large{Aoun Richard\footnote{American University of Beirut, Department of Mathematics, Faculty of Arts and Sciences, P.O. Box 11-0236 Riad El Solh, Beirut 1107 2020, LEBANON
E-mail address: ra279@aub.edu.lb}}}
\date{}
\begin{document}
\maketitle
\begin{abstract}Soit $G$ un groupe lin\'{e}aire alg\'{e}brique r\'{e}el semi-simple  sans facteurs compacts et $\Gamma$ un sous-groupe Zariski dense.  Nous nous int\'{e}ressons aux propri\'{e}t\'{e}s asymptotiques de $\Gamma$ vis-\`{a}-vis de
la fronti\`{e}re de Furstenberg de $G$. Nous montrons en un premier temps que
les composantes de $\Gamma$ dans la d\'{e}composition $KAK$ de $G$
deviennent asymptotiquement ind\'{e}pendantes, r\'{e}sultat analogue \`{a}
un th\'{e}or\`{e}me de Gorodnik-Oh  \cite{gorodnik}  pour le comptage des
points de l'orbite d'un r\'{e}seau dans un espace sym\'{e}trique. Par la
suite, nous donnons une nouvelle preuve d'un r\'{e}sultat de
Guivarc'h \cite{Guivarch3} concernant la positivit\'{e} de la
dimension des mesures stationnaires sur la fronti\`{e}re de
Furstenberg. Finalement, nous combinons ces r\'{e}sultats pour
donner une preuve probabiliste de l'alternative de Tits
\cite{tits}, \`{a} savoir deux \'{e}l\'{e}ments g\'{e}n\'{e}riques
de $\Gamma$ engendrent un sous-groupe libre. Ce r\'{e}sultat est
un cas particulier d'un travail ant\'{e}rieur \cite{aoun} et
a r\'{e}pondu \`{a} une question de Guivarc'h \cite{Guivarch3}.
Nous en donnons une preuve plus directe et un \'{e}nonc\'{e} plus g\'{e}n\'{e}ral.
\end{abstract}

\tableofcontents
\section{Introduction}
Soit $G$ un groupe lin\'{e}aire alg\'{e}brique r\'{e}el semi-simple  sans
facteurs compacts et $\Gamma$ un sous-groupe Zariski dense, comme
par exemple $G=\SL_n(\RR)$ et  $\Gamma=\SL_n(\ZZ)$.\\ Il existe,
de fa\c{c}on g\'{e}n\'{e}rale, divers types de comptage permettant d'estimer
le nombre d'\'{e}l\'{e}ments de $\Gamma$ qui appartiennent \`{a} un certain
sous-ensemble $D$ de $G$. Le comptage archim\'{e}dien consiste \`{a}
consid\'{e}rer un $G$-espace homog\`{e}ne $B=G/P$ muni d'une m\'{e}trique
convenable, \`{a} recouvrir $B$
 par des boules $(B_T)_{T\in \RR^+}$ de plus en plus grosses et \`{a} estimer, quand $T$ tend vers $+\infty$,
  le nombre d'\'{e}l\'{e}ments $\gamma$ de $\Gamma$ qui appartiennent \`{a} $D$ et tels que $\gamma \cdot x_0\in B_T$, o\`{u} $x_0$ est un point fix\'{e} de $B$.\\
Si le cas d'un r\'{e}seau $\Gamma$ est bien compris \cite{eskin},
\cite{drs}, \cite{eam}, \cite{ems}, \cite{gorodnik},
  le cas d'un
 ``groupe fin'', i.e. un sous-groupe Zariski dense mais de covolume infini, s'av\`{e}re  beaucoup plus
 d\'{e}licat  et suscite un int\'{e}r\^{e}t croissant pour ses applications en arithm\'{e}tique (voir \cite{sarnaksurvey}).
  Dans \cite{quintcomptage} et \cite{quintens}, Quint traite le cas o\`{u} $\Gamma$ est de type Schottky pour un comptage
  sur l'espace sym\'{e}trique associ\'{e} \`{a} $G$.  R\'{e}cemment, Oh et Shah \cite{ohshah}, \cite{ohsurvey} \'{e}tudient le cas d'un
  sous-groupe $\Gamma$ fin de $\operatorname{SO}(n,1)$.\\

Dans ce texte, nous utilisons un comptage probabiliste ayant
l'avantage de ne  pas faire la diff\'{e}rence entre
 un r\'{e}seau et un groupe fin. En fait, $\Gamma$ n'a m\^{e}me pas besoin d'\^{e}tre discret, ni de type fini; seule l'hypoth\`{e}se  Zariski dense suffit.
  La m\'{e}thode consiste \`{a} consid\'{e}rer une mesure de probabilit\'{e} $\mu$ sur $\Gamma$ dont le support engendre tout le groupe et
  \`{a} \'{e}tudier le comportement de $\mu^n(D)$ quand $n$ tend vers $+\infty$, o\`{u} $\mu^n$ est la $n^{\textrm{\`{e}me}}$ convol\'{e}e de
  $\mu$. Quand $\Gamma$ est de type fini et $\mu$ la probabilit\'{e} uniforme sur une partie g\'{e}n\'{e}ratrice symétrique finie, cela revient \`{a} faire un comptage sur la boule  de centre l'identit\'{e} et de rayon $n$ pour la m\'{e}trique des mots. Les m\'{e}thodes utilis\'{e}es dans notre texte reposent sur la th\'{e}orie des produits
   de matrices al\'{e}atoires \cite{Furst}, \cite{kesten1},   \cite{guivarch},  \cite{Guivarch3}, \cite{guilocal}, \cite{bougerol},
   \cite{page}.\\

  Dans la Section \ref{secasymp}, nous donnons une ind\'{e}pendance asymptotique des parties $K$ des \'{e}l\'{e}ments de $\Gamma$ dans la
  d\'{e}composition $KA^+K$ de $G$ (voir la Section \ref{secalg}). Notre r\'{e}sultat est   parall\`{e}le
   \`{a} un th\'{e}or\`{e}me de comptage archim\'{e}dien de Gorodnik-Oh \cite{gorodnik}  valable uniquement
   pour $\Gamma$ r\'{e}seau (voir le Th\'{e}or\`{e}me \ref{gorodoh}).\\
    \noindent Soit $P$ un parabolique minimal de $G$, $M$ le centralisateur de $A$ dans $K$ et  $B=G/P \simeq K/M$ la fronti\`{e}re de Furstenberg de $G$ (par exemple, pour $G=\SL_n(\RR)$, $B$ est la vari\'{e}t\'{e} des drapeaux complets). Pour tout $g\in G$, notons  $g=k(g)a(g)u(g)$ le r\'{e}sultat dans la d\'{e}composition $KA^+K$. Nous obtenons:
\begin{theorem}(Ind\'{e}pendance asymptotique pour un comptage probabiliste)\\
Soit $\Gamma$ un sous-groupe Zariski dense de $G$ muni d'une mesure de probabilit\'{e}  $\mu$ dont le support engendre $\Gamma$. On suppose que $\mu$ a un moment exponentiel\footnote{Par exemple, si $\Gamma$ est de type fini et le support de $\mu$ est une partie g\'{e}n\'{e}ratrice de $\Gamma$.} (D\'{e}finition \ref{defimoment}).  On note $\nu$ (resp. $\nu^*$) l'unique  mesure  de probabilit\'{e} $\mu$-invariante  d\'{e}finie  sur la fronti\`{e}re de Furstenberg $B=G/P$ (resp. $B^*=P\backslash G$) (voir le Th\'{e}or\`{e}me \ref{unicite}).  Alors, il existe $\rho\in ]0,1[$ tel que pour toute fonction $\phi$   lipschitzienne\footnote{voir la D\'{e}finition \ref{distfront} pour la m\'{e}trique utilis\'{e}e}  sur $B\times B^*$ de constante de Lipschitz  \textrm{Lip($\phi$)} et pour tout entier $n$ assez grand:
$$\Big|\int_{\Gamma} {\phi\Big(k(g) M,M u(g)\Big) \,d\mu^n{(g)} \;  } - \int_{B \times B^*} {  \phi \,d{({\nu}\otimes {\nu^*})}\; } \Big|\leq \textrm{Lip($\phi$)}\; \rho^n$$
\label{indasymp}\end{theorem}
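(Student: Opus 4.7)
Soit $(X_i)_{i\geq 1}$ une suite i.i.d. de loi $\mu$ et $S_n = X_1\cdots X_n$. L'id\'ee consiste \`a scinder $S_n$ au milieu : en posant $m=\lfloor n/2\rfloor$, $Y_1 = X_1\cdots X_m$ et $Y_2 = X_{m+1}\cdots X_n$, on a $S_n = Y_1 Y_2$ avec $Y_1$ et $Y_2$ ind\'ependants. On cherche \`a montrer d'abord que $k(S_n)M$ co\"incide avec $k(Y_1)M$ \`a une erreur exponentiellement petite pr\`es, et de mani\`ere sym\'etrique que $Mu(S_n)$ co\"incide avec $Mu(Y_2)$ ; l'ind\'ependance des deux blocs fournit ensuite la factorisation asymptotique cherch\'ee.

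\textbf{Ingr\'edient principal.} Le moment exponentiel de $\mu$ et la Zariski-densit\'e de $\Gamma$ entra\^inent, via un th\'eor\`eme de type Le Page, la contraction exponentielle de la marche al\'eatoire sur la fronti\`ere : il existe $\rho_0 \in ]0,1[$ et $C>0$ tels que pour tout point g\'en\'erique $b_0 \in B$ et tout $k$,
$$\EE\big[d_B(Z_k\cdot b_0,\, k(Z_k)M)\big] \leq C\rho_0^k,$$
o\`u $Z_k$ est un produit de $k$ facteurs i.i.d. de loi $\mu$ ; un \'enonc\'e dual vaut sur $B^*$ pour la marche de loi conjugu\'ee $\check\mu$. \'Ecrivant $S_n\cdot b_0 = Y_1\cdot(Y_2\cdot b_0)$, on obtient alors par in\'egalit\'e triangulaire
$$\EE\big[d_B(k(S_n)M,\, k(Y_1)M)\big] \leq C'\rho_0^{\lfloor n/2\rfloor},$$
car $Y_2\cdot b_0$ reste g\'en\'erique avec une probabilit\'e exponentiellement proche de $1$. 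Un argument dual, fond\'e sur la relation entre $u(g)$ et le $k$-facteur de $g^{-1}$ via l'involution de Cartan, livre $\EE\big[d_{B^*}(Mu(S_n),\, Mu(Y_2))\big] \leq C'\rho_0^{n-m}$.

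\textbf{Conclusion par ind\'ependance.} Pour $\phi$ lipschitzienne sur $B\times B^*$, ces deux approximations donnent
$$\Big| \EE[\phi(k(S_n)M,\, Mu(S_n))] - \EE[\phi(k(Y_1)M,\, Mu(Y_2))] \Big| \leq C''\,\textrm{Lip}(\phi)\,\rho_0^{n/2}.$$
Par ind\'ependance de $Y_1$ et $Y_2$, le terme de droite est une int\'egrale contre la mesure produit $\mu^m\otimes\mu^{n-m}$. Une seconde application du r\'esultat de contraction de Le Page fournit la convergence exponentielle, en distance de type Wasserstein, des lois $(k(\cdot)M)_*\mu^m \to \nu$ sur $B$ et $(Mu(\cdot))_*\mu^{n-m} \to \nu^*$ sur $B^*$. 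Combin\'ee \`a Fubini et \`a la lin\'earit\'e, ceci donne l'existence du taux $\rho\in ]0,1[$ voulu, \`a partir duquel on obtient la majoration de l'\'enonc\'e.

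\textbf{Obstacle principal.} La difficult\'e technique r\'eside dans le caract\`ere uniforme, en esp\'erance, de l'approximation $Y_1\cdot b \approx k(Y_1)M$ lorsque $b$ est al\'eatoire et potentiellement corr\'el\'ee \`a la direction r\'epulsive de $Y_1$ ; si $b$ s'approche de ce ``mauvais'' voisinage, l'approximation s'effondre. Bien que $Y_1$ et $Y_2$ soient ind\'ependants, assurer que $Y_2\cdot b_0$ \'evite ce voisinage critique avec une probabilit\'e exponentiellement proche de $1$ requiert des estim\'ees fines de grandes d\'eviations \`a la Le Page-Guivarc'h. Par ailleurs, la gestion rigoureuse de la dualit\'e entre $B$ et $B^*$, via l'involution de Cartan et le parabolique oppos\'e, n\'ecessite un travail alg\'ebrique pr\'eliminaire soign\'e sur les conventions.
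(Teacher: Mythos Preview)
Your overall strategy---splitting the walk at $\lfloor n/2\rfloor$ and exploiting the independence of the two halves---is exactly the skeleton of the paper's proof. The paper packages this as an abstract ``asymptotic independence lemma'' (Lemme~\ref{lemmeind}): if $F_n=\alpha(g_1\cdots g_n)$ and $H_n=\beta(g_n\cdots g_1)$ both converge exponentially fast, then $F_n$ and $G_n=\beta(g_1\cdots g_n)$ become asymptotically independent with exponential rate. One then applies this with $\alpha(g)=k(g)M$ and $\beta(g)=Mu(g)$.

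Where the two approaches diverge is in the mechanism for the key approximations, and here the paper's route sidesteps precisely the \emph{obstacle principal} you identify.

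For the $k$-part, rather than comparing $k(S_n)M$ to $k(Y_1)M$ via the action on a base point $b_0$ (which, as you note, forces you to control the random point $Y_2\cdot b_0$ uniformly, and in particular to keep it away from the repulsive hyperplane of $Y_1$), the paper proves \emph{directly} that the sequence $n\mapsto k(g_1\cdots g_n)M$ is exponentially Cauchy (Th\'eor\`eme~\ref{convkak}). This is done via an orthogonal-projection argument \`a la Goldsheid--Margulis, bounding $\|Q_{e_1}(n+1)-Q_{e_1}(n)\|$ by singular-value ratios $a_j(n)/a_1(n)$, without ever acting on the boundary. Once $F_n$ is exponentially Cauchy, $\EE[\delta(F_n,F_{n/2})]\leq\rho^{n/2}$ is immediate and no uniformity in a base point is needed.

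For the $u$-part, the paper uses a trick you did not spot: since $(g_1,\ldots,g_n)$ and $(g_n,\ldots,g_1)$ have the same law,
\[
\EE\big[\delta\big(Mu(g_1\cdots g_n),\,Mu(g_{n/2+1}\cdots g_n)\big)\big]
=\EE\big[\delta(H_n,H_{n/2})\big],
\]
and the right-hand side is small because $H_n=Mu(g_n\cdots g_1)$ (the \emph{left} walk) is itself exponentially Cauchy by the dual version of Th\'eor\`eme~\ref{convkak}. This completely avoids any discussion of the Cartan involution, the opposite parabolic, or repulsive directions.

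In short: your plan is correct in architecture, but the uniformity issue you flag is a genuine obstruction for the route you propose and would require extra input (regularity of $\nu^*$, large deviations for hitting hyperplanes). The paper's two devices---direct Cauchy-ness of $k(x_n)M$ via orthogonal projections, and the law-reversal identity for the $u$-component---dissolve that obstacle entirely and make the proof self-contained.
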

 Des r\'{e}sultats  similaires pour la d\'{e}composition d'Iwasawa ont \'{e}t\'{e}
 d\'{e}montr\'{e}s par Guivarc'h dans son travail des ann\'{e}es 1990: \cite[Th\'{e}or\`{e}me 6', Lemme 8]{Guivarch3}.
  Notre travail en est fortement inspir\'{e}.\\
\noindent La pr\'{e}sence de $M$ dans cet \'{e}nonc\'{e} est naturelle car $k(g)$ (resp. $u(g)$) est unique modulo $M$ pour les classes
\`{a} gauches (resp. pour les classes \`{a} droite).
  La preuve est divis\'{e}e en deux grandes \'{e}tapes.  La premi\`{e}re est
la convergence presque s\^{u}re des parties $K$ avec vitesse exponentielle: Th\'{e}or\`{e}me \ref{convkak}. Si la convergence est classique \cite{bougerol}, la vitesse de convergence est par contre moins connue.
 Cette derni\`{e}re a \'{e}t\'{e} trait\'{e}e dans un travail ant\'{e}rieur \cite{aoun}.
  Cependant, le fait que nous travaillons uniquement dans le corps des r\'{e}els  nous permet de donner une  preuve
   plus directe qui ne repose pas  sur  la d\'{e}composition d'Iwasawa mais sur des  arguments de Goldsheid-Margulis
   dans leur preuve du th\'{e}or\`{e}me d'Oseledets \cite{Margulis}.\\
La deuxi\`{e}me partie est le Lemme \ref{lemmeind}: r\'{e}sultat g\'{e}n\'{e}ral d'ind\'{e}pendance asymptotique valable sur tout groupe lin\'{e}aire ou non.
Malgr\'{e} la simplicit\'{e} des arguments utilis\'{e}s,  ce lemme pourrait avoir une port\'{e}e g\'{e}n\'{e}rale pour
 s\'{e}parer les points attractifs et r\'{e}pulsifs d'un groupe de type
 Schottky.\\

Dans la Section \ref{sechausdorff}, nous donnons une nouvelle preuve d'un r\'{e}sultat de Guivarc'h qui montre  la positivit\'{e}
de la dimension de Hausdorff des mesures stationnaires et donc de l'ensemble limite de $\Gamma$. Notre preuve n'utilise pas la d\'{e}composition d'Iwasawa. Elle repose uniquement sur la convergence exponentielle de la marche al\'{e}atoire sur la fronti\`{e}re (Th\'{e}or\`{e}me \ref{convdirection}).
\begin{theorem}\cite[Th\'{e}or\`{e}me 7']{Guivarch3} (Positivit\'{e} de la dimension de Hausdorff)\\
Soit $\Gamma$ un sous-groupe Zariski dense de $G$ muni d'une mesure de probabilit\'{e}  $\mu$ dont le support engendre $\Gamma$. On suppose que $\mu$ a un moment exponentiel (D\'{e}finition \ref{defimoment}).  On note $\nu$ l'unique mesure $\mu$-invariante sur $B$. Alors $\HD(\nu)>0$. Ici, $\HD(\nu)$ est la dimension de Hausdorff de $\nu$ (voir la Section \ref{sechausdorff}). \label{hausdorff}\end{theorem}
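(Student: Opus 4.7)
Le plan est d'\'{e}tablir, par le principe de distribution de masse, l'existence d'un exposant $s > 0$ tel que
$$M(r) := \sup_{\xi \in B} \nu(B(\xi,r)) \leq C r^s$$
pour tout $r > 0$ assez petit; ceci entra\^{i}ne imm\'{e}diatement $\HD(\nu) \geq s > 0$. La strat\'{e}gie repose sur une r\'{e}currence multi-\'{e}chelles combinant la $\mu$-stationnarit\'{e} de $\nu$ et la convergence exponentielle sur la fronti\`{e}re (Th\'{e}or\`{e}me \ref{convdirection}).

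Concr\`{e}tement, la stationnarit\'{e} it\'{e}r\'{e}e donne, pour tout $n \geq 1$ et tout $\xi \in B$,
$$\nu(B(\xi,r)) = \int_G \nu(g^{-1} B(\xi,r)) \, d\mu^n(g).$$
Le Th\'{e}or\`{e}me \ref{convdirection} assure qu'avec probabilit\'{e} au moins $1 - \tau^n$ (pour un certain $\tau < 1$), l'\'{e}l\'{e}ment al\'{e}atoire $g = Z_n$ est fortement contractant : il admet un point attractif $\xi_+(g) \in B$ et $g$ envoie tout $B$, sauf un voisinage d'une direction r\'{e}pulsive, dans une boule de rayon $\leq e^{-cn}$ autour de $\xi_+(g)$. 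Dualement, pour $\xi$ \`{a} distance au moins $\delta$ du point attractif $\xi_+(g)$, le pr\'{e}image $g^{-1} B(\xi,r)$ est contenu dans une boule de rayon au plus $C r e^{-cn}/\delta$, contribuant donc au plus $M(C r e^{-cn}/\delta)$. L'\'{e}v\`{e}nement oppos\'{e}, $\xi \in B(\xi_+(g), \delta)$, a probabilit\'{e} major\'{e}e par $M(2\delta) + \tau^n$, car la loi de $\xi_+(Z_n)$ converge vers $\nu$ avec vitesse exponentielle (toujours cons\'{e}quence du Th\'{e}or\`{e}me \ref{convdirection}). On obtient ainsi l'in\'{e}galit\'{e}-cl\'{e}
$$M(r) \leq M\bigl(C r e^{-cn}/\delta\bigr) + M(2\delta) + 2 \tau^n.$$

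Il reste \`{a} it\'{e}rer cette r\'{e}currence en couplant judicieusement les param\`{e}tres $n, \delta, r$ (typiquement $n = \lfloor K \log(1/r) \rfloor$ et $\delta = r^{1+\eta}$ avec $\eta > 0$ petit) pour en d\'{e}duire la d\'{e}croissance $M(r) \leq C r^s$. L'obstacle principal se situe pr\'{e}cis\'{e}ment dans la mise en place propre de cette it\'{e}ration : il faut \'{e}viter la circularit\'{e} entre $M(r)$ et $M(2\delta)$ (d'o\`{u} le choix $\delta \ll r$), et v\'{e}rifier que le gain de contraction $e^{-cn}$ l'emporte \`{a} chaque \'{e}tape sur l'expansion en $1/\delta$. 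Un pr\'{e}alable n\'{e}cessaire est la non-atomicit\'{e} de $\nu$, qui d\'{e}coule de la Zariski-densit\'{e} de $\Gamma$ (sinon l'ensemble des atomes de masse maximale serait une orbite finie dans $B$, incompatible avec la Zariski-densit\'{e}). Une fois la majoration polynomiale \'{e}tablie, le principe de distribution de masse fournit directement $\HD(\nu) \geq s$.
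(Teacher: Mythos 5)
Your strategy (mass-distribution principle plus a multi-scale recursion bootstrapping $M(r)=\sup_\xi\nu(B(\xi,r))$ via stationarity and exponential contraction) is genuinely different from the paper's, which avoids any recursion and instead proves directly that $\sup_H \nu\{[x]:\delta([x],H)\leq\epsilon\}\leq C\epsilon^\alpha$ by converting $\delta(Z,H)$ into a ratio of matrix norms and invoking a large-deviation estimate from the cocycle lemma (Prop.~\ref{GDV}). However, your recursion contains a step that fails in general. When $g$ is $\epsilon$-contracting with attractive point $\xi_+(g)$ and repelling hyperplane $H_g$, and $\xi$ lies at distance at least $\delta$ from $\xi_+(g)$, the preimage $g^{-1}B(\xi,r)$ is \emph{not} contained in a small ball of radius $\sim r e^{-cn}/\delta$: it is contained only in a thin tubular neighbourhood of $H_g$ of thickness $\sim e^{-cn}/\delta$. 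Concretely, writing $g=\mathrm{diag}(a_1,\ldots,a_d)$ with $a_1\gg a_2\geq\cdots\geq a_d$, one finds $\delta(g\eta,[e_1])\leq \frac{a_2}{a_1}\,\delta(\eta,H_g)^{-1}$, so for $\xi$ away from $[e_1]$ the preimage lies in $\{\eta:\delta(\eta,H_g)\lesssim (a_2/a_1)/\delta\}$; this set has macroscopic diameter (the full projective hyperplane), and within it $g$ acts by $\mathrm{diag}(a_2,\ldots,a_d)$, which can spread a ball arbitrarily. Thus the first term on the right of your inequality is not $M(Cre^{-cn}/\delta)$ but $\sup_H\nu(H^{e^{-cn}/\delta})$, and the recursion as written does not close.

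This is exactly why the paper works with the hyperplane-neighbourhood quantity from the start: a Frostman bound $\sup_H\nu(H^\epsilon)\leq C\epsilon^\alpha$ dominates the ball quantity (any ball of radius $\epsilon$ around $p$ lies in the $\epsilon$-neighbourhood of any hyperplane through $p$), and it is precisely this hyperplane quantity that is stable under the dynamics. The paper then shows that bound in one shot: Theorem~\ref{convdirection} (exponential convergence in direction) lets one replace $\delta(Z,H)$ by $\delta(x_n\cdot e_i,H)$ for a suitable index $i$, and after writing the distance as a scalar product this becomes a large-deviation estimate for $\|x_n^t y\|/\|x_n\|$ handled by the $l=0$ version of the cocycle lemma (Lemma~\ref{cocycle2}). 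If you want to salvage the bootstrap route, you would have to run the recursion for $M'(\epsilon):=\sup_H\nu(H^\epsilon)$ and control how $g^{-1}$ transforms hyperplane neighbourhoods, which is considerably more delicate and would in any case reprove the large-deviation estimate the paper uses directly. The preliminary remark on non-atomicity, while true, is not needed: the quantitative bound subsumes it.
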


Dans la derni\`{e}re partie du texte (Section \ref{secping}),  nous combinons les r\'{e}sultats pr\'{e}c\'{e}dents pour donner
 une version probabiliste de
l'alternative de Tits \cite{tits}, \`{a} savoir: deux marches
al\'{e}atoires ind\'{e}pendantes sur deux sous-groupes $\Gamma_1$ et
$\Gamma_2$ Zariski denses dans $G$
 engendrent un sous-groupe libre avec une probabilit\'{e} tendant vers $1$ exponentiellement vite. Dans \cite{aoun}, ce r\'{e}sultat a \'{e}t\'{e} d\'{e}montr\'{e} pour $\Gamma_1=\Gamma_2$  de type fini, non virtuellement r\'{e}soluble et d'adh\'{e}rence de Zariski d\'{e}finie sur un corps quelconque.
 Nous profitons que nous travaillons dans le corps des r\'{e}els pour
all\'{e}ger les hypoth\`{e}ses "de type fini" et
\emph{$\Gamma_1=\Gamma_2$} et  pour donner des preuves plus
directes et simplifi\'{e}es. Le r\'{e}sultat obtenu est le suivant:
\begin{theorem} Soient $\Gamma_1$ et $\Gamma_2$ deux sous-groupes Zariski denses de $G$ munis chacun  d'une mesure de probabilit\'{e}  qu'on notera respectivement $\mu_1$ et $\mu_2$. On suppose qu'elles poss\`{e}dent un moment exponentiel et que leur support engendre respectivement $\Gamma_1$ et $\Gamma_2$. Alors il existe $\rho\in ]0,1[$ tel que pour tout entier $n$ assez grand:
$$\mu_1^n \otimes \mu_2^n\{(x,y)\in \Gamma_1\times \Gamma_2; \langle x, y \rangle \textrm{\;est libre} \} \geq 1 - \rho^n$$
\label{titsproba}\end{theorem}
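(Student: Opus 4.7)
Le plan consiste \`{a} appliquer un argument de ping-pong quantitatif de Tits sur la fronti\`{e}re de Furstenberg $B$, en combinant les trois r\'{e}sultats principaux des sections pr\'{e}c\'{e}dentes. Pour $g = k(g) a(g) u(g)$ dans la d\'{e}composition $KA^+K$, d\`{e}s que l'\'{e}cart $\alpha_1(g)/\alpha_2(g)$ est suffisamment grand, $g$ est fortement proximal sur $B$: il envoie le compl\'{e}mentaire d'un petit voisinage de l'ensemble r\'{e}pulsif, param\'{e}tr\'{e} par $Mu(g) \in B^*$, dans un petit voisinage du point attractif $k(g)M \in B$. Le lemme de ping-pong affirme alors que si $x$ et $y$ sont fortement proximaux et si leurs param\`{e}tres attractif/r\'{e}pulsif $\bigl(k(x)M, Mu(x), k(y)M, Mu(y)\bigr)$ sont deux \`{a} deux $\epsilon$-s\'{e}par\'{e}s (avec $\epsilon$ plus grand que le diam\`{e}tre des zones de contraction), alors $\langle x,y \rangle$ est libre. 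Il suffit donc d'\'{e}tablir que ces deux conditions sont v\'{e}rifi\'{e}es simultan\'{e}ment avec probabilit\'{e} au moins $1 - \rho^n$.

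Je commencerais par la proximalit\'{e} forte. Sous l'hypoth\`{e}se Zariski dense, un th\'{e}or\`{e}me classique de Goldsheid--Margulis et Guivarc'h assure la s\'{e}paration stricte des deux premiers exposants de Lyapunov de $\mu_i$. Combin\'{e} aux grandes d\'{e}viations --- ou, plus directement, \`{a} la preuve du Th\'{e}or\`{e}me \ref{convkak} qui repose pr\'{e}cis\'{e}ment sur les arguments de Goldsheid--Margulis --- ceci donne que sous $\mu_i^n$, la probabilit\'{e} que $\alpha_1(g)/\alpha_2(g) < e^{cn}$ est exponentiellement petite, pour une constante $c > 0$ ind\'{e}pendante de $n$. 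J'\'{e}tablirais ensuite la s\'{e}paration des quatre param\`{e}tres. Le Th\'{e}or\`{e}me \ref{indasymp} fournit la convergence exponentielle, au sens lipschitzien, de la loi jointe du quadruplet $\bigl(k(x)M, Mu(x), k(y)M, Mu(y)\bigr)$ sous $\mu_1^n \otimes \mu_2^n$ vers le produit $\nu_1 \otimes \nu_1^* \otimes \nu_2 \otimes \nu_2^*$. En approchant l'indicatrice de l'\'{e}v\'{e}nement ``$d \leq e^{-\eta n}$'' par une fonction lipschitzienne de constante $e^{\eta n}$, je majorerais la probabilit\'{e} de proximit\'{e} par la masse que le produit $\nu_1 \otimes \nu_1^* \otimes \nu_2 \otimes \nu_2^*$ donne \`{a} un $e^{-\eta n}$-voisinage, plus un reste de la forme $e^{\eta n} \rho^n$, n\'{e}gligeable pour $\eta$ petit. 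Le terme principal se contr\^{o}le gr\^{a}ce au Th\'{e}or\`{e}me \ref{hausdorff}: la positivit\'{e} de $\HD(\nu_i)$ (et analogue pour $\nu_i^*$) entra\^{\i}ne que ces mesures ne chargent pas les petits ensembles.

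Le principal obstacle se situe pr\'{e}cis\'{e}ment dans cette derni\`{e}re \'{e}tape: la positivit\'{e} de $\HD(\nu)$ n'implique pas imm\'{e}diatement une d\'{e}croissance uniforme $\nu(B(\xi, \epsilon)) \leq C \epsilon^\delta$, alors qu'une d\'{e}croissance au moins polynomiale est requise pour conclure avec un $\eta > 0$ fix\'{e}. Je pr\'{e}vois deux contournements: soit extraire de la preuve du Th\'{e}or\`{e}me \ref{hausdorff} (fond\'{e}e sur la convergence exponentielle de la marche al\'{e}atoire sur la fronti\`{e}re) une r\'{e}gularit\'{e} h\"{o}ld\'{e}rienne effective de $\nu$; soit proc\'{e}der par conditionnements successifs, en fixant d'abord trois des quatre param\`{e}tres et en exploitant le fait que, pour $\xi$ g\'{e}n\'{e}rique sous les mesures produit, la masse $\nu_i(B(\xi,\epsilon))$ d\'{e}cro\^{\i}t de mani\`{e}re contr\^{o}lable. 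Une fois cette estimation quantitative acquise, le choix $\eta$ petit devant $c$ et devant $-\log \rho$ conclut, avec un $\rho \in ]0,1[$ explicite en fonction des constantes pr\'{e}c\'{e}dentes.
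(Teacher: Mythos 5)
Your proposal matches the paper's strategy in all essentials: ping-pong on the Fursten\-berg boundary, with contraction of each walk obtained from the $KA^+K$ gap (Proposition~\ref{propratio}), separation of attractive and repulsive parameters via the asymptotic independence of Th\'{e}or\`{e}me~\ref{indasymp}, and a quantitative bound on the mass the stationary measure gives to small neighbourhoods. Your first proposed workaround for the ``$\HD(\nu)>0$ is not enough'' obstacle---extracting an effective H\"{o}lder regularity of $\nu$ from the proof---is exactly what the paper isolates as Th\'{e}or\`{e}me~\ref{hausdorff1}, namely $\nu\{\delta(\cdot,H)\leq\epsilon\}\leq C\epsilon^{\alpha}$ uniformly over \emph{hyperplanes} $H$, and it is precisely this uniformity over hyperplanes (the repulsive set being a kernel $\ker(f)$) that makes the proximality and general-position estimates (Propositions~\ref{proposition2} and~\ref{proposition3}) go through.
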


Signalons finalement que la Section \ref{secalg}  rassemble des r\'{e}sultats classiques sur la th\'{e}orie des groupes alg\'{e}briques semi-simples r\'{e}el. Ils permettant de ramener l'\'{e}tude sur la fronti\`{e}re \`{a} celle d'un nombre fini de repr\'{e}sentations
irr\'{e}ductibles proximales (i.e. contenant un \'{e}l\'{e}ment contractant l'espace projectif). Le lecteur non familier avec cette th\'{e}orie peut se contenter de traiter $G=\SL_d(\RR)$ car le cas g\'{e}n\'{e}ral se ram\`{e}ne dans une large mesure au cas de $\SL_d(\RR)$, gr\^{a}ce \`{a} la Proposition \ref{mostowcons}.

\paragraph{Remerciements:} Ce travail a \'{e}t\'{e} effectu\'{e} gr\^{a}ce au soutien de l'ERC 208091-GADA.
Je remercie l'universit\'{e} d'Orsay et plus sp\'{e}cialement Emmanuel Breuillard pour
des conditions de travail exceptionnelles. Ce texte vient \`{a} la
suite de plusieurs expos\'{e}s effectu\'{e}s dans le cadre du GDR
Platon et a \'{e}t\'{e} mis au point pour la conf\'{e}rence en
l'honneur d'Emile Le Page \`{a} l'\^{\i}le de Berder en septembre
2011. Je remercie sp\'{e}cialement Fran\c{c}oise Dalbo pour ces
exceptionnelles rencontres. Je remercie \'{e}galement Bertrand
Deroin, Emile Le Page, Sara Brofferio et Yves Guivarc'h pour leur
disponibilit\'{e} et de nombreuses discussions.
\section{Pr\'{e}liminaires alg\'{e}briques}\label{secalg}
Dans ce texte, un groupe  lin\'{e}aire alg\'{e}brique r\'{e}el $G$ d\'{e}signe un
sous-groupe de $\operatorname{GL}_d(\RR)$, pour un certain entier
$d\geq 2$, qui soit ferm\'{e} pour la topologie de Zariski\footnote{En d'autres termes, $G$ est le lieu d'annulation d'une famille finie de polynômes en $d^2$ variables et \`{a} coefficients réels.}. Un tel
groupe appara\^{i}t de fa\c{c}on naturelle  dans nos \'{e}nonc\'{e}s comme
l'adh\'{e}rence de Zariski\footnote{L'adh\'{e}rence de Zariski d'un sous-groupe
$\Gamma$ de $\GL_d(\RR)$ est le plus petit sous-groupe lin\'{e}aire
alg\'{e}brique r\'{e}el $G$ contenant $\Gamma$.} dans $\GL_d(\RR)$ d'un sous-groupe $\Gamma$. Le groupe $G$ est dit semi-simple s'il est Zariski connexe et n'admet
pas de sous-groupes normaux Zariski connexes ab\'{e}liens diff\'{e}rents de $\{1\}$.

Une repr\'{e}sentation rationnelle $\rho: G \longrightarrow \GL(V)$ de
$G$ d\'{e}signera un morphisme de groupes de $G$ dans $\GL(V)$, o\`{u}
$V$ est un espace vectoriel r\'{e}el, qui soit aussi un morphisme de
$\RR$-vari\'{e}t\'{e}s alg\'{e}briques.
\subsection{La d\'{e}composition $KA^+K$}\label{defi2}
Soit $G$ un groupe lin\'{e}aire alg\'{e}brique
r\'{e}el semi-simple. Quand $G$ est non compact, $G$ admet un tore maximal d\'{e}ploy\'{e} non trivial $A$, i.e. un sous-groupe alg\'{e}brique r\'{e}el ayant une repr\'{e}sentation rationnelle fid\`{e}le dans le groupe des matrices diagonales inversibles \`{a} coefficients réels. On note $X$ l'ensemble des caract\`{e}res rationnels de $A$ (c'est un $\ZZ$-module libre) et $E$ l'espace vectoriel r\'{e}el $X \otimes_{\ZZ} \RR$. On note $\Sigma$ l'ensemble des racines de $A$ dans $G$, i.e. les poids non triviaux de $A$ dans la repr\'{e}sentation adjointe de $G$. On peut montrer
 que $\Sigma$  est un syst\`{e}me de racines de $E$ \cite{bt}. On choisit alors un syst\`{e}me de racines positives $\Sigma^+$ et on note $A^+=\{a\in A; |\chi(a)|\geq 1 \;\forall \chi\in \Sigma^+\}$. On a alors:
\begin{theorem}\cite[Chap. 9, Th. 1.1]{helg}
Soit $G$ un groupe lin\'{e}aire alg\'{e}brique  r\'{e}el semi-simple et non
compact. Il existe un compact maximal $K$ de $G$ tel que $G=K
A^+K$. De plus, si $M$ est le centralisateur de $A$ dans $K$ et
$g=k(g)a(g)u(g)=k'(g)a(g)u'(g)$ sont deux d\'{e}compositions  de $g\in G=KA^+K$ avec $a(g)$ dans l'int\'{e}rieur de $A^+$,
alors il existe $m\in M$ tel que $k'(g)=k(g)m$ et
$u'(g)=m^{-1}u(g)$. \label{theokak}\end{theorem}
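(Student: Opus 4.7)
Le plan est de proc\'{e}der en trois \'{e}tapes. D'abord, on fixe une involution de Cartan $\theta$ de $\mathfrak{g}$: apr\`{e}s conjugaison alg\'{e}brique, on peut supposer que $G \subset \GL_d(\RR)$ est stable par transposition et prendre $\theta(X) = -X^T$, d'o\`{u} la d\'{e}composition $\mathfrak{g} = \mathfrak{k} \oplus \mathfrak{p}$ avec $\mathfrak{k} = \mathfrak{g} \cap \mathfrak{o}(d)$ et $\mathfrak{p}$ form\'{e} des \'{e}l\'{e}ments sym\'{e}triques de $\mathfrak{g}$. En posant $K = G \cap O(d)$, la d\'{e}composition polaire matricielle $g = k(g^Tg)^{1/2}$, combin\'{e}e \`{a} l'alg\'{e}bricit\'{e} et la $\Theta$-stabilit\'{e} de $G$ (o\`{u} $\Theta(g) = (g^T)^{-1}$ est l'involution de Cartan au niveau du groupe), donne $(g^Tg)^{1/2} \in \exp(\mathfrak{p})$; l'application $(k, X) \mapsto k\exp(X)$ est alors un diff\'{e}omorphisme de $K \times \mathfrak{p}$ sur $G$.

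Deuxi\`{e}mement, on \'{e}tablit le fait alg\'{e}brique clef: toute $\mathrm{Ad}(K)$-orbite dans $\mathfrak{p}$ rencontre $\overline{\mathfrak{a}^+}$. Pour $X \in \mathfrak{p}$ et un \'{e}l\'{e}ment r\'{e}gulier $X_0 \in \mathfrak{a}^+$ fix\'{e}, je choisirais $k \in K$ maximisant $\langle X, \mathrm{Ad}(k)X_0 \rangle$; la diff\'{e}rentiation au maximum force $\mathrm{Ad}(k)X_0$ \`{a} commuter \`{a} $X$ dans $\mathfrak{p}$, d'o\`{u}, par maximalit\'{e} de $\mathfrak{a}$ dans $\mathfrak{p}$, on d\'{e}duit $\mathrm{Ad}(k^{-1})X \in \mathfrak{a}$; l'action du groupe de Weyl (r\'{e}alis\'{e}e dans le normalisateur de $A$ dans $K$) permet ensuite de se placer dans $\overline{\mathfrak{a}^+}$. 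Combin\'{e} \`{a} l'\'{e}tape pr\'{e}c\'{e}dente, ceci donne $G = K\exp(\overline{\mathfrak{a}^+})K = KA^+K$.

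Pour l'unicit\'{e}, si $k_1 a u_1 = k_2 a u_2$ avec $a$ dans l'int\'{e}rieur de $A^+$, appliquer $\Theta$ (qui fixe $K$ point par point et envoie $a$ sur $a^{-1}$) donne $k_1 a^{-1} u_1 = k_2 a^{-1} u_2$; en multipliant cette derni\`{e}re \'{e}galit\'{e} par l'inverse de la premi\`{e}re, on obtient $k_1 a^2 k_1^{-1} = k_2 a^2 k_2^{-1}$, ce qui montre que $m := k_2^{-1}k_1 \in K$ centralise $a^2$. Comme $a$ est r\'{e}gulier dans $A^+$, le centralisateur de $a^2$ dans $G$ co\"{i}ncide avec celui de $A$ tout entier, c'est-\`{a}-dire $MA$; on en tire $m \in K \cap MA = M$, ce qui fournit l'ambigu\"{i}t\'{e} annonc\'{e}e.

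La principale difficult\'{e} r\'{e}side dans l'\'{e}tape de diagonalisation, \`{a} savoir le fait que toute $\mathrm{Ad}(K)$-orbite dans $\mathfrak{p}$ rencontre $\mathfrak{a}$. Dans le cas matriciel, ceci se ram\`{e}ne au th\'{e}or\`{e}me spectral pour les matrices sym\'{e}triques r\'{e}elles, mais dans le cas g\'{e}n\'{e}ral semi-simple il faut exploiter la structure du syst\`{e}me de racines $(\mathfrak{g}, \mathfrak{a})$ rappel\'{e}e avant l'\'{e}nonc\'{e}, ainsi que la d\'{e}composition de $\mathfrak{p}$ selon les espaces de poids, pour obtenir le raffinement au niveau des chambres de Weyl.
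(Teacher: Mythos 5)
Le texte ne d\'{e}montre pas ce th\'{e}or\`{e}me : il le cite directement de Helgason (Chap.\ IX, Th.\ 1.1), sans preuve. Il n'y a donc pas de preuve interne \`{a} l'article avec laquelle comparer votre esquisse. Votre argument reproduit cependant fid\`{e}lement la d\'{e}monstration classique de la r\'{e}f\'{e}rence cit\'{e}e : d\'{e}composition globale de Cartan $G=K\exp(\mathfrak p)$ (via la d\'{e}composition polaire et la $\Theta$-stabilit\'{e} alg\'{e}brique de $G$), diagonalisation de l'orbite $\mathrm{Ad}(K)\cdot X$ dans $\mathfrak a$ par maximisation de $\langle X,\mathrm{Ad}(k)X_0\rangle$ avec $X_0$ r\'{e}gulier, puis action du groupe de Weyl pour atteindre $\overline{\mathfrak a^+}$, et enfin l'unicit\'{e} par l'identit\'{e} $k a^2 k^{-1}=k' a^2 k'^{-1}$ et le calcul du centralisateur d'un \'{e}l\'{e}ment r\'{e}gulier. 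L'esquisse est correcte ; le seul pas l\'{e}g\`{e}rement elid\'{e} est le passage "$m$ centralise $a^2 \Rightarrow m\in M$" : il faut d'abord observer que $m$ normalise $\mathfrak a$ (puisque $\mathrm{Ad}(m)$ pr\'{e}serve $\mathfrak z_{\mathfrak g}(\log a^2)=\mathfrak m\oplus\mathfrak a$ ainsi que $\mathfrak p$, donc $\mathfrak a=\mathfrak p\cap(\mathfrak m\oplus\mathfrak a)$), puis que l'\'{e}l\'{e}ment de Weyl induit est trivial car il fixe le point r\'{e}gulier $a^2$ dans l'int\'{e}rieur de la chambre ; vous y faites allusion via "le centralisateur de $a^2$ co\"{\i}ncide avec $MA$" mais ce point m\'{e}riterait d'\^{e}tre justifi\'{e} dans une preuve compl\`{e}te.
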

\begin{exemple}
Pour $G=\SL_d(\RR)$, un compact maximal $K$  est le sous-groupe des matrices orthogonales.  La d\'{e}composition $G=KA^+K$ n'est autre que la d\'{e}composition polaire classique, i.e. la d\'{e}composition d'une matrice inversible en un produit d'une matrice orthogonale et d'une matrice sym\'{e}trique d\'{e}finie positive.\end{exemple}

La proposition classique suivante montre que si $(\rho, V)$ est une
repr\'{e}sentation rationnelle de $G$, alors la d\'{e}composition $KAK$ de
$\rho(G)$ mime celle de $\operatorname{SL}(V)$ d\'{e}crite dans
l'exemple pr\'{e}c\'{e}dent. Pour la simplicit\'{e} de l'expos\'{e}, nous omettons la preuve et renvoyons par exemple \`{a} \cite[\S 4.2]{aoun1}.
\begin{proposition}
Soit $G$ un groupe lin\'{e}aire alg\'{e}brique r\'{e}el semi-simple sans facteurs compacts et $(\rho,V)$
une repr\'{e}sentation irr\'{e}ductible   rationnelle de $G$. Il existe un produit scalaire $\langle \cdot ,  \cdot
\rangle$ sur $V$ et une base orthonorm\'{e}e $F$  telles que
$\rho(A^+)\subset \{diag\left(a_1,\cdots, a_{dim(V)}\right);
|a_1|\geq |a_i|\;\forall i \neq 1\}$ et $\rho(K) \subset \{g\in
\operatorname{SL}(V); g g^t = Id\}$. En particulier, pour tout
$g\in G$, si $a(g)$ est la composante dans $A^+$ de $g$ dans la
d\'{e}composition $KA^+K$ et $a_i\left(\rho(g)\right)$ la
$i^{\textrm{\`{e}me}}$ composante de la matrice diagonale
$\rho\left(a(g)\right)$ \'{e}crite dans la base $F$, on a:
\begin{equation}
{||\rho(g)||}= a_1\left(\rho(g)\right)\;\;\;\;;\;\;\;\, \frac{||\bigwedge^2 \rho(g)||}{||\rho(g)||^2}= \underset{j\neq 1}{\sup} \frac{a_j\left(\rho(g)\right)}{a_1\left(\rho(g)\right)}
\label{eqmostow}\end{equation}
\label{mostowcons}\end{proposition}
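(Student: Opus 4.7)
Mon plan est de construire un produit scalaire sur $V$ compatible \`{a} la fois avec le compact $K$ et le tore $A$, puis d'exploiter la th\'{e}orie du plus haut poids pour obtenir les formules voulues. En moyennant un produit scalaire quelconque sur $V$ par la mesure de Haar de $K$, on obtient un produit scalaire $K$-invariant, d'o\`{u} $\rho(K) \subset O(V)$. Il faut encore ajuster ce produit scalaire pour que de plus $\rho(A)$ soit contenu dans les matrices sym\'{e}triques d\'{e}finies positives. L'outil cl\'{e} est l'involution de Cartan $\theta$ de $G$, d'ensemble de points fixes $K$ et v\'{e}rifiant $\theta(a)=a^{-1}$ sur $A$ : on cherche un produit scalaire tel que $\rho(\theta(g)) = (\rho(g)^*)^{-1}$ pour tout $g \in G$. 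L'existence d'une telle structure hermitienne d\'{e}coule de la conjugaison des involutions de Cartan, l'involution $g \mapsto (g^*)^{-1}$ de $\GL(V)$ se restreignant en une involution de Cartan sur $\rho(G)$, conjugu\'{e}e par un \'{e}l\'{e}ment de $\rho(G)$ \`{a} $\rho \circ \theta$ ; ceci permet d'aligner les deux structures via un changement de base orthonorm\'{e}e appropri\'{e}.

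Muni d'un tel produit scalaire, $\rho(A)$ est une famille commutative d'op\'{e}rateurs sym\'{e}triques d\'{e}finis positifs, donc simultan\'{e}ment diagonalisable dans une base orthonorm\'{e}e $F = (f_1, \dots, f_{\dim V})$ de vecteurs de poids $\chi_1, \dots, \chi_{\dim V}$ pour $A$. Par irr\'{e}ductibilit\'{e} de $\rho$, la th\'{e}orie classique du plus haut poids restreint assure l'existence d'un poids $\chi_0$ dominant les autres, au sens o\`{u} $\chi_0 - \chi_i$ est combinaison lin\'{e}aire \`{a} coefficients r\'{e}els positifs d'\'{e}l\'{e}ments de $\Sigma^+$ ; par d\'{e}finition de $A^+$, il s'ensuit $\chi_0(a) \geq \chi_i(a) > 0$ pour tout $a \in A^+$ et tout $i$. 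On ordonne $F$ de sorte que $f_1$ soit de poids $\chi_0$.

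Il reste \`{a} \'{e}tablir les deux formules. Pour $g = kau \in KA^+K$, $\rho(k)$ et $\rho(u)$ sont orthogonales et pr\'{e}servent donc la norme op\'{e}rateur, de sorte que $||\rho(g)|| = ||\rho(a)|| = \max_i \chi_i(a) = \chi_0(a) = a_1(\rho(g))$. Pour la seconde, le produit scalaire induit sur $\bigwedge^2 V$ rend $\{f_i \wedge f_j\}_{i<j}$ orthonorm\'{e}e et $\bigwedge^2\rho(a)$ y est diagonale de valeurs propres $\chi_i(a)\chi_j(a)$ ; ainsi $||\bigwedge^2\rho(a)|| = \chi_0(a) \cdot \max_{j \neq 1} \chi_j(a) = a_1(\rho(g)) \cdot \sup_{j \neq 1} a_j(\rho(g))$, et la division par $||\rho(g)||^2 = a_1(\rho(g))^2$ donne la formule voulue.

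Le principal obstacle technique se situe dans la construction du produit scalaire simultan\'{e}ment $K$-invariant et rendant $\rho(A)$ sym\'{e}trique. Ce point est classique en th\'{e}orie des repr\'{e}sentations des groupes alg\'{e}briques semi-simples r\'{e}els et repose sur la compatibilit\'{e}, \`{a} conjugaison pr\`{e}s, entre la d\'{e}composition de Cartan de $G$ et une structure hermitienne admissible sur $V$, cf. \cite{aoun1}. Une fois ce point acquis, le reste de la preuve n'est que de l'alg\`{e}bre lin\'{e}aire \'{e}l\'{e}mentaire combin\'{e}e au fait standard que l'espace de plus haut poids restreint est unique.
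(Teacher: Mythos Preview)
The paper does not prove this proposition: it explicitly omits the proof and refers to \cite[\S 4.2]{aoun1}. Your argument follows the standard route for this classical result --- construct a $K$-invariant inner product compatible with the Cartan involution (so that $\rho(A)$ acts by positive self-adjoint operators), diagonalize in an orthonormal basis of weight vectors, and use the dominance of the highest restricted weight to order the eigenvalues --- and this is indeed the approach one finds in the cited reference. One small point: your sentence ``l'involution $g \mapsto (g^*)^{-1}$ de $\GL(V)$ se restreignant en une involution de Cartan sur $\rho(G)$'' hides a nontrivial step, namely that $\rho(G)$ is self-adjoint with respect to \emph{some} inner product (this is Mostow's theorem for semisimple groups); once that is granted, the conjugacy of Cartan involutions gives exactly what you want. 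Otherwise the proof is correct and complete.
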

\begin{remarque} Pour $G=\SL_d(\RR)$ et $\rho$ la représentation naturelle de $G$ sur $\RR^d$, il suffit de prendre la base
 et le produit scalaire canoniques. \end{remarque}
\subsection{Fronti\`{e}re de Furstenberg}
Dans cette section $G$ d\'{e}signe un groupe alg\'{e}brique r\'{e}el semi-simple sans facteurs compacts.
\begin{definition}(Fronti\`{e}re de Furstenberg)
Soit $P$ un sous-groupe parabolique\footnote{i.e. un sous-groupe alg\'{e}brique r\'{e}el de $G$ tel que l'espace homog\`{e}ne $G/P$ soit une vari\'{e}t\'{e} projective réelle.} minimal  de $G$. On appelle fronti\`{e}re de Furstenberg de $G$  l'espace homog\`{e}ne
$B=G/P$ pour les classes \`{a} gauche modulo $P$. De
m\^{e}me on note $B^*=P\backslash G$ l'ensemble quotient de $G$
par $P$ pour les classes \`{a}
droite. On peut montrer  que $B$ s'identifie \`{a} $K/M$ et $B^*$ \`{a} $M\backslash K$, $M$ \'{e}tant le centralisateur de $A$ dans $K$.
\label{deffront}\end{definition}
\begin{exemple}
Pour $G=\operatorname{SL_d(\RR)}$, le sous-groupe des matrices triangulaires sup\'{e}rieures est un parabolique minimal et $B$ est la vari\'{e}t\'{e}
des drapeaux complets.
\end{exemple}
Une notion  cruciale dans ce texte est celle de proximalit\'{e}:
\begin{definition}
Un \'{e}l\'{e}ment $g\in \operatorname{GL}_d(\RR)$ est dit proximal s'il admet une unique valeur propre de module maximal. Une repr\'{e}sentation lin\'{e}aire $(\rho, V)$ d'un groupe $G$ est dite proximale si $\rho(G)$ contient un \'{e}l\'{e}ment proximal.
\label{defproxx}\end{definition}

La remarque suivante et le Lemme \ref{distfront} expliquent
pourquoi l'\'{e}tude d'un groupe semi-simple sans facteurs compacts peut se ramener,
dans beaucoup de cas,  \`{a} celle d'une repr\'{e}sentation irr\'{e}ductible
proximale.
\begin{remarque}
Soit $H$ un sous-groupe  de $\GL_d(\RR)$. Il est clair que quand
$H$ est relativement compact,  aucune repr\'{e}sentation de $H$ n'est
proximale. R\'{e}ciproquement, si $H$ est semi-simple sans facteurs
compacts, toute repr\'{e}sentation irr\'{e}ductible $(\rho,V)$ de $H$ peut
\^{e}tre rendue proximale dans le sens suivant: il existe un entier
$r<d$ et une sous-repr\'{e}sentation irr\'{e}ductible proximale  de la repr\'{e}sentation
produit ext\'{e}rieur  $\bigwedge^r V$. En effet,
on montre \`{a} l'aide du lemme de Burnside que l'irr\'{e}ductibilit\'{e} de
$\rho$ et la non compacit\'{e} de $\rho(H)$ entraînent l'existence
d'un \'{e}l\'{e}ment $\gamma$ de $H$, tel que $\rho(\gamma)$ n'a pas
toutes les valeurs propres de m\^{e}me module.
\label{ani}\end{remarque}
\begin{lemme}(La fronti\`{e}re de Furstenberg comme une vari\'{e}t\'{e} projective)
 Il existe des
repr\'{e}sentations irr\'{e}ductibles rationnelles et proximales
$(\rho_i,V_i)_{i=1}^r$ de $G$ et des vecteurs $(v_{i})_{i=1}^r$
telles que les applications suivantes sont injectives:
$$ \begin{array}{ccc}
G/P & \overset{\pi_1}{\longrightarrow} &{\prod}_{i=1}^r{P(V_i)}\\
gP & \longmapsto & (g\cdot [v_{i}])_{i=1}^r\end{array} ~~~~~~\textrm{et}~~~~~~
\begin{array}{ccc}
P\backslash G& \overset{\pi_2}{\longrightarrow} &{\prod}_{i=1}^r{P(V_i^*)}\\
Pg & \longmapsto & (g^{-1}\cdot [v_{i}^*])_{i=1}^r\end{array}$$
Ici $[x]$ d\'{e}signe le projet\'{e} d'un vecteur $x\in
V\setminus\{0\}$ dans $V$.
\label{furstprojective}\end{lemme}
\noindent Nous r\'{e}f\'{e}rons par
exemple \`{a} \cite[\S II.2.6]{quint} o\`{u} de tels
plongements sont utilis\'{e}s.
\begin{exemple}
    Pour $G=\operatorname{SL}_d(\RR)$, les repr\'{e}sentations $V_i$ sont les repr\'{e}sentations produits ext\'{e}rieurs de la repr\'{e}sentation naturelle et les vecteurs $v_{\rho_i}$ sont les $e_1\wedge \cdots \wedge e_i$, o\`{u} $(e_1,\cdots, e_n)$ est la base canonique de $\RR^d$  \end{exemple}
\begin{definition}(Distance de Fubini-Study et m\'{e}trique sur la fronti\`{e}re)
\begin{enumerate}
\item  Soit $V$ un espace vectoriel de dimension finie et $||\cdot||$ une norme sur $V$.  La distance de Fubini-Study sur l'espace projectif $P(V)$ est d\'{e}finie par:
$$\delta([x],[y]) = \frac{||x \wedge y||}{||x||\;||y||}$$
\item Soit $G$ un groupe lin\'{e}aire
alg\'{e}brique r\'{e}el semi-simple, $B$ sa fronti\`{e}re de Furstenberg et $B
\hookrightarrow {\prod}_{i=1}^r{P(V_i)}$ le plongement donn\'{e} par
le Lemme \ref{furstprojective}. Sur chaque espace projectif
$P(V_i)$, on consid\`{e}re la distance de Fubini-Study $\delta_i$ pour
la norme sur $V_i$ induite par le produit scalaire $K$-invariant
de la Proposition \ref{mostowcons}.  La m\'{e}trique choisie sur $B$
est celle induite par la distance $\max\{\delta_i, i=1,\cdots,
r\}$. On d\'{e}finit de fa\c{c}on  analogue une m\'{e}trique sur $B^*$.
\end{enumerate}\label{distfront} \end{definition}
\section{Marches al\'{e}atoires sur les groupes Zariski denses des groupes lin\'{e}aires alg\'{e}briques semi-simples}\label{secgeneral}
Cette partie contient les piliers de la th\'{e}orie des produits de matrices al\'{e}atoires.  Dans la premi\`{e}re section,  nous exposons deux th\'{e}or\`{e}mes fondamentaux dus \`{a} Guivarc'h et Raugi dans leur travaux des ann\'{e}es 1980-1990. Il s'agit de l'unicit\'{e} de la mesure stationnaire sur la fronti\`{e}re (Th\'{e}or\`{e}me \ref{unicite}) et la s\'{e}paration des exposants de Lyapunov (Th\'{e}or\`{e}me \ref{interieur}). Dans la deuxi\`{e}me, nous montrons comment l'utilisation des cocycles dans le monde non commutatif permet d'obtenir des vitesses exponentielles, en particulier le Th\'{e}or\'{e}me \ref{convdirection}. Ce dernier montre que la marche al\'{e}atoire converge presque s\^{u}rement dans la fronti\`{e}re de Furstenberg avec vitesse exponentielle. Commen\c{c}ons par des d\'{e}finitions usuelles:

\begin{definition}(Marches al\'{e}atoires et mesures stationnaires)\\
Soit $\Gamma$ un groupe et $\mu$ une mesure de probabilit\'{e} sur $\Gamma$.
\begin{enumerate}
\item  Une mesure de probabilit\'{e} $\mu$ sur $\Gamma$ est dite adapt\'{e}e si $\Gamma$ est le groupe engendr\'{e} par son support.
\item On consid\`{e}re sur un m\^{e}me espace probabilis\'{e} $(\Omega, \Ff, \PP)$ une suite $(g_i)_{i\in \NN^*}$ de variables al\'{e}atoires ind\'{e}pendantes et de m\^{e}me loi $\mu$ et on note $$x_n=g_1\cdots g_n$$ la marche al\'{e}atoire \`{a} droite et $y_n=g_n\cdots g_1$ celle \`{a} gauche. La loi de $x_n$ est $\mu^n$, la convol\'{e}e $n^{\textrm{\`{e}me}}$ de $\mu$. Le symbole $\EE$ d\'{e}signera l'esp\'{e}rance par rapport \`{a} la probabilit\'{e} $\PP$.
   \item Si $\Gamma$ agit sur un espace $X$, on dit qu'une mesure de probabilit\'{e}  $\nu$ sur $X$ est  $\mu$-invariante ou stationnaire si pour toute fonction r\'{e}elle mesurable $f$ d\'{e}finie sur $X$, on a l'\'{e}galit\'{e} $\iint_{E \times X}{f(g\cdot x) \,d\mu(g) d\nu(x)}=\int_{X}{f(x)\,d\nu(x)}$.\end{enumerate}
\label{defgen}\end{definition}

\subsection{Les travaux de Guivarc'h et Raugi }
Pour \'{e}tudier les marches al\'{e}atoires sur un sous-groupe $\Gamma$ de $\operatorname{GL}_d(\RR)$, Guivarc'h et Raugi supposaient que $\Gamma$ est fortement irr\'{e}ductible\footnote{i.e. tout sous-groupe d'indice fini de $\Gamma$ est irr\'{e}ductible. Quand l'adh\'{e}rence de Zariski de $\Gamma$ est Zariski connexe, les notions d'irr\'{e}ductibilit\'{e} et de forte irr\'{e}ductibilit\'{e} co\"{\i}ncident.} et proximal (voir la D\'{e}finition \ref{defproxx}).  Ces conditions sont g\'{e}n\'{e}ralement abr\'{e}g\'{e}es par $i-p$. La condition $i-p$ est g\'{e}n\'{e}rique dans le sens o\`{u} l'ensemble des mesures de probabilit\'{e} sur $\operatorname{GL}_d(\RR)$ dont le support engendre un sous-groupe satisfaisant les conditions $i-p$  est un ouvert dense pour la topologie de la convergence \'{e}troite des mesures de probabilit\'{e}. \\

En 1985, Goldsheid et Margulis d\'{e}montrent le th\'{e}or\`{e}me suivant qui r\'{e}duit la v\'{e}rification des propri\'{e}t\'{e}s $i-p$ d'un sous-groupe $\Gamma$ de $\operatorname{GL}_d(\RR)$ \`{a} celle de son adh\'{e}rence de Zariski $G$ dans $\GL_d(\RR)$.  Notons que ce th\'{e}or\`{e}me a \'{e}t\'{e} red\'{e}montr\'{e} par Benoist-Labourie \cite{benlab} et Prasad  \cite{prasad}.
\begin{theorem}\cite[Th\'{e}or\`{e}me 6.3]{Margulis} Soit $\Gamma$ un sous-groupe de $\operatorname{GL}_d(\RR)$ qui agit de fa\c{c}on irr\'{e}ductible.
Alors $\Gamma$ est $i-p$ si et seulement si son adh\'{e}rence de Zariski $G$ dans $\GL_d(\RR)$ est $i-p$.\label{goldmarg}\end{theorem}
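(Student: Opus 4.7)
L'implication $(\Leftarrow)$ est directe : si $\Gamma$ est $i-p$, tout \'{e}l\'{e}ment proximal de $\Gamma$ est dans $G$, d'o\`{u} la proximalit\'{e} de $G$. Pour la forte irr\'{e}ductibilit\'{e} de $G$, soit $G'\leq G$ d'indice fini ; \'{e}tant Zariski ferm\'{e} d'indice fini, $G'$ contient la composante Zariski connexe $G^0$. Comme $\Gamma/(\Gamma\cap G^0)\hookrightarrow G/G^0$ est fini, $\Gamma\cap G^0$ est d'indice fini dans $\Gamma$, donc agit irr\'{e}ductiblement par forte irr\'{e}ductibilit\'{e} de $\Gamma$ ; son sur-groupe $G'$ aussi.

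Pour $(\Rightarrow)$, supposons $G$ $i-p$. Je r\'{e}duis d'abord au cas Zariski connexe : en \'{e}crivant $\Gamma=\bigsqcup_i \gamma_i(\Gamma\cap G^0)$ et en prenant les adh\'{e}rences de Zariski, le sous-groupe $H:=\overline{\Gamma\cap G^0}^Z$ est d'indice fini dans $G$ (car $G\subset \bigcup_i \gamma_i H$) ; Zariski ferm\'{e} d'indice fini, il contient $G^0$ ; inclus dans $G^0$, il lui est \'{e}gal. Ainsi $\Gamma\cap G^0$ est Zariski dense dans $G^0$, et il suffit de traiter le couple $(\Gamma\cap G^0,G^0)$, les propri\'{e}t\'{e}s $i-p$ passant au sur-groupe $\Gamma$. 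Sous l'hypoth\`{e}se que $G$ est Zariski connexe, la forte irr\'{e}ductibilit\'{e} de $\Gamma$ est alors imm\'{e}diate : l'adh\'{e}rence de Zariski d'un sous-groupe d'indice fini $\Gamma'\leq\Gamma$ est d'indice fini dans $G$, donc \'{e}gale \`{a} $G$ par connexit\'{e}, donc $\Gamma'$ est Zariski dense dans $G$ et h\'{e}rite de son irr\'{e}ductibilit\'{e}.

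Reste \`{a} produire un \'{e}l\'{e}ment proximal dans $\Gamma$. La strat\'{e}gie est d'invoquer le th\'{e}or\`{e}me de Prasad \cite{prasad} (aussi obtenu dans \cite{benlab}) : tout sous-groupe Zariski dense d'un groupe alg\'{e}brique r\'{e}el Zariski connexe r\'{e}ductif contient un \'{e}l\'{e}ment semi-simple $\RR$-r\'{e}gulier, c'est-\`{a}-dire un \'{e}l\'{e}ment $\gamma$ dont la partie hyperbolique $\gamma_h$ appartient au lieu r\'{e}gulier d'un tore $\RR$-d\'{e}ploy\'{e} maximal $A$. L'irr\'{e}ductibilit\'{e} de $\Gamma$ assure que $G$ est r\'{e}ductif, et Prasad fournit $\gamma\in\Gamma$ $\RR$-r\'{e}gulier. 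La proximalit\'{e} de la $G$-repr\'{e}sentation irr\'{e}ductible $V$ donne l'existence d'un unique plus haut poids $\chi_0$ de $A$, de multiplicit\'{e} un ; la $\RR$-r\'{e}gularit\'{e} de $\gamma$ s\'{e}pare strictement les valeurs $|\chi(\gamma_h)|$ lorsque $\chi$ parcourt les poids de $V$, donc $|\chi_0(\gamma_h)|$ est un maximum strict, faisant de $\gamma_h$ (et donc de $\gamma$) un \'{e}l\'{e}ment proximal. L'obstacle principal est pr\'{e}cis\'{e}ment cette extraction d'un \'{e}l\'{e}ment $\RR$-r\'{e}gulier : \^{e}tre proximal ou $\RR$-r\'{e}gulier est une condition r\'{e}elle analytique ouverte, mais non Zariski ouverte, de sorte que la seule densit\'{e} Zariski ne suffit pas directement ; c'est l'analyse fine de Prasad sur les coefficients du polyn\^{o}me caract\'{e}ristique (ou les projections de Cartan \`{a} la Benoist-Labourie) qui permet de la contourner.
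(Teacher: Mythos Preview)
The paper does not give its own proof of this statement: it is quoted from Goldsheid--Margulis with the remark that it was later reproved by Benoist--Labourie \cite{benlab} and Prasad \cite{prasad}. Your argument is correct (apart from a harmless swap of the labels $(\Leftarrow)$ and $(\Rightarrow)$) and follows precisely the Prasad/Benoist--Labourie route that the paper itself flags as an alternative: reduce to the Zariski-connected case, use irreducibility to force $G$ reductive, and invoke the existence of an $\RR$-regular element in any Zariski-dense subgroup to produce a proximal element in $\Gamma$.

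It is perhaps worth noting how this differs from the original Goldsheid--Margulis proof. Their argument does not pass through the structure theory of reductive groups or $\RR$-regular elements; instead it is a more hands-on construction of proximal elements by a limiting/approximation procedure exploiting Zariski density directly (building contracting sequences in $\Gamma$ that mimic the dynamics of a proximal element of $G$). Your route via Prasad is cleaner once one accepts that black box, but it imports more structure theory; the Goldsheid--Margulis approach is more self-contained and closer in spirit to the dynamical methods used elsewhere in the paper (e.g.\ the proofs of Th\'eor\`emes \ref{convdirection} and \ref{convkak}).

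One very minor point: in your easy direction you assert that a finite-index subgroup $G'\le G$ is automatically Zariski closed. This is not literally true for arbitrary abstract subgroups, but the argument is easily repaired by noting that $\overline{G'}^Z$ still has finite index and hence contains $G^0$, which is what you actually use.
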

\begin{remarque} Le th\'{e}or\`{e}me pr\'{e}c\'{e}dent n'est plus vrai si l'on travaille dans un corps local diff\'{e}rent de $\RR$.
Nous r\'{e}f\'{e}rons \`{a} \cite{modern} pour une \'{e}tude pr\'{e}cise de la
condition $i-p$ dans ce cas et la notion de semi-groupes "larges".
\end{remarque} \noindent  Si l'adh\'{e}rence de Zariski $G$ de
$\Gamma$ est $i-p$, alors $G$ est forc\'{e}ment r\'{e}ductif et son
commutateur   est semi-simple et sans facteurs compacts  (voir
\cite[Lemme 2.7]{guivu}). Dans notre texte, nous imposons, pour
simplifier, la condition plus restrictive que $G$ est semi-simple
et sans facteurs compacts. La remarque \ref{ani}  et le Lemme
\ref{furstprojective} permettent alors de ramener l'\'{e}tude  de la
fronti\`{e}re de Furstenberg de $G$ \`{a} celle d'un nombre fini de
repr\'{e}sentations irr\'{e}ductibles proximales.   Ainsi, nous pouvons
formuler les \'{e}nonc\'{e}s des travaux de Guivarc'h et Raugi de la fa\c{c}on
suivante:
\begin{theorem}\cite[Th\'{e}or\`{e}me 2.6]{guivarch}(Unicit\'{e} de la mesure stationnaire)\\
Soit $\Gamma$ un sous-groupe Zariski dense d'un groupe lin\'{e}aire
alg\'{e}brique r\'{e}el $G$, suppos\'{e} semi-simple et sans facteurs
compacts. On note $B$ sa fronti\`{e}re de Furstenberg (D\'{e}finition
\ref{deffront}).
 Alors pour toute mesure de probabilit\'{e} $\mu$ adapt\'{e}e sur $\Gamma$ (voir la Définition \ref{defgen}), il existe une unique mesure stationnaire $\nu$ sur la fronti\`{e}re $B$. De plus, il existe une variable al\'{e}atoire $Z\in B$ de loi $\nu$ telle que, presque s\^{u}rement, la marche al\'{e}atoire \`{a} droite $x_n$ v\'{e}rifie:
\begin{equation}x_n \nu \overset{\textrm{\'{e}troite}}{\underset{n\rightarrow +\infty}{\longrightarrow}}  \delta_{Z} \label{11}\end{equation}
En fait,  si $(\rho,V)$ est une repr\'{e}sentation irr\'{e}ductible proximale de $G$, alors il existe une unique mesure de probabilit\'{e} $\mu$-invariante $\nu$ sur l'espace projectif  $P(V)$. Une propri\'{e}t\'{e} similaire \`{a} (\ref{11}) est valable.
\label{unicite}\end{theorem}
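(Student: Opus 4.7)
La preuve se d\'{e}compose en quatre \'{e}tapes. D'abord, gr\^{a}ce au Lemme \ref{furstprojective}, on plonge $B$ de fa\c{c}on $G$-\'{e}quivariante dans un produit fini ${\prod}_{i=1}^r P(V_i)$, o\`{u} chaque $(\rho_i,V_i)$ est une repr\'{e}sentation irr\'{e}ductible proximale de $G$; le Th\'{e}or\`{e}me \ref{goldmarg} garantit alors que chaque $\rho_i(\Gamma)$ v\'{e}rifie \'{e}galement les propri\'{e}t\'{e}s $i-p$. Puisque toute mesure stationnaire sur $B$ se projette en une mesure stationnaire sur chaque $P(V_i)$ et que le plongement est injectif, l'unicit\'{e} et la convergence (\ref{11}) sur $B$ se d\'{e}duisent des \'{e}nonc\'{e}s analogues sur chacun des $P(V_i)$. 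Il suffit donc de traiter le cas d'une seule repr\'{e}sentation irr\'{e}ductible proximale $(\rho,V)$ et de $P(V)$.

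L'existence d'une mesure stationnaire $\nu$ sur le compact $P(V)$ r\'{e}sulte du th\'{e}or\`{e}me de point fixe de Markov-Kakutani (ou d'un simple argument de Ces\`{a}ro). Pour une telle $\nu$ fix\'{e}e, on consid\`{e}re la suite al\'{e}atoire de mesures $\nu_n := x_n \nu$. Pour toute fonction continue $f$ sur $P(V)$, la stationnarit\'{e} donne $\EE[\langle f,\nu_{n+1}\rangle \mid g_1,\dots,g_n] = \langle f, x_n(\mu*\nu)\rangle = \langle f,\nu_n\rangle$, de sorte que $\langle f,\nu_n\rangle$ est une martingale born\'{e}e. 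Par convergence presque s\^{u}re des martingales born\'{e}es, appliqu\'{e}e \`{a} une famille d\'{e}nombrable dense de fonctions continues, on obtient l'existence d'une mesure al\'{e}atoire $\nu_\infty$ telle que $\nu_n \to \nu_\infty$ \'{e}troitement presque s\^{u}rement.

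La partie cl\'{e} consiste \`{a} identifier $\nu_\infty$ \`{a} une masse de Dirac. \'{E}crivons $\rho(x_n) = k_n a_n u_n$ dans la d\'{e}composition $KA^+K$ de la Proposition \ref{mostowcons}, avec $a_n = \operatorname{diag}(a_{n,1},\ldots,a_{n,d})$. Les hypoth\`{e}ses $i-p$ impliquent, par un argument classique de Furstenberg, que $\nu$ est \emph{propre}, i.e. donne une masse nulle \`{a} tout sous-espace projectif propre de $P(V)$; elles entra\^{\i}nent \'{e}galement le trou spectral $a_{n,1}/a_{n,2}\to+\infty$ presque s\^{u}rement, ce qui est le contenu du Th\'{e}or\`{e}me \ref{interieur} \`{a} venir. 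Quitte \`{a} extraire, on peut supposer $k_n\to k_\infty$ et $u_n\to u_\infty$ dans le compact $K$. Alors pour tout $[v]\in P(V)$ tel que $u_\infty\cdot [v]$ \'{e}vite l'hyperplan $H^- := \operatorname{Vect}(e_2,\ldots,e_d)$, on a $\rho(x_n)\cdot [v]\to k_\infty\cdot [e_1]$. Puisque $\nu(u_\infty^{-1}\cdot H^-)=0$ par propret\'{e}, le th\'{e}or\`{e}me de convergence domin\'{e}e donne $x_n\nu\to \delta_Z$ avec $Z := k_\infty\cdot [e_1]$; l'unicit\'{e} de la limite $\nu_\infty$ montre que la valeur de $Z$ ne d\'{e}pend pas de la sous-suite extraite.

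L'unicit\'{e} en d\'{e}coule imm\'{e}diatement: la direction limite $Z$ est fonction de la seule trajectoire $(g_n)_{n\geq 1}$ et non de la mesure stationnaire de d\'{e}part $\nu$. Si $\nu$ et $\nu'$ sont deux mesures stationnaires, elles admettent presque s\^{u}rement la m\^{e}me limite $\delta_Z$; prenant l'esp\'{e}rance et utilisant $\EE[x_n\nu]=\nu$, $\EE[x_n\nu']=\nu'$ combin\'{e}s au th\'{e}or\`{e}me de convergence domin\'{e}e, on obtient $\nu = \EE[\delta_Z] = \nu'$, et au passage la loi de $Z$ est bien $\nu$. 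Le principal obstacle de cette strat\'{e}gie est le lemme de propret\'{e} de $\nu$, qui utilise toute la force de la forte irr\'{e}ductibilit\'{e}, et pr\'{e}suppose l'\'{e}tablissement du trou spectral entre les deux premiers exposants de Lyapunov --- r\'{e}sultat qui sera pr\'{e}cis\'{e}ment l'objet du Th\'{e}or\`{e}me \ref{interieur}.
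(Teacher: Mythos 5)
Ce th\'{e}or\`{e}me est cit\'{e} dans le texte sans d\'{e}monstration (il provient de Guivarc'h--Raugi \cite{guivarch}), de sorte qu'il n'existe pas de preuve propre au papier \`{a} laquelle comparer votre tentative. Votre strat\'{e}gie --- plongement \'{e}quivariant dans un produit $\prod_{i=1}^r P(V_i)$ de repr\'{e}sentations $i$-$p$, existence par Markov--Kakutani, convergence de la martingale $x_n\nu$ vers une mesure al\'{e}atoire $\nu_\infty$, identification de $\nu_\infty$ \`{a} une masse de Dirac via la propret\'{e} de $\nu$ et la contraction dans la d\'{e}composition $KA^+K$, puis unicit\'{e} par $\nu=\EE[\delta_Z]$ --- est bien la route classique de Furstenberg et Guivarc'h--Raugi, et son architecture est correcte.

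Il reste toutefois une lacune relativement \`{a} l'\'{e}nonc\'{e}. Le Th\'{e}or\`{e}me \ref{unicite} ne requiert que l'adaptation de $\mu$, sans hypoth\`{e}se de moment, alors que vous invoquez le Th\'{e}or\`{e}me \ref{interieur} pour justifier $a_{n,1}/a_{n,2}\to+\infty$ presque s\^{u}rement; ce dernier suppose un moment d'ordre $1$, et le passage de $\lambda_1>\lambda_2$ \`{a} la contraction trajectorielle exige lui-m\^{e}me le th\'{e}or\`{e}me ergodique sous-additif, donc l'int\'{e}grabilit\'{e} de $\log\|g\|$. La preuve de Guivarc'h--Raugi contourne cet \'{e}cueil en \'{e}tablissant \emph{directement}, \`{a} partir de la forte irr\'{e}ductibilit\'{e} et de la proximalit\'{e}, que toute valeur d'adh\'{e}rence de $x_n/\|x_n\|$ dans $\operatorname{End}(V)$ est presque s\^{u}rement de rang $1$ --- un argument topologique sur le semi-groupe $T_\mu$ qui ne fait intervenir ni exposants de Lyapunov ni moments. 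Notez d'ailleurs qu'une fois acquise la convergence martingale $x_n\nu\to\nu_\infty$, la contraction le long d'une seule sous-suite suffirait; mais l'obtenir requiert de toute fa\c{c}on un argument ind\'{e}pendant du Th\'{e}or\`{e}me \ref{interieur}. Sous l'hypoth\`{e}se de moment exponentiel utilis\'{e}e partout ailleurs dans le papier votre preuve est valide, mais telle quelle elle ne couvre pas l'\'{e}nonc\'{e} g\'{e}n\'{e}ral.
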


\begin{remarque}
\begin{enumerate}
\item De fa\c{c}on similaire, en notant $\mu^{-1}$ la loi de $g_1^{-1}$, il existe une unique mesure de probabilit\'{e} $\mu^{-1}$-invariante $\nu^*$ sur $B^*=P \backslash G$. Le Lemme \ref{furstprojective} justifie l'emploi de $\mu^{-1}$.
\item Si $\mu$ poss\`{e}de  un moment exponentiel (Definition \ref{defimoment}), alors la mesure de probabilit\'{e} $\nu$ peut \^{e}tre remplac\'{e}e  par la mesure de Dirac $\delta_{x}$ pour tout $x\in B$ dans (\ref{11}). En d'autres termes, $x_n \cdot x$ converge presque s\^{u}rement vers $Z$ \cite{bougerol}. Nous montrons dans le Th\'{e}or\`{e}me \ref{convdirection} que la vitesse de convergence est exponentielle.
\item Dans le m\^{e}me article, les auteurs montrent que la composante gauche de $x_n$ suivant $K$  dans la d\'{e}composition $KA^+K$ de $G$ converge presque s\^{u}rement vers $Z$, une fois projet\'{e}e sur la fronti\`{e}re. Nous montrons dans le Th\'{e}or\`{e}me \ref{convkak} que cette convergence est exponentielle.\end{enumerate}
\end{remarque}
\begin{definition}
Soit $\mu$ une mesure de probabilit\'{e} sur $\operatorname{SL}_d(\RR)$ et $||\cdot||$ une norme sur $\RR^d$.
On dit que $\mu$ a un moment d'ordre $1$ (resp. d'ordre exponentiel) s'il existe $\tau>0$ tel que $\int{\log {||g||}\,d\mu(g)}< \infty$ (resp. $\int{||g||^\tau\,d\mu(g)}< \infty$).
\label{defimoment}\end{definition}
\begin{remarque} Il est facile de montrer (voir \cite[Lemme 4.25]{aoun}) que si $G$ est un sous-groupe de $\operatorname{SL}_d(\RR)$ et $\rho: G \longrightarrow \operatorname{SL}_{d'}(\RR)$ est une repr\'{e}sentation de $G$, alors pour toute mesure de probabilit\'{e} $\mu$ sur $G$ ayant un moment d'ordre $1$ (resp. d'ordre exponentiel), la mesure de probabilit\'{e} image $\rho(\mu)$ poss\`{e}de la m\^{e}me propri\'{e}t\'{e}. \end{remarque}
\begin{propdef}(Exposants de Lyapunov)]
Soit $\mu$ une mesure de probabilit\'{e} sur $\operatorname{SL}_d(\RR)$ ayant un moment d'ordre $1$ et $x_n$ la marche al\'{e}atoire \`{a} droite associ\'{e}e (D\'{e}finition \ref{defgen}). On d\'{e}finit les exposants de Lyapunov de $\mu$ par les limites presque s\^{u}res suivantes: $\lambda_i=\underset{n\rightarrow \infty}{\lim} \frac{1}{n}  \log|| \bigwedge^i x_n||$, $i=1,\cdots, d$. L'existence\footnote{Il est clair par le lemme sous-additif classique que $\lambda_i= \underset{n\rightarrow \infty}{\lim}\frac{1}{n}\EE ( \log|| \bigwedge^i x_n||)$.} de ces limites peut \^{e}tre d\'{e}montr\'{e}e par le th\'{e}or\`{e}me ergodique sous-multiplicatif de Kingman \cite{kingman}. Quand $\rho$ est une repr\'{e}sentation lin\'{e}aire de $G$, les exposants de Lyapunov de la mesure de probabilité image $\rho(\mu)$ sont not\'{e}s $\lambda_i(\rho)$. \label{defprop}\end{propdef}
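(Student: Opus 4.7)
Mon plan est d'appliquer le th\'{e}or\`{e}me ergodique sous-multiplicatif de Kingman \`{a} la suite $a_n^{(i)} := \log\|\bigwedge^i x_n\|$, pour chaque $i\in\{1,\cdots,d\}$ fix\'{e}. Deux points sont \`{a} v\'{e}rifier : la sous-additivit\'{e} stationnaire du processus associ\'{e}, et une condition d'int\'{e}grabilit\'{e}.

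\textbf{Sous-additivit\'{e}, stationnarit\'{e} et int\'{e}grabilit\'{e}.} Puisque $\bigwedge^i$ est un morphisme de groupes et que la norme d'op\'{e}rateur est sous-multiplicative, on a pour tous $0\leq s<u<t$ :
$$\log\|\bigwedge^i(g_{s+1}\cdots g_t)\| \;\leq\; \log\|\bigwedge^i(g_{s+1}\cdots g_u)\| + \log\|\bigwedge^i(g_{u+1}\cdots g_t)\|.$$
Les $g_n$ \'{e}tant i.i.d.\ de loi $\mu$, ce processus indic\'{e} par les paires $(s,t)$ est stationnaire sous le d\'{e}calage et ergodique. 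Pour l'int\'{e}grabilit\'{e}, j'utiliserais que tout $g\in\SL_d(\RR)$ a ses valeurs singuli\`{e}res de produit $1$, ce qui entra\^{\i}ne $\|g\|\geq 1$, puis $\|\bigwedge^i g\|\geq 1$, et donc $a_n^{(i)}\geq 0$. L'in\'{e}galit\'{e} classique $\|\bigwedge^i g\|\leq \|g\|^i$ combin\'{e}e \`{a} l'hypoth\`{e}se de moment d'ordre $1$ donne $\E(a_1^{(i)})\leq i\,\E(\log\|g_1\|)<\infty$, d'o\`{u} la condition $\E(a_1^{(i),+})<\infty$ requise par Kingman.

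\textbf{Conclusion et obstacle principal.} Les hypoth\`{e}ses de Kingman \'{e}tant r\'{e}unies, la limite $\lambda_i := \lim_n a_n^{(i)}/n$ existe presque s\^{u}rement, est d\'{e}terministe (par la loi du $0$-$1$ sur les \'{e}v\'{e}nements asymptotiques des $g_n$ ind\'{e}pendants), co\"{\i}ncide avec $\inf_n \E(a_n^{(i)})/n$ par le lemme de Fekete, et appartient \`{a} $[0,+\infty[$. L'argument se r\'{e}duit ainsi \`{a} une invocation du th\'{e}or\`{e}me de Kingman, donc sans difficult\'{e} majeure ; la seule subtilit\'{e} digne d'\^{e}tre soulign\'{e}e est la positivit\'{e} automatique de $a_n^{(i)}$ dans $\SL_d(\RR)$, qui dispense de contr\^{o}ler une partie n\'{e}gative via un moment sur $\log\|g^{-1}\|$, comme il le faudrait dans un cadre plus g\'{e}n\'{e}ral.
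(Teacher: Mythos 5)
Your proof is correct and follows the same route the paper merely sketches: apply Kingman's subadditive ergodic theorem, to $f_n=\log\|\bigwedge^i x_n\|$, after checking subadditivity (from $\bigwedge^i(AB)=\bigwedge^iA\,\bigwedge^iB$ and submultiplicativity of the operator norm), stationarity (the $g_j$ are i.i.d.), and integrability. The observation that in $\SL_d(\RR)$ the singular values $\sigma_1\geq\cdots\geq\sigma_d$ satisfy $\sigma_1\cdots\sigma_d=1$, hence $\|\bigwedge^i g\|=\sigma_1\cdots\sigma_i\geq 1$, so $f_n\geq 0$, is exactly what guarantees both the $\E(f_1^+)<\infty$ hypothesis (together with $\|\bigwedge^i g\|\leq\|g\|^i$ and the moment-$1$ assumption) and the condition $\inf_n\E(f_n)/n>-\infty$, ruling out $-\infty$ limits; this is a point the paper leaves implicit, and you are right that it spares one from imposing a moment on $\log\|g^{-1}\|$, as one would need in $\operatorname{GL}_d$.
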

\begin{remarque}[Loi des grands nombres] L'existence des limites  ci-dessus peut \^{e}tre vue comme une loi des grands nombres. En effet, soient $K=\operatorname{SO}_d(\RR)$ et $d$ la m\'{e}trique $K$-invariante sur l'espace sym\'{e}trique $\operatorname{SL}_d(\RR)/K$ d\'{e}finie par $d(gK,g'K)=\log ||g'^{-1} g||$. Alors $\lambda_1=\underset{n\rightarrow \infty}{\lim} \frac{d(x_nK,K)}{n}$. Les \'{e}quivalents des th\'{e}or\`{e}mes de probabilit\'{e}s classiques comme le th\'{e}or\`{e}me central limite, les grandes d\'{e}viations, le th\'{e}or\`{e}me limite local ont \'{e}t\'{e} d\'{e}montr\'{e}s par Le Page \cite{page}. \label{LGN}\end{remarque}
Nous sommes maintenant en mesure d'\'{e}noncer un th\'{e}or\`{e}me fondamental de Guivarc'h et Raugi concernant la s\'{e}paration des exposants de Lyapunov.
\begin{theorem}\cite[Th\'{e}or\`{e}me 3.5]{guivarch} On consid\`{e}re la m\^{e}me situation qu'au Th\'{e}or\`{e}me \ref{unicite} et on suppose de plus que $\mu$ a un moment d'ordre $1$. Alors pour toute repr\'{e}sentation irr\'{e}ductible et proximale $(\rho,V)$ de $G$,  $\lambda_1(\rho)>\lambda_2(\rho)$. \label{interieur}\end{theorem}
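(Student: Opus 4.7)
L'id\'{e}e est de combiner la formule de Furstenberg pour les exposants de Lyapunov avec la r\'{e}gularit\'{e} de la mesure stationnaire issue de la Zariski densit\'{e}. Je note d'abord que la densit\'{e} Zariski de $\Gamma$ dans $G$ entra\^{i}ne celle de $\rho(\Gamma)$ dans $\rho(G)$, d\'{e}j\`{a} $i$-$p$ par hypoth\`{e}se; le Th\'{e}or\`{e}me \ref{goldmarg} de Goldsheid-Margulis permet alors de conclure que $\rho(\Gamma)$ est $i$-$p$, et le Th\'{e}or\`{e}me \ref{unicite} fournit l'existence d'une unique mesure $\mu$-stationnaire $\nu$ sur $P(V)$.

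En consid\'{e}rant le cocycle $\sigma_\rho(g,[v]) := \log(||\rho(g)v||/||v||)$ sur $G \times P(V)$, l'identit\'{e} t\'{e}lescopique $\log ||\rho(x_n) v|| - \log ||v|| = \sum_{k=1}^{n} \sigma_\rho(g_k, g_1\cdots g_{k-1} \cdot [v])$ combin\'{e}e au th\'{e}or\`{e}me ergodique sur le syst\`{e}me produit (ergodique gr\^{a}ce \`{a} l'unicit\'{e} de $\nu$) donne la formule de Furstenberg $\lambda_1(\rho) = \int \sigma_\rho\, d\mu\, d\nu$. La Remarque \ref{ani} appliqu\'{e}e \`{a} $\bigwedge^2 V$ fournit ensuite une sous-repr\'{e}sentation irr\'{e}ductible proximale $(\tilde\rho, W)$ de $\bigwedge^2 \rho$ dont l'exposant maximal vaut $\lambda_1(\rho)+\lambda_2(\rho)$ (fait classique sur les puissances ext\'{e}rieures), et une seconde application de la formule de Furstenberg donne $\lambda_1(\rho)+\lambda_2(\rho) = \int \sigma_{\tilde\rho}\, d\mu\, d\eta$ pour la mesure stationnaire $\eta$ correspondante sur $P(W)$.

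\textbf{Obstacle principal.} La comparaison des deux expressions repose sur l'identit\'{e}  $||\rho(g)(v\wedge w)|| = ||\rho(g) v||\cdot||\rho(g)w||\cdot \sin\theta(\rho(g)v,\rho(g)w)$, qui ram\`{e}ne l'in\'{e}galit\'{e} $\lambda_1(\rho) > \lambda_2(\rho)$ \`{a} la d\'{e}croissance \emph{strictement exponentielle} de $\sin\theta(x_n v, x_n w)$ pour $v, w$ g\'{e}n\'{e}riques. Le Th\'{e}or\`{e}me \ref{unicite} donne d\'{e}j\`{a} la convergence vers z\'{e}ro presque s\^{u}rement (les directions $x_n\cdot[v]$ et $x_n\cdot[w]$ tendent vers la m\^{e}me limite al\'{e}atoire $Z$), mais ne contr\^{o}le pas la vitesse; c'est le c\oe{}ur de la preuve. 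J'envisage pour cela un argument de contraction quantitatif exploitant l'existence d'un \'{e}l\'{e}ment $\gamma\in\Gamma$ avec $\rho(\gamma)$ proximal (hypoth\`{e}se $i$-$p$) et le fait que $\nu$, issue d'un groupe Zariski dense, ne charge aucun sous-espace projectif propre: les trajectoires de la marche al\'{e}atoire visitent r\'{e}guli\`{e}rement un voisinage de la direction attractive $[v_\gamma^+]$ de $\rho(\gamma)$, et en combinant cela avec la non-concentration de $\eta$ sur les bivecteurs d\'{e}compos\'{e}s $[v_\gamma^+\wedge w]$, on extrait un taux de contraction strictement positif par un argument ergodique dans l'esprit de Furstenberg.
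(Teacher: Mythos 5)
Note first that the paper itself does \emph{not} prove this theorem: it cites it verbatim as \cite[Th\'{e}or\`{e}me 3.5]{guivarch}, and moves directly on to the next section. There is therefore no ``paper's proof'' to compare against; I will assess your proposal on its own.

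Your setup is reasonable, but the argument stalls precisely at the point you yourself flag as ``l'obstacle principal'', and the issue there is not a technicality: it is a circularity. Establishing that $\sin\theta(x_n v,x_n w)$ decays at a \emph{strictly exponential} rate \emph{is} the statement $\lambda_1(\rho)>\lambda_2(\rho)$, since $\frac{1}{n}\log\sin\theta(x_n v,x_n w)\to\lambda_2(\rho)-\lambda_1(\rho)$ for generic $v,w$. The almost sure convergence $\delta(x_n\cdot[v],x_n\cdot[w])\to 0$ that Th\'{e}or\`{e}me \ref{unicite} supplies says nothing about rate, and your closing sketch---recurrence of the walk near the attractive direction $[v_\gamma^+]$ of a proximal $\rho(\gamma)$, then ``un argument ergodique dans l'esprit de Furstenberg''---is never carried out and does not visibly break the circle (the walk does not apply $\gamma$ when it visits a neighbourhood of $[v_\gamma^+]$; it applies arbitrary increments).

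The ingredient actually used by Guivarc'h--Raugi to close this gap is a recurrence theorem for stationary cocycle sums of zero mean (Atkinson-type). Set $s\left(g,([x],[y])\right)=\log\frac{\delta(g[x],g[y])}{\delta([x],[y])}$ on $G\times\bigl(P(V)^2\setminus\Delta\bigr)$; one has $\frac{1}{n}\,\sup\;\E\bigl(s(x_n,\cdot)\bigr)\to\lambda_2(\rho)-\lambda_1(\rho)\leq 0$. If this limit were $0$, the recurrence theorem would force $\log\delta(x_n[v],x_n[w])$ to be recurrent, hence not to tend to $-\infty$; yet the $\mu$-boundary convergence of Th\'{e}or\`{e}me \ref{unicite}, combined with the properness of $\nu$, gives $\delta(x_n[v],x_n[w])\to 0$ a.s.\ for $\nu\otimes\nu$-a.e.\ $(v,w)$, a contradiction. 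It is this recurrence dichotomy, not a local contraction estimate near a fixed point, that separates the exponents. I also flag a secondary issue: Remarque \ref{ani} applies to \emph{irreducible} representations, whereas $\bigwedge^2 V$ is generally reducible; even after extracting an irreducible proximal piece you would still need to argue that it realizes the top exponent $\lambda_1(\rho)+\lambda_2(\rho)$ of $\bigwedge^2\rho$, which is not automatic.
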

\subsection{Cocyle $\Rightarrow$ Vitesse exponentielle}
\paragraph{Pourquoi les cocycles?}
Soit $G$ un groupe agissant sur un espace $X$. Un cocycle sur $G\times X$ est une application $s: G \times X \longrightarrow \RR$ telle que $s(g_1g_2,x)=s(g_1,g_2\cdot x)+s(g_2,x)$ pour tous $g_1,g_2\in G$ et tout $x\in X$.\\
Les cocycles permettent de transporter la structure multiplicative du groupe et la "transforment" en une somme.
Pour les marches al\'{e}atoires dans le monde non commutatif, cela sert \`{a} mimer le cas commutatif en transformant les quantit\'{e}s en des sommes de variables al\'{e}atoires, sous le prix de perdre l'ind\'{e}pendance\footnote{
Un exemple classique est $G=\operatorname{GL}_d(\RR)$, $X=P(\RR^d)$ et $f(g)=\log ||g||$. Pour obtenir  des th\'{e}or\`{e}mes limites pour $f(g_1\cdots g_n)$, un cocycle naturel \`{a} consid\'{e}rer est $s(g,[x])=\log \frac{||g x||}{||x||}$. C'est un des points de d\'{e}part dans la preuve des th\'{e}or\`{e}mes limites d\^{u} \`{a} Le Page \cite{page}  et expliqu\'{e}s dans la Remarque \ref{LGN}.}.\\

Le lemme de cocycle suivant est  d\^{u} \`{a} Le Page, plus pr\'{e}cis\'{e}ment dans la preuve du Th\'{e}or\`{e}me 1 de \cite{page}. Le lecteur peut aussi voir \cite{bougerol}.
\begin{lemme}(Un lemme de cocycle: premi\`{e}re version \footnote{Une deuxi\`{e}me version sera n\'{e}cessaire pour la Section \ref{secgeneral} et traitera le cas $l=0$.}) \label{cocycle}
Soit $G$ un semi-groupe topologique agissant sur un espace topologique $X$, $s$ un cocycle additif sur $G \times X$, $\mu$ une mesure de probabilit\'{e} qui satisfait la condition suivante: pour $r(g)=sup_{x\in X}{|s(g,x)|}$, il existe $\tau>0$ tel que $\label{condition}\EE\left(exp(\tau r(x_1))\right)< \infty$. On pose $l=\underset{n\rightarrow \infty}{\lim} \;\frac{1}{n}\sup_{x\in X}\;{\EE(s(x_n,x))}$. Si $l<0$, alors il existe $\lambda>0$, $\epsilon_0>0$, $n_0\in \NN^*$ tels que pour tous $0<\epsilon\leq  \epsilon_0$ et $n\geq n_0$:\; $\sup_{x\in X}\;\EE\big[\;exp[\;\epsilon\left(s(x_n,x) \right)\;]\; \big] \leq (1-\epsilon \lambda)^n$.
\end{lemme}
Montrons \`{a} pr\'{e}sent la convergence exponentielle de la marche al\'{e}atoire sur la fronti\`{e}re de Furstenberg. La convergence presque s\^{u}re pour  l'action sur l'espace projectif peut \^{e}tre trouv\'{e}e dans \cite{bougerol}. La preuve utilisait de fa\c{c}on cruciale la d\'{e}composition d'Iwasawa. Le th\'{e}or\`{e}me suivant donne en plus une vitesse exponentielle de convergence et n'utilise pas la d\'{e}composition d'Iwasawa.
\begin{theorem}(Convergence exponentielle de la marche al\'{e}atoire sur la fronti\`{e}re de Furstenberg)\\
On consid\`{e}re  les m\^{e}mes hypoth\`{e}ses qu'au Th\'{e}or\`{e}me \ref{unicite}. On suppose de plus que $\mu$ a un moment exponentiel (D\'{e}finition \ref{defimoment}).  On  note $\delta$ la distance sur la fronti\`{e}re $B$ introduite dans la D\'{e}finition \ref{distfront}.  Alors il existe une variable al\'{e}atoire $Z\in B$ de loi $\nu$ et $\rho\in ]0,1[$ tels que la marche al\'{e}atoire $x_n$ v\'{e}rifie:
\begin{equation}\underset{{x \in B}}{\sup}\; \EE \left( \delta(x_n\cdot x, Z ) \right) \leq \rho^n\label{convvv}\end{equation}
En fait, si $(\rho,V)$ est une repr\'{e}sentation irr\'{e}ductible proximale de $G$ et $\delta$ est la distance de Fubini-Study sur $V$ (voir la D\'{e}finition \ref{distfront}) alors, quitte \`{a} modifier $Z$ et $\rho$, nous avons:
\begin{equation}\underset{{x \in P(V)}}{\sup} \EE \left( \delta(x_n\cdot [x], Z ) \right) \leq \rho^n \label{convv}\end{equation}

\label{convdirection}\end{theorem}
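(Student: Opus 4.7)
The plan is to reduce, via Lemma \ref{furstprojective}, to proving (\ref{convv}) for each of the finitely many irreducible proximal representations $(\rho_i,V_i)$ used in the embedding of $B$; the estimate (\ref{convvv}) on $B$ then follows by taking the maximum of the $\delta_i$ and applying the triangle inequality. The exponential moment of $\mu$ is preserved under each $\rho_i$, so we work with a single irreducible proximal representation $(\rho,V)$ of $G$, noting that $\rho(G)\subset\operatorname{SL}(V)$ since $G$ is semi-simple.

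The heart of the argument is an application of Lemma \ref{cocycle} to the contraction cocycle on $X=\{([x],[y])\in P(V)\times P(V):[x]\neq[y]\}$,
\[
\sigma\bigl(g,([x],[y])\bigr) \;=\; \log\frac{\delta(g\cdot[x],g\cdot[y])}{\delta([x],[y])} \;=\; \log\frac{\|\wedge^{2}g\,(x\wedge y)\|}{\|x\wedge y\|} \;-\; \log\frac{\|gx\|}{\|x\|} \;-\; \log\frac{\|gy\|}{\|y\|}.
\]
The bound $|\sigma(g,\cdot)|\leq C\bigl(\log\|g\|+\log\|g^{-1}\|\bigr)$, combined with the polynomial control $\|g^{-1}\|\leq C'\|g\|^{\dim V-1}$ valid in $\operatorname{SL}(V)$ and the exponential moment hypothesis on $\mu$, yields the exponential integrability of $r(g)=\sup_X|\sigma(g,\cdot)|$ required by Lemma \ref{cocycle}. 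The crucial identification is
\[
l \;:=\; \lim_n \tfrac{1}{n}\sup_{X}\E\bigl(\sigma(x_n,\cdot)\bigr) \;=\; \lambda_2(\rho)-\lambda_1(\rho) \;<\; 0,
\]
which combines the strict inequality from Theorem \ref{interieur} with the uniform convergence of $\tfrac{1}{n}\E\log(\|x_n x\|/\|x\|)$ to $\lambda_1(\rho)$ and of $\tfrac{1}{n}\E\log(\|\wedge^{2}x_n(x\wedge y)\|/\|x\wedge y\|)$ to $\lambda_1(\rho)+\lambda_2(\rho)$ in the $i$-$p$ setting, both being consequences of Furstenberg's formula applied to the unique stationary measure $\nu$ on $P(V)$ and its analogue on the Grassmannian of $2$-planes. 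Lemma \ref{cocycle} then yields $\epsilon,\lambda>0$ with $\sup_{[x],[y]}\E\bigl(\delta(x_n[x],x_n[y])^\epsilon\bigr)\leq(1-\epsilon\lambda)^n$ for $n$ large.

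From this pairwise contraction, the existence and exponential approach to the limit $Z$ follow by a standard independence argument. Writing $x_{n+k}=x_n\,m_{n,k}$ with $m_{n,k}=g_{n+1}\cdots g_{n+k}$ independent of $x_n$ and conditioning on $m_{n,k}$ produces $\E\bigl(\delta(x_n[x],x_{n+k}[x])^\epsilon\bigr)\leq(1-\epsilon\lambda)^n$ uniformly in $[x]$ and $k\geq 0$. Hence $(x_n[x])_n$ is Cauchy in $L^\epsilon$ and converges almost surely to a limit $Z$ independent of the starting point (since $\E\delta(x_n[x],x_n[y])^\epsilon\to 0$). Dominated convergence (using $\delta\leq 1$) then gives $\sup_{[x]}\E\bigl(\delta(x_n[x],Z)^\epsilon\bigr)\leq(1-\epsilon\lambda)^n$, and since $\epsilon\in(0,1]$ together with $\delta\leq 1$ imply $\delta\leq\delta^\epsilon$, we deduce (\ref{convv}) with $\rho=1-\epsilon\lambda$. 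The law of $Z$ is $\mu$-stationary by the tail identity $Z=g_1\cdot Z'$, where $Z'$ has the same law as $Z$ and is independent of $g_1$, hence equals $\nu$ by the uniqueness statement of Theorem \ref{unicite}. The most delicate point, in my view, is establishing $l<0$ in the uniform sense demanded by Lemma \ref{cocycle}: Theorem \ref{interieur} gives $\lambda_1>\lambda_2$ only along typical trajectories, and ruling out pathological starting pairs where the growth rate degenerates is precisely where proximality, inherited from the Zariski density of $\Gamma$ via Theorem \ref{goldmarg}, is essential.
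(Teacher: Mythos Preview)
Your proof is correct and follows essentially the same route as the paper: reduction via Lemma \ref{furstprojective} to a single irreducible proximal representation, then application of the cocycle lemma (Lemma \ref{cocycle}) to the contraction cocycle $\sigma(g,([x],[y]))=\log\frac{\delta(g[x],g[y])}{\delta([x],[y])}$, with $l\leq\lambda_2(\rho)-\lambda_1(\rho)<0$ by Theorem \ref{interieur}. The only minor differences are that the paper bounds $l$ more simply via $\|\wedge^2 x_n(x\wedge y)\|\leq\|\wedge^2 x_n\|\,\|x\wedge y\|$ (so no Grassmannian uniformity is needed, only the uniform convergence of $\frac{1}{n}\EE\log\|x_n u\|$ on the sphere from \cite[Ch.~III, Cor.~3.4]{bougerol}), and in the endgame the paper uses the variable $Z$ already supplied by Theorem \ref{unicite}, integrating the inequality $\delta(x_n[x],Z)\leq\delta(x_n[x],x_k[y])+\delta(x_k[y],Z)$ against $d\nu([y])$ and letting $k\to\infty$, whereas you construct $Z$ directly as an $L^\epsilon$-Cauchy limit and then identify its law.
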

\begin{proof} Par le Lemme \ref{furstprojective}, il suffit de prouver la deuxi\`{e}me partie du th\'{e}or\`{e}me.  Tout d'abord, montrons que

\begin{equation} \underset{n\rightarrow \infty}{\limsup} \frac{1}{n} \log \underset{[x],[y]\in P(V)}{\sup} \EE \left( \delta(x_n \cdot[x],x_n \cdot[y]) \right) < 0 \label{contracdistance}
\end{equation}
En effet,  consid\'{e}rons l'action naturelle de  $G$ sur $X=\{([x],[y])\in P(V)^2; [x]\neq [y]\}$.  Pour tous $g\in X$ et $([x],[y])\in X$,  on pose $s\left( g, ([x],[y]) \right)= \log \frac{\delta(g\cdot x, g\cdot y)}{\delta(x,y)}$. Il est imm\'{e}diat que $s$ est un cocycle sur $G\times X$. L'hypoth\`{e}se de moment du lemme de cocycle est v\'{e}rifi\'{e}e car $\mu$ a un moment exponentiel, donc $\rho(\mu)$ aussi (voir la D\'{e}finition \ref{defimoment}). Avec les notations du m\^{e}me lemme et gr\^{a}ce \`{a} l'expression de la distance $\delta$, on a:
$$l\leq \underset{n\rightarrow \infty}{\lim} \frac{1}{n} \EE (\log  ||\bigwedge^2 \rho(x_n)||) - 2\underset{n\rightarrow \infty}{\lim} \frac{1}{n}  \underset{||u||=1}{\inf} \EE(\log ||\rho(x_n) u ) ||)$$
D'apr\`{e}s la d\'{e}finition de l'exposant de Lyapunov, $ \frac{1}{n}  \EE (\log ||\bigwedge^2 \rho(x_n)||)$ converge vers  $\lambda_1(\rho)+\lambda_2(\rho)$. Gr\^{a}ce \`{a} l'irr\'{e}ductibilit\'{e} de $\rho$, on peut montrer que $\frac{1}{n} \EE(\log  ||\rho(x_n) u ) ||)$ converge vers $\lambda_1(\rho)$ uniform\'{e}ment sur la sph\`{e}re unit\'{e} (voir  \cite[Chapitre III, Corollaire 3.4]{bougerol}). Ainsi $l \leq \lambda_2(\rho)-\lambda_1(\rho)$. Par le Th\'{e}or\`{e}me \ref{interieur}, $l<0$. La conclusion du Lemme \ref{cocycle} et le fait que $\rho \leq 1$ permettent de conclure la preuve de (\ref{contracdistance}).\\

 Finalement, pour tous $k>n$ et tous $[x],[y]\in P(V)$, \'{e}crivons $ \delta(x_n\cdot x, Z) \leq \delta(x_n\cdot [x], x_k \cdot [y]) + \delta(x_k\cdot [y], Z)$. Par ind\'{e}pendance des incr\'{e}ments, on en d\'{e}duit que  $\EE \left( \delta(x_n\cdot x, Z)\right) \leq \underset{[x],[y]\in P(V)}{\sup}\EE \left( \delta(x_n\cdot [x] , x_n \cdot [y])\right) + \EE \left(\delta(x_k\cdot [y], Z)\right)$.  Par l'in\'{e}galit\'{e} (\ref{contracdistance}), la premi\`{e}re quantit\'{e} d\'{e}cro\^{\i}t exponentiellement vite vers $0$ pour $n$ assez grand. Donc il existe $\rho\in ]0,1[$, tel que pour tous $k>n$ assez grands et tous $[x],[y]\in P(V)$:
\begin{equation}\EE \left( \delta(x_n\cdot [x], Z) \right) \leq \rho^n +\EE \left(\delta(x_k\cdot [y],Z )\right)\label{integrer}\end{equation}
D'apr\`{e}s le th\'{e}or\`{e}me de Fubini et le point (i) du Th\'{e}or\`{e}me \ref{unicite},\\$\int {\EE \left(\delta(x_k\cdot [y], Z)\right) \,d\nu([y])} \underset{k \rightarrow + \infty}{\longrightarrow} 0$.  Il suffit alors d'int\'{e}grer (\ref{integrer}) par rapport \`{a} $d\nu([y])$ et de tendre $k$ vers l'infini.  \end{proof}
 
 \section{Equidistribution et ind\'{e}pendance asymptotique sur la fronti\`{e}re de Furstenberg}\label{secasymp}
Le but de ce paragraphe est de d\'{e}montrer le Th\'{e}or\`{e}me
\ref{indasymp}.\\

 Dans cette section, $G$ est un groupe linéaire algébrique réel semi-simple sans facteurs compacts. Pour tout $g\in G$, on note $g=k(g)a(g)u(g)$ le résultat dans la décomposition $G=KA^+K$ (voir le Théorème \ref{theokak}). On note $B$ sa frontière de Furstenberg (voir la Définition \ref{deffront}) et on rappelle que $K$ est un compact maximal de $G$, $M$ le centralisateur de $A$ dans $K$ et que la frontière $B$ s'identifie \`{a} l'espace homogène $K/M$, $M$ étant le centralisateur de $A$ dans $K$.\\

Gorodnik et Oh ont d\'{e}montr\'{e} le r\'{e}sultat suivant:
\begin{theorem}\cite[Th\'{e}or\`{e}me 1.6]{gorodnik}(Ind\'{e}pendance asymptotique pour un comptage sur l'espace sym\'{e}trique)\\
Soient  $d$  une m\'{e}trique
Reimanienne $K$-invariante sur l'espace sym\'{e}trique
$K\backslash G$ et
 $\nu$ la mesure de Haar sur $K$.  Pour tout $T\in {\RR^{+ *}}$, on note $A_T^+=\{a\in A^+; d(K, K a) < T\}$ et $G_T=\{g\in G;  a(g)\in A_T^+\}$.
   On consid\`{e}re un r\'{e}seau $\Gamma$ de $G$.\\
Alors pour tous $\Omega_1 \subset K$, $\Omega_2 \subset K$ dont la fronti\`{e}re est de $\nu$-mesure  nulle:
$$\textrm{Card}\{g\in G_T\cap \Gamma;\; k(g)M \subset \Omega_1 M; \;Mu(g) \subset M \Omega_2\} \underset{T \rightarrow + \infty}{\sim}\nu(\Omega_1 M) \nu( M \Omega_2) \frac{\textrm{Vol}(G_T)}{\textrm{Vol}(G/\Gamma)}$$
\label{gorodoh} \end{theorem}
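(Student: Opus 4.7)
Le plan est d'adapter le sch\'{e}ma de comptage de points de r\'{e}seau d'Eskin--McMullen \cite{eam} et Duke--Rudnick--Sarnak \cite{drs}, raffin\'{e} dans \cite{gorodnik} pour traiter s\'{e}par\'{e}ment les $K$-composantes gauche et droite dans la d\'{e}composition $KA^+K$. Quitte \`{a} remplacer $\Omega_1, \Omega_2$ par $\Omega_1 M$ et $M\Omega_2$ (dont les bords restent de $\nu$-mesure nulle), on ram\`{e}ne le cardinal cherch\'{e} \`{a} $\#(\Gamma \cap B_T)$ avec $B_T := \Omega_1 A_T^+ \Omega_2$. La formule d'int\'{e}gration de Haar en coordonn\'{e}es $KAK$, avec jacobien $J(a) = \prod_{\alpha\in\Sigma^+}(\sinh \alpha(\log a))^{m_\alpha}$, donnera imm\'{e}diatement $\mathrm{Vol}(B_T) = \nu(\Omega_1 M)\,\nu(M\Omega_2)\,\mathrm{Vol}(G_T)$, si bien que l'\'{e}nonc\'{e} se ram\`{e}ne \`{a} l'asymptotique classique $\#(\Gamma \cap B_T) \sim \mathrm{Vol}(B_T)/\mathrm{Vol}(G/\Gamma)$.

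Je poserai $F_T(g\Gamma) := \sum_{\gamma \in \Gamma} \mathbf{1}_{B_T}(g\gamma^{-1})$ sur $G/\Gamma$, de sorte que le cardinal cherch\'{e} \'{e}gale $F_T(e\Gamma)$, puis je testerai $F_T$ contre un bump lisse $\psi_\epsilon \in C_c(G/\Gamma)$ concentr\'{e} pr\`{e}s de $e\Gamma$ et d'int\'{e}grale $1$. Le d\'{e}pliage standard dans un domaine fondamental ram\`{e}ne $\langle F_T, \psi_\epsilon \rangle_{L^2(G/\Gamma)}$ \`{a} une int\'{e}grale sur $G$ du type $\int_G \mathbf{1}_{B_T}(g)\,\tilde\psi_\epsilon(g)\,dg$ pour un rel\`{e}vement appropri\'{e} $\tilde\psi_\epsilon$. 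Le c\oe ur technique sera le th\'{e}or\`{e}me de m\'{e}lange de Howe--Moore : pour $\phi_1, \phi_2 \in C_c(G/\Gamma)$ et $a \to \infty$ dans $A^+$,
$$\int_{G/\Gamma} \phi_1(k_1 a k_2 \cdot x)\,\phi_2(x)\, dx \;\longrightarrow\; \frac{1}{\mathrm{Vol}(G/\Gamma)} \int \phi_1 \int \phi_2,$$
convergence qui sera uniforme en $(k_1, k_2) \in K \times K$ par \'{e}quicontinuit\'{e} sur le compact $K \times K$. En int\'{e}grant cette limite contre $dk_1 \otimes J(a)\,da \otimes dk_2$ sur $\Omega_1 \times A_T^+ \times \Omega_2$, on obtiendra
$$\langle F_T, \psi_\epsilon \rangle = (1+o(1))\,\frac{\nu(\Omega_1 M)\,\nu(M\Omega_2)\,\mathrm{Vol}(G_T)}{\mathrm{Vol}(G/\Gamma)}\;\int \psi_\epsilon.$$

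Pour convertir cette estimation liss\'{e}e en la valeur ponctuelle $F_T(e\Gamma)$, j'invoquerai le \emph{wavefront lemma} d'Eskin--McMullen, qui affirme que $\mathrm{Vol}(U B_T U\, \triangle\, B_T)/\mathrm{Vol}(B_T) \to 0$ quand $T \to \infty$, pour tout voisinage $U$ suffisamment petit de $e$ dans $G$. Cette propri\'{e}t\'{e} de well-roundedness utilise la $\nu$-nullit\'{e} des bords de $\Omega_1, \Omega_2$ dans les directions $K$, ainsi que la g\'{e}om\'{e}trie poly\'{e}drale de $A_T^+$ jointe \`{a} la croissance exponentielle du jacobien $J$ dans la direction $A^+$. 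Un encadrement standard $F_T * \psi_\epsilon^- \le F_T \le F_T * \psi_\epsilon^+$ contre des bumps de plus en plus fins transf\'{e}rera alors l'asymptotique de $\langle F_T, \psi_\epsilon\rangle$ \`{a} $F_T(e\Gamma)$, achevant la preuve. La difficult\'{e} principale sera la combinaison de l'uniformit\'{e} du m\'{e}lange dans les deux directions $K$ et du well-roundedness de $B_T$, ensemble tr\`{e}s anisotrope --- volume exponentiel mais \'{e}paisseur born\'{e}e dans les directions de Cartan --- ce qui est pr\'{e}cis\'{e}ment le contenu technique central de \cite{gorodnik}.
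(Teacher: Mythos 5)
This theorem is not proved in the paper at all: it is cited verbatim as \cite[Th\'{e}or\`{e}me 1.6]{gorodnik} and serves only as the archimedean \emph{comparison point} for the paper's own Th\'{e}or\`{e}me \ref{indasymp}, which concerns a probabilistic count and whose proof (via the Lemma \ref{lemmeind} and Th\'{e}or\`{e}me \ref{convkak}) has nothing to do with mixing or lattice point counting. There is therefore no proof in the paper to compare your sketch against, and you should be aware that reproving this cited result is out of scope here.

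That said, your outline does capture the genuine Gorodnik--Oh strategy: unfolding $F_T$ against smooth bumps on $G/\Gamma$, Howe--Moore mixing made uniform over the compact factors $K\times K$, the Haar integration formula in $KA^+K$ coordinates to relate $\mathrm{Vol}(\Omega_1 M\, A_T^+\, M\Omega_2)$ to $\nu(\Omega_1 M)\,\nu(M\Omega_2)\,\mathrm{Vol}(G_T)$, and a wavefront/well-roundedness argument to pass from the smoothed inner product to the pointwise value $F_T(e\Gamma)$. Two caveats. First, the uniformity of mixing in $(k_1,k_2)$ is not literally ``by equicontinuity on $K\times K$'': you need to observe that the $K$-translates of a fixed $\phi\in C_c(G/\Gamma)$ form a norm-compact family in $L^2_0(G/\Gamma)$ and that Howe--Moore decay is uniform on such families; this is a short argument but must be said. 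Second, and more seriously, the assertion $\mathrm{Vol}(U B_T U\,\triangle\,B_T)/\mathrm{Vol}(B_T)\to 0$ is precisely the hard part of \cite{gorodnik}: the set $B_T=\Omega_1 M A_T^+ M\Omega_2$ is strongly anisotropic, and perturbing by $U$ on both sides simultaneously can move the $KA^+K$ coordinates along singular directions of $A^+$ where the classical wavefront lemma of Eskin--McMullen degenerates. Gorodnik--Oh's resolution requires a refined two-sided wavefront estimate together with the $\nu$-nullity of $\partial\Omega_1,\partial\Omega_2$ to control the boundary contribution. Your sketch names this difficulty correctly but does not resolve it, so as it stands the argument has a real gap where the bulk of \cite{gorodnik} lives.
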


Outre l'\'{e}quidistribution vers la mesure de Haar sur la
fronti\`{e}re de Furstenberg $B$, ce r\'{e}sultat  montre
que les deux composantes $k(g)M\in B$ et $Mu(g)\in B^*$ d'un
\'{e}l\'{e}ment $g=k(g)a(g)u(g)\in \Gamma$ pris uniform\'{e}ment
au hasard dans une grande boule de l'espace sym\'{e}trique,
peuvent \^{e}tre choisies ind\'{e}pendamment l'une de l'autre
suivant la mesure de Haar. Le Th\'{e}or\`{e}me \ref{indasymp}
propose un \'{e}nonc\'{e} \'{e}quivalent pour un comptage
probabiliste. La mesure de Haar est remplac\'{e}e par les mesures
stationnaires sur $B$ et $B^*$ (Th\'{e}or\`{e}me \ref{unicite}). De plus, les vitesses de convergences obtenues sont exponentielles.

\paragraph{Points cl\'{e}s de la preuve} La preuve du Th\'{e}or\`{e}me \ref{indasymp}
 repose sur deux points cruciaux: le Lemme \ref{lemmeind} ci-dessous, qui est un  lemme g\'{e}n\'{e}ral d'ind\'{e}pendance asymptotique, et
 le Th\'{e}or\`{e}me \ref{convkak} qui montre la convergence exponentielle des parties $K$ de la d\'{e}composition $KA^+K$ de la marche aléatoire. 

\begin{propdef}(Convergence exponentielle de variables al\'{e}atoires)
Nous dirons qu'une suite $(X_n)_{n\in \NN}$ de variables al\'{e}atoires \`{a} valeurs dans un espace m\'{e}trique complet $(X,d)$  converge exponentiellement vite  vers une variable al\'{e}atoire $X_{\infty}$ si $\underset{n\rightarrow \infty}{\limsup}\frac{1}{n}\log \EE \left( d(X_n, X_{\infty})\right) <0$. Par compl\'{e}tude de l'espace, ceci est \'{e}quivalent \`{a} supposer que $\underset{n\rightarrow \infty}{\limsup}\frac{1}{n}\log \EE \left( d(X_n, X_{n+1})\right) <0$.  Remarquer que par Borel-Cantelli,  une telle suite converge presque s\^{u}rement. \label{convexpdef}\end{propdef}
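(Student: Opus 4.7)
Le r\'{e}sultat contient trois affirmations \`{a} prouver: l'\'{e}quivalence des deux conditions de convergence exponentielle, l'existence (implicite dans la seconde formulation) d'une limite $X_\infty$, et la convergence presque s\^{u}re par Borel-Cantelli. Mon plan est d'utiliser des outils standards: in\'{e}galit\'{e} triangulaire, Fubini-Tonelli, compl\'{e}tude de $(X,d)$ et lemme de Borel-Cantelli.

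La direction facile est imm\'{e}diate: si $\EE(d(X_n,X_\infty))\leq C\rho^n$ avec $\rho\in\; ]0,1[$, l'in\'{e}galit\'{e} triangulaire donne $\EE(d(X_n,X_{n+1}))\leq \EE(d(X_n,X_\infty))+\EE(d(X_{n+1},X_\infty))\leq C(1+\rho)\rho^n$, ce qui fournit la d\'{e}croissance exponentielle souhait\'{e}e.

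La r\'{e}ciproque constitue le c\oe{}ur de la preuve. L'id\'{e}e est d'exploiter la sommabilit\'{e} de la s\'{e}rie t\'{e}lescopique $\sum_n d(X_n,X_{n+1})$: sous l'hypoth\`{e}se $\EE(d(X_n,X_{n+1}))\leq C\rho^n$, Fubini-Tonelli donne $\EE\big(\sum_{n\geq 0} d(X_n,X_{n+1})\big)\leq C/(1-\rho)<\infty$, et cette s\'{e}rie converge donc presque s\^{u}rement. Par cons\'{e}quent, $(X_n(\omega))$ est de Cauchy pour presque tout $\omega$; par compl\'{e}tude de $(X,d)$, elle converge vers un \'{e}l\'{e}ment que l'on note $X_\infty(\omega)$ (prolong\'{e} arbitrairement sur l'\'{e}v\'{e}nement n\'{e}gligeable o\`{u} la convergence \'{e}choue, ce qui assure la mesurabilit\'{e}). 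Pour la vitesse exponentielle, on majore $d(X_n,X_m)\leq \sum_{k=n}^{m-1} d(X_k,X_{k+1})$ pour $m>n$, puis on fait tendre $m\to\infty$ via le lemme de Fatou et on prend l'esp\'{e}rance, ce qui livre $\EE(d(X_n,X_\infty))\leq C\rho^n/(1-\rho)$.

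Enfin, la convergence presque s\^{u}re d\'{e}coule de Borel-Cantelli appliqu\'{e} \`{a} un $\eta\in\; ]\rho,1[$: l'in\'{e}galit\'{e} de Markov donne $\PP(d(X_n,X_\infty)>\eta^n)\leq \big(C/(1-\rho)\big)(\rho/\eta)^n$, terme d'une s\'{e}rie convergente, donc presque s\^{u}rement $d(X_n,X_\infty)\leq \eta^n$ pour tout $n$ assez grand. Aucun obstacle r\'{e}el n'est \`{a} pr\'{e}voir; le seul point demandant une petite attention est la mesurabilit\'{e} de $X_\infty$, assur\'{e}e par le proc\'{e}d\'{e} de prolongement d\'{e}crit ci-dessus.
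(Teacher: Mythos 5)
Le texte ne donne pas de d\'{e}monstration explicite de cette Proposition/D\'{e}finition : l'\'{e}quivalence et la convergence presque s\^{u}re y sont simplement affirm\'{e}es comme allant de soi. Votre preuve est correcte et constitue exactement l'argument standard sous-entendu (in\'{e}galit\'{e} triangulaire pour le sens facile ; Fubini-Tonelli, crit\`{e}re de Cauchy et compl\'{e}tude pour construire $X_\infty$ et r\'{e}cup\'{e}rer la vitesse via Fatou ; Markov et Borel-Cantelli pour la convergence presque s\^{u}re), sans lacune ni d\'{e}tour.
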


 \begin{lemme}(Ind\'{e}pendance asymptotique)\\
 Soit $(\Gamma,\star)$ un mono\"{\i}de muni d'une mesure de probabilit\'{e} $\mu$. On d\'{e}finit sur un espace probabilis\'{e} $(\Omega, \Ff,\PP)$ une suite $(g_i)_{i\geq 1}$  de variables al\'{e}atoires ind\'{e}pendantes de m\^{e}me loi $\mu$.  Soient  $Y_1$ et $Y_2$ deux espaces m\'{e}triques complets, $\alpha$ et $\beta$ deux fonctions mesurables d\'{e}finies sur $\Gamma$ \`{a} valeurs respectivement  dans $Y_1$ et $Y_2$. On pose $F_n=\alpha(g_1\star \cdots \star g_n )$, $G_n=\beta(g_1 \star \cdots \star g_n)$ et $H_n=\beta(g_n \star \cdots \star g_1)$.\\

\emph{Version sans vitesse:} Si  $F_n$ et $H_n$ convergent presque s\^{u}rement respectivement dans $Y_1$ et $Y_2$ vers des variables al\'{e}atoires de lois respectives $\nu_1$ et $\nu_2$, alors $F_n$ et $G_n$ deviennent asymptotiquement ind\'{e}pendants, dans le sens suivant: pour tous ouverts $O_1$ et $O_2$ respectivement dans  $Y_1$ et $Y_2$ tels que $\nu_1(\partial O_1)=0$ et $\nu_2(\partial O_2)=0$, alors:

 $$\PP(F_n \in O_1; G_n \in O_2)\underset{n \rightarrow \infty}{\longrightarrow} \nu_1(O_1) \nu_2(O_2)$$

\emph{Version avec vitesse:} Si $F_n$ et $H_n$ convergent exponentiellement vite (voir la D\'{e}finition \ref{convexpdef}) alors $F_n$ et $G_n$ deviennent asymptotiquement ind\'{e}pendants avec vitesse exponentielle dans le sens suivant: il existe $\rho\in ]0,1[$ tel que pour toute fonction r\'{e}elle lipschitzienne $\phi$ d\'{e}finie sur $Y_1\times Y_2$,
  $$\big| \int_{\Gamma} {\phi \left( \alpha(\gamma) , \beta(\gamma) \right)\; \textrm{d}\mu^n(\gamma)} - \int_{Y_1 \times Y_2}{\phi(x,y)\; \textrm{d}( {\nu_1\otimes\nu_2}) (x,y) } \big| \leq \textrm{Lip($\phi$)}\; \rho^n$$\label{lll}\label{lemmeind}\end{lemme}

\begin{proof}
Nous d\'{e}montrons la version avec vitesse. Celle sans vitesse peut \^{e}tre d\'{e}montr\'{e}e de mani\`{e}re analogue.  Soit $Z_1$ la limite presque s\^{u}re  de la suite des variables al\'{e}atoires  $F_n$. On d\'{e}finit sur le m\^{e}me espace probabilis\'{e} une copie ind\'{e}pendante $(g'_i)_{i\geq 1}$ de $(g_i)_{i\geq 1}$.  L'\'{e}quivalence dans la D\'{e}finition/Proposition \ref{convexpdef} permet de d\'{e}finir une variable al\'{e}atoire $Z_2$, ind\'{e}pendante de $Z_1$, telle que la suite $\beta(g'_n\star \cdots \star g'_1)$ converge de fa\c{c}on exponentielle vers $Z_2$. En particulier, $Z_1$ et $Z_2$ sont ind\'{e}pendantes. Soit $\phi$ une fonction lipschitzienne sur $Y_1\times Y_2$. On \'{e}crit:
$$E\left(\phi(F_n,G_n) \right) -\EE\left(\phi(Z_1,Z_2) \right)= \Delta_1 + \Delta_2 + \Delta_3+\Delta_4$$
o\`{u}
\begin{eqnarray}
 \Delta_1&=&\EE\left(\phi(F_n,G_n) \right) - \EE\left(\phi(F_{\frac{n}{2}},G_n) \right)\nonumber\\ \Delta_2&=&\EE\left(\phi(F_{\frac{n}{2}},G_n) \right)- \EE\left(\phi(F_{\frac{n}{2}}, \beta(g_{\frac{n}{2}+1}\star \cdots \star g_n) ) \right) \nonumber\\
\Delta_3&=& \EE\left(\phi(F_{\frac{n}{2}}, \beta(g'_{\frac{n}{2}+1}\star \cdots \star g'_n)) \right) - \EE\left(\phi(Z_1,\beta(g'_{\frac{n}{2}+1} \star \cdots \star g'_n))\right)\nonumber\\
\Delta_4&=&\EE\left(\phi(Z_1,\beta(g'_{\frac{n}{2}+1} \star \cdots \star g'_n) )\right) - \EE\left(\phi(Z_1,Z_2)\right)  \nonumber
\end{eqnarray}

\begin{itemize}
\item
   Dans $\Delta_3$, nous avons remplac\'{e} $\beta(g_{\frac{n}{2}}\star \cdots \star g_n)$  par $\beta(g'_{\frac{n}{2}+1} \star \cdots \star g'_n)$ car, d'une part ces variables ont m\^{e}me loi et d'autre part les variables $F_{\frac{n}{2}}$ et $\beta(g_{\frac{n}{2}+1}\star \cdots \star g_n)$ qui apparaissent dans  $\Delta_2$ sont ind\'{e}pendantes. Cette \'{e}tape est simple mais cruciale pour la preuve.

\item  D'apr\`{e}s l'hypoth\`{e}se, les quantit\'{e}s $|\Delta_1|$ et $|\Delta_3|$ convergent exponentiellement vite vers z\'{e}ro.

\item  La deuxi\`{e}me \'{e}tape importante de la preuve est l'estim\'{e}e $\Delta_2$. Nous affirmons qu'il existe $\rho_2\in ]0,1[$ tel que pour tout entier $n$ assez grand: $|\Delta_2|\leq \rho^n$.
En effet, pour tout entier $n\in \NN^*$, les $n$-uplets $(g_1,\cdots, g_n)$ et $(g_n,\cdots, g_1)$ ont m\^{e}me loi. En notant $\delta$ la m\'{e}trique sur $Y_2$, on en d\'{e}duit que:
\begin{eqnarray}\EE \Big( \delta\left(\beta(g_1 \star \cdots \star g_n ),\beta(g_{\frac{n}{2}+1}\star \cdots  \star g_n ) \right)\Big) &=&
\EE  \Big(  \delta \left(\beta(g_n \star \cdots  \star g_1 ),\beta(g_{\frac{n}{2}}\star \cdots \star g_1 ) \right)\Big)\nonumber\\
&=&\EE \left( \delta(H_n ,H_{\frac{n}{2}}) \right)\nonumber\end{eqnarray}
D'apr\`{e}s l'hypoth\`{e}se,  $H_n$ converge exponentiellement vite dans $Y_2$. Donc la quantit\'{e} ci-dessus d\'{e}cro\^{\i}t exponentiellement vite vers $0$. Donc $|\Delta_2|$ aussi.

\item Finalement, $|\Delta_4|\preceq \textrm{Lip($\phi$)}\; \rho_2^n$. En effet, l'ind\'{e}pendance entre $Z_1$ et la suite des $g'_i$ d'une part et l'\'{e}galit\'{e} en loi des $n$-uplets  $(g'_1,\cdots, g'_n)$ et $(g'_n,\cdots, g'_1)$ d'autre part permettent d'\'{e}crire:
$ \Delta_4=\EE\left(\phi(Z_1, \beta(g'_{\frac{n}{2}} \star \cdots \star g'_1) )\right) - \EE\left(\phi(Z_1,Z_2)\right)$. On conclut gr\^{a}ce \`{a} la convergence exponentielle de\\  $\beta(g'_n\star \cdots \star g'_1)$.
\end{itemize}
  \end{proof}

\begin{remarque}
\begin{enumerate}
\item Ce lemme implique que si  $\alpha(g_1\star \cdots \star g_n)$ converge presque s\^{u}rement alors $\alpha(g_n\star \cdots \star g_1)$ ne peut pas converger  presque s\^{u}rement, sauf si les deux limites sont des constantes.
\item En particulier, quand $G$ est ab\'{e}lien, une fonction mesurable   $\alpha(x_n)$ de $x_n$ ne peut converger presque s\^{u}rement que vers une constante. Ce fait est \`{a} comparer avec la trivialit\'{e} de la fronti\`{e}re de Poisson de $G$.
\item Nous r\'{e}f\'{e}rons aux travaux de Tutubalin \cite{tutubalin} et Vircer \cite{vircer} repris par Guivarc'h \cite{Zariskiclosure}, o\`{u} l'id\'{e}e de couper le temps en deux est utilis\'{e}e. \end{enumerate}
\label{bonnerem}\end{remarque}
Le Th\'{e}or\`{e}me suivant montre la convergence exponentielle de la composante gauche suivant  $K$ dans la d\'{e}composition de la marche al\'{e}atoire $x_n=g_1\cdots g_n$ dans le produit $KA^+K$. Nous proposons une preuve  plus directe que celle donn\'{e}e par l'auteur dans \cite{aoun} n'utilisant pas la d\'{e}composition d'Iwasawa. Par contre, cette preuve n'est valide que sur $\RR$ contrairement au travail cit\'{e} car elle utilise de fa\c{c}on cruciale les projections orthogonaux des espaces euclidiens. Elle est inspir\'{e}e de la preuve de Goldsheid-Margulis du Th\'{e}or\`{e}me d'Oseledets  \cite[Th\'{e}or\`{e}me 1.2]{Margulis}.

\begin{theorem}(Convergence exponentielle dans la d\'{e}composition $KA^+K$ sur la fronti\`{e}re de Furstenberg)\\
 On consid\`{e}re un sous-groupe Zariski dense $\Gamma$ de $G$, $\mu$ une mesure de probabilit\'{e} adapt\'{e}e (D\'{e}finition \ref{defgen}) ayant un moment
exponentiel (D\'{e}finition \ref{defimoment}) et une marche al\'{e}atoire
$x_n = g_1\cdots g_n$ sur $\Gamma$, avec les $g_i$ ind\'{e}pendants et
de m\^{e}me loi $\mu$.\\
Alors la composante $k(x_n) M$ de la marche
al\'{e}atoire $x_n$ converge presque s\^{u}rement dans la fronti\`{e}re de
Furstenberg $B$, avec vitesse exponentielle, vers une variable
al\'{e}atoire $Z_1$ de loi l'unique mesure $\mu$-invariante
$\nu_1$ sur la fronti\`{e}re. Plus pr\'{e}cis\'{e}ment, si $\delta$
est la distance sur la fronti\`{e}re de Furstenberg $B$ introduite
dans la D\'{e}finition \ref{distfront}, il existe $\rho_1\in ]0,1[$
tel que:
\begin{equation}\int {\delta\left(k(g_1\cdots g_n )M,Z_1\right)  \,d\mu(g_1)\cdots d\mu(g_n) )} \leq \rho_1^n\label{conv1}\end{equation}
De fa\c{c}on similaire, la composante $M u(y_n)$ de la marche al\'{e}atoire \`{a} gauche $y_n=g_n\cdots g_1$ converge de fa\c{c}on exponentielle vers une variable al\'{e}atoire $Z_2$ sur $B^*=M\backslash K$:
\begin{equation}\int {\delta\left(M u(g_n\cdots g_1),Z_2\right) \,d\mu(g_1)\cdots d\mu(g_n)}\leq \rho_2^n\label{conv2}\end{equation}

\label{convkak}\end{theorem}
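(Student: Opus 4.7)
The plan is to reduce, via Lemma \ref{furstprojective} and Definition \ref{distfront}, to proving an analogous statement in each proximal irreducible rational representation $(\rho,V)$ of $G$: namely, the projective point $[\rho(k(x_n))e_1]\in P(V)$ converges exponentially in $L^1$ to the limit $Z$ supplied by Theorem \ref{convdirection}, where $e_1$ denotes the first vector of the orthonormal Mostow basis $F$ of Proposition \ref{mostowcons}. Since the embedding of $B$ into $\prod P(V_i)$ sends $k(x_n)M$ to $([\rho_i(k(x_n))v_{\rho_i}])_i$ with each $v_{\rho_i}$ proportional to the first Mostow vector, and since there are only finitely many such representations, it suffices to obtain the claim for a single $(\rho,V)$.

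The heart of the argument is the following geometric inequality, which is where the Euclidean structure of $V$ plays a crucial role (and why the proof is restricted to $\RR$). Writing $\rho(x_n)=\rho(k(x_n))D_n\rho(u(x_n))$ in the Mostow basis, with $D_n=\mathrm{diag}(a_1,\ldots,a_{\dim V})$ and $|a_1|\geq|a_i|$, the orthogonality of $\rho(k(x_n))$ preserves the Fubini--Study distance, and an explicit computation of $\delta([D_n w],[e_1])$ with $w=\rho(u(x_n))v$ gives, for every unit vector $v\in V$:
\begin{equation*}
\delta\!\left(\rho(x_n)\cdot[v],\,[\rho(k(x_n))e_1]\right)\ \leq\ \frac{a_2(\rho(x_n))}{a_1(\rho(x_n))}\cdot \frac{1}{|\langle v,\,\rho(u(x_n))^{-1}e_1\rangle|}.
\end{equation*}

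Both factors must be controlled in a small $\epsilon_0$-th moment. For $a_2/a_1$, Proposition \ref{mostowcons} identifies it with $\|\wedge^2\rho(x_n)\|/\|\rho(x_n)\|^2$, and Lemma \ref{cocycle} applied to the contraction cocycle $s(g,([x],[y]))=\log\bigl(\delta(g[x],g[y])/\delta([x],[y])\bigr)$---exactly as in the proof of Theorem \ref{convdirection}, but now exploiting the conclusion to extract a moment of the contraction itself---yields $\epsilon_0,\rho_0\in(0,1)$ with $\EE[(a_2/a_1)^{\epsilon_0}]\leq \rho_0^n$ (the key input being the Lyapunov gap of Theorem \ref{interieur}). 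To neutralise the singular factor $|\langle v,\rho(u(x_n))^{-1}e_1\rangle|^{-1}$, I would \emph{average} the $\epsilon_0$-th power of the inequality over $v$ drawn from the rotation-invariant probability $\sigma$ on the unit sphere of $V$: by invariance, $\sup_{w\in S(V)}\int|\langle v,w\rangle|^{-\epsilon_0}\,d\sigma(v)<\infty$ for $\epsilon_0$ small, so Fubini gives
\begin{equation*}
\int_{S(V)}\EE\!\left[\delta^{\epsilon_0}\!\left(\rho(x_n)\cdot[v],\,[\rho(k(x_n))e_1]\right)\right]d\sigma(v)\ \leq\ C(\epsilon_0)\,\rho_0^n.
\end{equation*}

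To conclude, Theorem \ref{convdirection} together with Jensen's inequality gives $\EE[\delta^{\epsilon_0}(\rho(x_n)\cdot[v],Z)]\leq \rho^{n\epsilon_0}$ uniformly in $v$. Subadditivity of $x\mapsto x^{\epsilon_0}$ on $[0,1]$ yields, for every $v$,
\begin{equation*}
\delta^{\epsilon_0}([\rho(k(x_n))e_1],Z)\leq \delta^{\epsilon_0}(\rho(x_n)\cdot[v],[\rho(k(x_n))e_1])+\delta^{\epsilon_0}(\rho(x_n)\cdot[v],Z);
\end{equation*}
since the left-hand side is independent of $v$, averaging against $d\sigma(v)$ and taking expectations gives $\EE[\delta^{\epsilon_0}([\rho(k(x_n))e_1],Z)]\leq \tilde\rho^n$, hence $\EE[\delta([\rho(k(x_n))e_1],Z)]\leq \tilde\rho^n$ since $\delta\leq \delta^{\epsilon_0}$ on $[0,1]$. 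The statement for $Mu(y_n)$ follows from the same argument applied to the random walk with law $\mu^{-1}$, since inversion exchanges the $K$- and $u$-components of the $KA^+K$ decomposition up to $M$. The step I expect to be most delicate is the treatment of the singular factor: the integration trick over $\sigma$ is precisely what bypasses the need to analyse the distribution of $\rho(u(x_n))^{-1}e_1$, which would otherwise demand the kind of regularity of the stationary measure only established later in Section \ref{sechausdorff}.
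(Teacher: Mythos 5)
Your proposal is correct, and it takes a genuinely different route from the paper. The paper adapts the Goldsheid--Margulis projection-operator argument: working with $Q_{e_j}(n)=Q_{k(x_n)\cdot e_j}$, it shows $\EE\bigl[\|Q_{e_1}(n+1)-Q_{e_1}(n)\|^\epsilon\bigr]$ decays exponentially via the resolution of identity $\sum_j Q_{e_j}(n)=\mathrm{Id}$ and a two-sided comparison of $\|x_{n+1}^t x\|$ (upper bound $\|g_{n+1}\|\,a_j(n)$, lower bound $a_1(n+1)\|y_n\|$), then concludes by completeness; Theorem \ref{convdirection} is never used. You instead derive the explicit pointwise inequality $\delta(\rho(x_n)\cdot[v],[\rho(k(x_n))e_1])\leq \bigl(\sup_{j\geq 2}a_j/a_1\bigr)\,|\langle v,\rho(u(x_n))^{-1}e_1\rangle|^{-1}$ and then kill the singular factor by integrating over $v\sim\sigma$, exploiting rotation invariance of $\sigma$ so that $\int|\langle v,w\rangle|^{-\epsilon_0}\,d\sigma(v)$ is a finite constant independent of $w$; after that, Theorem \ref{convdirection} (already established in the paper) does the rest. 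Both proofs ultimately rest on the same large-deviation bound for $\sup_{j\geq 2}a_j(n)/a_1(n)$ (Proposition \ref{propratio}), so neither is truly more elementary, but they differ in what they rely on: the paper's argument is self-contained and shows Cauchyness of $[k(x_n)e_1]$ directly, while yours is shorter conditional on Theorem \ref{convdirection} and the averaging step is a slick substitute for the paper's resolution-of-identity calculus. Two small presentation points: since only $|a_1|\geq |a_i|$ for $i\neq 1$ is guaranteed by Proposition \ref{mostowcons}, the ratio should be written as $\sup_{j\geq 2}a_j/a_1$ (equivalently $\|\bigwedge^2\rho(x_n)\|/\|\rho(x_n)\|^2$) rather than $a_2/a_1$; and when you invoke the triangle inequality for $\delta$ you are implicitly using that the Fubini--Study quantity is indeed a metric, which holds but is worth a word.
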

\begin{remarque} Comme pour tout $n\in \NN^*$, les $n$-uplets $(g_1,\cdots, g_n)$ et $(g_n,\cdots, g_1)$ ont m\^{e}me loi, alors la variable $Mu(x_n)=Mu(g_1\cdots g_n)$ converge en loi. La remarque \ref{bonnerem} implique qu'elle ne peut pas converger presque s\^{u}rement, la mesure stationnaire $\nu$ \'{e}tant non atomique (voir \cite{bougerol}). \label{hehe}\end{remarque}
\begin{proof}
Le Lemme \ref{furstprojective} permet de ramener l'\'{e}tude \`{a} celle d'une repr\'{e}sentation irr\'{e}ductible et proximale.   Soit  donc $(\rho,V)$ une telle repr\'{e}sentation de $G$, $F=(e_1,\cdots, e_d)$ la base donn\'{e}e par le Corollaire \ref{mostowcons} et $\langle \cdot, \cdot \rangle$ le produit scalaire donn\'{e} par le m\^{e}me corollaire. On rappelle que $\rho(K)$ agit par matrices orthogonales et $\rho(A)$ est constitu\'{e} de matrices diagonales.  Il suffit de d\'{e}montrer qu'il existe des variables al\'{e}atoires $Z_1\in P(V)$ et $Z_2\in P(V^*)$ et un r\'{e}el $\rho\in ]0,1[$ tel que:
\begin{equation}\EE \left( \delta(k(x_n)\cdot[e_1],Z_1)^\epsilon \right) \leq \rho ^n\;\;\;\;;\;\;\;\;\EE \left( \delta(u(y_n)^{-1}\cdot[e_1^*],Z_2)^\epsilon \right) \leq \rho ^n\label{conv3}\end{equation}
Pour tout $x\in V$, notons $Q_x$ la projection orthogonale sur la droite $\RR x$. Pour simplifier les notations, nous \'{e}crivons,  pour tout $n\in \NN^*$ et tout $j\in \{1,\cdots, d\}$,   $Q_{e_j}(n)$ au lieu de $Q_{ k(x_n)\cdot e_j}$. Nous allons montrer que pour tout $\epsilon>0$ assez petit:
\begin{equation}
\displaystyle \limsup_{n\rightarrow \infty}\big[\EE(||Q_{e_1}(n+1) - Q_{e_1}(n)||^\epsilon)\big]^{\frac{1}{n}}< 1\label{ch2yara}\end{equation}
La compl\'{e}tude de l'espace projectif et l'in\'{e}galit\'{e} $||Q_x - Q_y||\geq \frac{1}{2}\delta([x],[y])$ vraie pour tout $x,y\in V$ permettent d'obtenir la premi\`{e}re in\'{e}galit\'{e} de (\ref{conv3})  \`{a} partir de (\ref{ch2yara}). La deuxi\`{e}me se prouve de fa\c{c}on similaire en regardant l'action de $G$ sur le dual de $V$.
Comme $\sum_{j=1}^d {Q_{e_j}(n)}$ est l'op\'{e}rateur identit\'{e} pour tout entier $n$, alors le calcul ci-dessous  ram\`{e}ne l'\'{e}tude de l'esp\'{e}rance de l'estim\'{e}e (\ref{ch2yara}) \`{a} celle de deux esp\'{e}rances:
\begin{eqnarray}
||Q_{e_1}(n+1) - Q_{e_1}(n)|| & \leq & \sum_{j\neq 1} { ||Q_{e_1}(n+1)\circ Q_{e_j}(n)||+ || Q_{e_j}(n+1) \circ Q_{e_1}(n)|| }\nonumber\\
& \leq& \sum_{j\neq 1} { ||Q_{e_1}(n+1) \circ Q_{e_j}(n)||+ || Q_{e_1}(n) \circ Q_{e_j}(n+1) || } \label{bih}\end{eqnarray}
(\ref{bih}): si $p$ et $q$ sont deux projections orthogonaux de $V$ alors $||p \circ q||=||q \circ p||$. \\
Soient $j\neq 1$,  $x\in \RR k(x_n)\cdot e_j$ de norme $1$ et  $y_n=Q_{e_1}(n+1)(x)$. Pour simplifier les notations, notons $x_n=g_1\cdots g_n$ au lieu de $\rho(x_n)$ et $\rho\left( a(x_n) \right)=diag\left(a_1(n),\cdots, a_d(n)\right)$ la matrice  diagonale  \'{e}crite dans la base $F$.  Evaluons  $||x_{n+1}^t x||$ de deux fa\c{c}ons diff\'{e}rentes. D'une part,
 \begin{equation}||x_{n+1}^t  x||=||g_{n+1}^t \cdots g_1^t  x||\leq ||g_{n+1}|| \smallskip||x_n^tx||= ||g_{n+1}||\smallskip a_j(n)\label{ch21}\end{equation}
D'autre part, $\langle x_{n+1}^t  (x-y_n), x_{n+1}^t  y_n\rangle = 0$ car
$x_{n+1} x _{n+1}^ t \cdot y_n \in k(x_{n+1})\cdot [e_1]$. D'o\`{u},
\begin{equation}\label{ch22}||x_{n+1}^t   x||=\sqrt{||x_{n+1}^t   y_n||^2+||x_{n+1}^t   (x-y_n)||^2} \geq
||x_{n+1}^t  y_n||=a_1(n+1)||y_n||\end{equation}
En combinant (\ref{ch21}) et (\ref{ch22}) et l'\'{e}galit\'{e} $||a_1(n)||=||x_n||$, nous obtenons:
\begin{equation}||Q_{e_1}(n+1)Q_{e_j}(n)|| \leq ||x_{n+1}|| \;\frac{a_j(n)}{a_1(n+1)} \leq ||x_{n+1}||  ||x_{n+1}^{-1}|| \;\frac{a_j(n)}{a_1(n)} \label{ch2badama3}\end{equation}
En appliquant la Proposition \ref{propratio} suivante, l'hypoth\`{e}se de moment exponentiel puis l'in\'{e}galit\'{e} de Cauchy-Schwartz \`{a} (\ref{ch2badama3}), nous obtenons la d\'{e}croissance exponentielle de la quantit\'{e} $\EE \left( ||Q_{e_1}(n+1)Q_{e_j}(n)||^\epsilon \right)$ pour $\epsilon$ assez petit. \\
Pour majorer l'esp\'{e}rance de $||Q_{e_1}(n)Q_{e_j}(n+1)||$, il suffit d'appliquer le m\^{e}me raisonnement \`{a} $||x_n^t x|| \leq ||g_{n+1}^{-1}||\;||x_{n+1}^t \cdot x||$ avec $x\in \RR k(x_{n+1})\cdot e_j$. \\
La premi\`{e}re in\'{e}galit\'{e} de (\ref{conv3}) est d\'{e}montr\'{e}e.    \end{proof}
La proposition suivante est une  version en esp\'{e}rance du Th\'{e}or\`{e}me \ref{interieur}.
\begin{proposition}\label{propratio}
On consid\`{e}re les m\^{e}mes hypoth\`{e}ses et notations du Th\'{e}or\`{e}me \ref{indasymp}. Soit $(\rho,V)$ une repr\'{e}sentation irr\'{e}ductible, rationnelle et proximale de $G$. On note $\rho\left(a(x_n)\right) = diag\left( a_1(n),\cdots, a_d(n)\right)$ dans la base $F=(e_1,\cdots, e_d)$ du Corollaire \ref{mostowcons}. Alors, pour tout $j\neq 1$:  $$ \underset{n \rightarrow +\infty}{\limsup} \frac{1}{n} \log \EE \left(\frac{a_j(n)}{a_1(n)}\right)< 0$$\end{proposition}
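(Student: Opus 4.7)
The plan is to apply the cocycle lemma (Lemma \ref{cocycle}) to a carefully chosen cocycle on $P(V)\times P(\bigwedge^2 V)$ and then recover the operator norm $\|\bigwedge^2\rho(x_n)\|$ by summing over an orthonormal basis. By Proposition \ref{mostowcons},
$$\frac{a_j(n)}{a_1(n)} \leq \max_{i\neq 1}\frac{a_i(n)}{a_1(n)} = \frac{\|\bigwedge^2\rho(x_n)\|}{\|\rho(x_n)\|^2},$$
so it suffices to prove the analogue of the desired estimate with this last quantity in place of $a_j(n)/a_1(n)$.

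On $X=P(V)\times P(\bigwedge^2 V)$, equipped with the diagonal $G$-action, I would consider the additive cocycle
$$s\bigl(g,([v],[\omega])\bigr) = \log\frac{\|(\bigwedge^2\rho(g))\,\omega\|}{\|\omega\|} - 2\log\frac{\|\rho(g)\,v\|}{\|v\|}.$$
The exponential moment hypothesis on $\mu$, combined with the inequality $\|\rho(g)^{-1}\| \leq \|\rho(g)\|^{\dim V-1}$ (valid since $\rho(g)\in\SL(V)$: a semi-simple group has no non-trivial rational character), supplies the moment condition of Lemma \ref{cocycle} for $r(g)=\sup_{([v],[\omega])}|s(g,([v],[\omega]))|$. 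The parameter $l$ of that lemma satisfies
$$l \leq \limsup_n\tfrac{1}{n}\EE\bigl(\log\|\bigwedge^2\rho(x_n)\|\bigr) - 2\liminf_n\tfrac{1}{n}\inf_{[v]}\EE\bigl(\log\|\rho(x_n)\,v\|/\|v\|\bigr) = \lambda_2(\rho)-\lambda_1(\rho),$$
by the definition of Lyapunov exponents and the uniform-in-$[v]$ convergence $\frac{1}{n}\EE(\log\|\rho(x_n)\,v\|/\|v\|)\to\lambda_1(\rho)$ (see \cite[Chap.~III, Cor.~3.4]{bougerol}, invoked in the proof of Theorem \ref{convdirection}). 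Theorem \ref{interieur} then gives $l<0$.

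Lemma \ref{cocycle} produces $\epsilon\in(0,1)$ and $\rho_0\in(0,1)$ such that $\sup_{([v],[\omega])}\EE\bigl(e^{\epsilon s(x_n,([v],[\omega]))}\bigr)\leq \rho_0^n$ for $n$ large. Fixing a unit $v_0\in V$ and a $\|\cdot\|$-orthonormal basis $(\omega_i)$ of $\bigwedge^2 V$, summing this bound over $i$ at the pairs $(v_0,\omega_i)$, and invoking $\|\bigwedge^2\rho(x_n)\|^\epsilon\leq \sum_i\|(\bigwedge^2\rho(x_n))\,\omega_i\|^\epsilon$ (from $\|A\|\leq \sum_i\|A\omega_i\|$ for an orthonormal basis together with the subadditivity of $t\mapsto t^\epsilon$ for $\epsilon\leq 1$) and $\|\rho(x_n)\,v_0\|\leq \|\rho(x_n)\|$, I would obtain $\EE\bigl((\|\bigwedge^2\rho(x_n)\|/\|\rho(x_n)\|^2)^\epsilon\bigr)\leq C\rho_0^n$. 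Since $a_j(n)/a_1(n)\in[0,1]$ and $\epsilon\leq 1$ imply $(a_j/a_1)^\epsilon\geq a_j/a_1$, the claim follows at once.

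The main obstacle I anticipate is bookkeeping rather than conceptual: verifying the moment condition of Lemma \ref{cocycle} requires controlling $\log\|\rho(g)^{-1}\|$, which is not included in Definition \ref{defimoment} and must be handled via the $\SL(V)$-inequalities above, and the uniformity in $[v]$ of $\frac{1}{n}\EE(\log\|\rho(x_n)\,v\|)\to\lambda_1(\rho)$ is what actually makes $l<0$, crucially relying on the irreducibility and proximality of $\rho$.
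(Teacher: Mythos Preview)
Your proof is correct and follows essentially the same route as the paper: bound $a_j(n)/a_1(n)$ by $\|\bigwedge^2\rho(x_n)\|/\|\rho(x_n)\|^2$ via Proposition~\ref{mostowcons}, apply the cocycle lemma (Lemma~\ref{cocycle}) to the cocycle $s(g,([v],[\omega]))=\log\frac{\|(\bigwedge^2\rho(g))\omega\|\,\|v\|^2}{\|\rho(g)v\|^2\,\|\omega\|}$ on $P(V)\times P(\bigwedge^2 V)$, show $l\leq\lambda_2(\rho)-\lambda_1(\rho)<0$ using the uniform convergence from \cite[Chap.~III, Cor.~3.4]{bougerol} and Theorem~\ref{interieur}, and finally pass from the $\epsilon$-moment to the first moment using $a_j/a_1\leq 1$. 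Your handling of the moment hypothesis via $\|\rho(g)^{-1}\|\leq\|\rho(g)\|^{\dim V-1}$ (since $\rho(G)\subset\SL(V)$) is a nice explicit detail the paper leaves implicit, and your summation over an orthonormal basis of $\bigwedge^2 V$ is a clean variant of the paper's supremum over basis vectors. One small over-attribution: the uniform convergence $\tfrac{1}{n}\EE(\log\|\rho(x_n)v\|)\to\lambda_1(\rho)$ requires only (strong) irreducibility; proximality enters only through Theorem~\ref{interieur} to guarantee $\lambda_1(\rho)>\lambda_2(\rho)$.
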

\begin{proof}
D'apr\`{e}s le Corollaire \ref{mostowcons}, l'in\'{e}galit\'{e} suivante $\frac{a_j(n)}{a_1(n)} \leq  \frac{||\bigwedge^2 x_n||}{||x_n||^2}$ est v\'{e}rifi\'{e}e pour tout $j\neq 1$. Donc il existe $C>0$ tel que pour tout $\epsilon>0$,
\begin{equation}\EE \left( \Big[\frac{a_j(n)}{a_1(n)}\Big]^\epsilon \right) \leq C\underset{i\in \{1,\cdots, d\}}{\sup} \EE \left( \Big[\frac{||\bigwedge^2 x_n e_i ||}{||x_n||^2}\Big]^\epsilon \right)\label{interm}\end{equation}
On consid\`{e}re \`{a} pr\'{e}sent le cocycle $s$ d\'{e}fini sur $G \times (P(V)\times P(V) )$ par:
$s\left(g,([x],[y]]\right)= \log \frac{||\bigwedge^2 g\cdot x||\;||y||^2}{||g\cdot y||^2\;||x|| }$. L'hypoth\`{e}se du Lemme \ref{cocycle} de cocycle est v\'{e}rifi\'{e}e car $\rho(\mu)$ a un moment exponentiel (D\'{e}finition/Proposition \ref{defimoment}). Avec les notations du Lemme \ref{cocycle} de cocyle, $l\leq \lambda_2(\rho) - \lambda_1(\rho)$. Par le Th\'{e}or\`{e}me \ref{interieur}, $l<0$. La conclusion du lemme permet d'obtenir la d\'{e}croissance exponentielle de (\ref{interm}) pour $\epsilon>0$ assez petit. Comme $a_j(n)/a_1(n) < 1$, alors le r\'{e}sultat est vrai pour tout $\epsilon>0$.   \end{proof}

Le Th\'{e}or\`{e}me \ref{indasymp} d\'{e}coule imm\'{e}diatement du Lemme \ref{lemmeind} et du Th\'{e}or\`{e}me \ref{convkak} en posant $Y_1=B$, $Y_2=B^*$, $\alpha (g)=k(g)M$ et $\beta(g)=M u(g)$ pour tout $g\in G$.

\section{Une preuve simple de la positivit\'{e} de la dimension de Hausdorff de la mesure stationnaire}\label{sechausdorff}

Le but de ce paragraphe est de donner une nouvelle preuve du Th\'{e}or\`{e}me \ref{hausdorff}. On rappelle que si $(X,\delta)$  est un espace m\'{e}trique et $\nu$ une mesure de probabilit\'{e} bor\'{e}lienne sur $X$ alors la dimension de Hausdorff de $\nu$ est d\'{e}finie par  $\HD(\nu)=\inf \{\HD(A); \nu(A)=1\}$. Ici $\HD(A)$ est la dimension de Hausdorff classique du bor\'{e}lien $A$.  Nous renvoyons \`{a} \cite{mattila} et \cite{ledrappier} pour les d\'{e}tails.

Le Th\'{e}or\`{e}me \ref{hausdorff} d\'{e}coulera du r\'{e}sultat suivant:
\begin{theorem}
Soit $G$ un groupe lin\'{e}aire alg\'{e}brique r\'{e}el
semi-simple et sans facteurs compacts et $(\rho,V)$ une
repr\'{e}sentation irr\'{e}ductible proximale de $G$. On consid\`{e}re
un sous-groupe Zariski dense $\Gamma$ de $G$,  muni d'une mesure
de probabilit\'{e} adapt\'{e}e $\mu$
 (D\'{e}finition \ref{defgen}) et ayant un moment exponentiel (D\'{e}finition \ref{defimoment}). Soit $\nu$ l'unique mesure de
 probabilit\'{e} $\mu$-invariante sur l'espace projectif $P(V)$ (Th\'{e}or\`{e}me \ref{unicite}).  Alors il existe $C>0$, $\alpha>0$  et $\epsilon_0>0$ tels que pour tout $\epsilon \in [0,\epsilon_0]$:
\begin{equation}\sup \{\nu \{ [x]\in P(V); \delta(x, H) \leq \epsilon \}; \textrm{$H$ hyperplan de $V$}\} \leq C\epsilon^\alpha\label{aprouver}\end{equation}En particulier, $\HD(\nu)\geq \alpha$.
Ici,  $\delta$ est la distance de Fubini-Study sur l'espace projectif $P(V)$ (D\'{e}finition \ref{distfront}).

\label{hausdorff1}\end{theorem}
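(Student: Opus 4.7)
The plan is first to deduce $\HD(\nu)\geq\alpha$ from (\ref{aprouver}) via the mass distribution principle: for any $[x_0]\in P(V)$, pick any projective hyperplane $H$ containing $[x_0]$; then $B([x_0],\epsilon)\subset H_\epsilon$, so $\nu(B([x_0],\epsilon))\leq C\epsilon^\alpha$, and Frostman's lemma yields $\HD(\nu)\geq\alpha$. The whole work is therefore in proving (\ref{aprouver}).

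For (\ref{aprouver}), fix a hyperplane $H=\ker\phi$ with $\|\phi\|=1$ so that $\delta([x],H)=|\phi(x)|/\|x\|$, and choose a deterministic point $[y_0]\in P(V)$ with $\delta([y_0],H)\geq\tfrac12$ (which exists since $H$ is a proper subspace). The strategy is to compare $Z$ with the sample $x_n\cdot[y_0]$. By Theorem \ref{convdirection}, $\EE\delta(x_n\cdot[y_0],Z)\leq\rho^n$, so by Markov $\P(\delta(x_n\cdot[y_0],Z)>\epsilon)\leq\rho^n/\epsilon$; hence
\[
\nu(H_\epsilon)=\P(Z\in H_\epsilon)\leq \rho^n/\epsilon+\P(x_n\cdot[y_0]\in H_{2\epsilon}).
\]
The task reduces to a polynomial-in-$\epsilon$ bound, uniform in $\phi$, on the second probability.

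For this, write $|\phi(x_n y_0)|=|(x_n^T\phi)(y_0)|=\|x_n^T\phi\|\cdot\|y_0\|\cdot\delta\bigl([y_0],\ker(x_n^T\phi)\bigr)$ and $\|x_n y_0\|\leq\|x_n\|\cdot\|y_0\|$, obtaining
\[
\frac{|\phi(x_n y_0)|}{\|x_n y_0\|}\ \geq\ \frac{\|x_n^T\phi\|}{\|x_n\|}\cdot\delta\bigl([y_0],\ker(x_n^T\phi)\bigr).
\]
Markov of order $\alpha>0$ then yields
\[
\P(x_n\cdot[y_0]\in H_{2\epsilon})\ \leq\ (2\epsilon)^\alpha\,\EE\Bigl(\bigl(\|x_n\|/\|x_n^T\phi\|\bigr)^\alpha\cdot\delta\bigl([y_0],\ker(x_n^T\phi)\bigr)^{-\alpha}\Bigr).
\]
The first factor is controlled by the crude bound $\|x_n\|/\|x_n^T\phi\|\leq\|x_n\|\|x_n^{-1}\|$, and the exponential moment hypothesis on $\mu$ gives $\EE((\|x_n\|\|x_n^{-1}\|)^\alpha)\leq e^{nC_0(\alpha)}$ where $C_0(\alpha)\to 0$ as $\alpha\to 0^+$. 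For the second factor, the random hyperplane $\ker(x_n^T\phi)$ arises as the image of the initial $[\phi]$ under the dual walk; since the contragredient representation $(\rho^*,V^*)$ is again irreducible and proximal, Theorem \ref{convdirection} applied to it ensures that $\ker(x_n^T\phi)$ converges exponentially fast to a random point of $P(V^*)$ of law $\nu^*$, uniformly in the starting $[\phi]$, which controls $\delta([y_0],\ker(x_n^T\phi))^{-\alpha}$ in expectation, uniformly in $\phi$.

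The main obstacle is precisely this uniformity in $\phi$: for $\phi$ aligned with the limiting contracting direction of the dual walk, $\|x_n^T\phi\|$ may degenerate and the bound via $\|x_n\|\|x_n^{-1}\|$ is wasteful. The resolution is that the two factors compensate: when $\|x_n^T\phi\|/\|x_n\|$ is small, $\ker(x_n^T\phi)$ is nevertheless close to the generic limit $Z^*\sim\nu^*$ by the dual contraction, so $\delta([y_0],\ker(x_n^T\phi))$ remains bounded away from zero in expectation. Combining the two estimates, balancing with $n\sim-\log\epsilon/\log(1/\rho)$, and choosing $\alpha$ sufficiently small so that $C_0(\alpha)<\log(1/\rho)$, yields $\nu(H_\epsilon)\leq C\epsilon^{\alpha'}$ for some explicit $\alpha'>0$, completing the proof of (\ref{aprouver}).
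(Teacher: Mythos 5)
Your mass-distribution reduction is fine, and so is the initial Markov step that converts the problem to bounding $\PP(x_n\cdot[y_0]\in H_{2\epsilon})$. You also correctly identified the central obstacle (uniformity over $\phi$), but the resolution you propose does not close the gap, and as written the argument is circular.

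The problematic step is the claim that Theorem \ref{convdirection} ``controls $\delta([y_0],\ker(x_n^T\phi))^{-\alpha}$ in expectation, uniformly in $\phi$.'' Theorem \ref{convdirection} bounds $\EE(\delta(\cdot,Z))$, a positive moment; it gives no information on negative moments. Controlling $\EE\big(\delta([y_0],\ker(x_n^T\phi))^{-\alpha}\big)$ uniformly in $\phi$ and $n$ is essentially equivalent (by duality) to the H\"older regularity you are trying to prove. The heuristic that ``the two factors compensate'' is not substantiated: when $\|x_n^T\phi\|/\|x_n\|$ is small, nothing prevents $\ker(x_n^T\phi)$ from simultaneously passing near $[y_0]$, and no independence structure lets H\"older or Cauchy--Schwarz close the estimate without a separate quantitative input. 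There is also a subsidiary confusion: $x_n^T\phi=X_n^T\cdots X_1^T\phi$ is a \emph{left} walk in $P(V^*)$, which converges in law but not almost surely (Remark \ref{hehe}), so Theorem \ref{convdirection} (a statement about the right walk) does not apply to it directly.

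The paper sidesteps both issues by replacing the single $y_0$ with the fixed orthonormal basis $(e_1,\dots,e_d)$. From $\|\rho(x_n)^t y\|^2=\sum_i|\langle x_n e_i,y\rangle|^2$, for almost every $\omega$ there is a pointwise choice $i(\omega)$ with
\[
\delta\big(x_n\cdot e_{i(\omega)},H\big)\;\geq\;\tfrac{1}{\sqrt d}\,\frac{\|\rho(x_n)^t y\|}{\|\rho(x_n)\|},
\]
which carries no bad $\delta$-factor at all, so there is nothing to compensate. The probability that the remaining ratio $\|\rho(x_n)^t y\|/\|\rho(x_n)\|$ is exponentially small is then bounded uniformly in $y$ by Proposition \ref{GDV}, a large-deviation estimate for norm ratios proved via the cocycle lemma, which requires no prior regularity of any stationary measure. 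If you replace your single $y_0$ by the basis and run the Pigeonhole step, your argument collapses to the paper's.
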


\begin{proof}[Preuve du Th\'{e}or\`{e}me \ref{hausdorff} modulo le Th\'{e}or\`{e}me \ref{hausdorff1}:]
 D'apr\`{e}s le Lemme \ref{furstprojective}, la fronti\`{e}re de Furstenberg est plong\'{e}e dans $\prod_{i=1}^r P(V_i)$
 avec les $(\rho_i,V_i)$ des repr\'{e}sentations irr\'{e}ductibles proximales de $G$.
 Il est clair que la projection de la mesure stationnaire $\nu$  sur chaque espace projectif $P(V_i)$ est aussi stationnaire.
 D'apr\`{e}s le Th\'{e}or\`{e}me \ref{unicite}, elle est unique. Ainsi le th\'{e}or\`{e}me pr\'{e}c\'{e}dent implique le
 Th\'{e}or\`{e}me \ref{hausdorff}.  \end{proof}

\paragraph{Idée de la preuve}
Dans sa preuve, Guivarc'h utilise des th\'{e}or\`{e}mes limites sur la d\'{e}composition d'Iwasawa
pour d\'{e}montrer  une assertion plus forte que
 (\ref{aprouver})\footnote{\`{a} savoir $\int {\frac{1}{\delta([x],H)^\alpha}\,d\nu([x])} < \infty$ uniform\'{e}ment en les hyperplans $H$.
 En termes de la th\'{e}orie g\'{e}om\'{e}trique de la mesure, cela implique que la $\alpha$-\'{e}nergie de $\nu$ est finie.}.
 Nous proposons une d\'{e}marche qui n'utilise  que la convergence exponentielle de la marche al\'{e}atoire $x_n$ en direction
 (Th\'{e}or\`{e}me \ref{convdirection}). L'id\'{e}e principale est d'approximer, pour tout $\omega\in \Omega$ appartenant \`{a}
 un ensemble de grande probabilit\'{e}, la distance  de $Z(\omega)$ \`{a} un hyperplan donn\'{e} $H$  par un rapport de normes.
  Il s'agira du quotient $\frac{||\rho\left(x_n(\omega)\right)^t   v(\omega)  ||}{||\rho\left(x_n(\omega)\right)||}$, o\`{u} $n$ est un
  entier assez grand,  $v(\omega)$  est un certain vecteur de $V$ variant al\'{e}atoirement parmi un ensemble fini de vecteurs d\'{e}terministes.
   Nous renvoyons aux in\'{e}galit\'{e}s (\ref{live}) et l'estim\'{e}e (\ref{te3ebt}).
   Le contr\^{o}le des rapports de normes sera fait gr\^{a}ce \`{a} la Proposition \ref{GDV},  version modifi\'{e}e du lemme de cocycle (Lemme \ref{cocycle}).
\newline
\begin{proof}[Preuve du Th\'{e}or\`{e}me \ref{hausdorff1}:]
Soient $\langle \cdot, \cdot \rangle$ le produit scalaire
 et $F=(e_1,\cdots,
e_d)$ la base orthonorm\'{e}e donn\'{e}s par le Corollaire
\ref{mostowcons}. On note $Z$ la variable al\'{e}atoire donn\'{e}e
par le Th\'{e}or\`{e}me \ref{convdirection}. L'in\'{e}galit\'{e}
(\ref{aprouver}) est \'{e}quivalente \`{a} prouver l'assertion
suivante:  pour tout $\rho\in ]0,1[$, il existe $\rho'\in ]0,1[$
et $n_0\in \NN$ tels que pour tous $n\geq n_0$ on a:
\begin{equation} \sup\{\PP \left( \delta(Z,H) \leq \rho^n \right); \textrm{$H$ hyperplan de $V$}\} \leq \rho'^n \label{bahebbakpapa}\end{equation}
Soit $H$ un hyperplan de $V$ et $y\in V$ un vecteur unitaire orthogonal \`{a} $H$. Alors $\delta(a,H)=\frac{|\langle a,y\rangle|}{||a||}$ pour tout $a\in V$. Comme $||\rho(x_n)^t y||= \sum_{i=1}^d {|\langle x_n \cdot e_i, y\rangle|^2}$, alors pour $\PP$-presque tout $\omega \in \Omega$,  il existe $i(\omega)\in \{1,\cdots, d\}$ tel que:
\begin{equation}\delta\left(x_n(\omega)\cdot e_{i(\omega)},H\right)\geq \frac{1}{\sqrt{d}} \frac{||\rho(x_n)^t\; y ||}{||\rho(x_n)||}\label{live}\end{equation}
Gr\^{a}ce au Th\'{e}or\`{e}me \ref{convdirection}, on en d\'{e}duit l'existence de $\rho_1,\rho_2\in ]0,1[$ tels que:
\begin{eqnarray}
\PP\left( \delta(Z,H) \leq \rho^n \right) &=& \sum_{i=1}^d {\PP\left( \delta(Z,H) \leq \rho^n; \;\mathds{1}_{i(\omega)=i}\right)}\nonumber\\
&\leq& \sum_{i=1}^d {\PP\left( \delta(x_n \cdot e_i, H)\leq \rho_1^n + \rho^n; \;\mathds{1}_{i(\omega)=i} \right)} + \rho_2^n \nonumber\\
& \leq & \sum_{i=1}^d {\PP\left( \frac{||\rho(x_n)^ty ||}{||\rho(x_n)||}\leq \sqrt{d}(\rho_1^n+\rho^n) \right)} + \rho_2^n \label{te3ebt}\end{eqnarray}
La proposition suivante appliqu\'{e}e \`{a} la mesure de probabilit\'{e} $\rho(\mu)^t$, loi de $\rho(g_1)^t$, permet de montrer que la quantit\'{e} pr\'{e}c\'{e}dente d\'{e}cro\^{\i}t exponentiellement vite. L'in\'{e}galit\'{e}  (\ref{bahebbakpapa}) est donc d\'{e}montr\'{e}e.   \end{proof}
\begin{proposition}(Grandes d\'{e}viations de rapports)\\
Soit  $\mu$ une mesure de probabilit\'{e} sur $\operatorname{SL}_d(\RR)$ dont le support engendre un groupe irr\'{e}ductible et d'adh\'{e}rence Zariski connexe. Alors pour tout $t\in ]0,1[$, $\underset{n\rightarrow +\infty}{\limsup}\, {\frac{1}{n} \log \underset{x,y\in V; ||x||=||y||=1}{\sup} \PP \left( \frac{||x_n x||}{||x_n y ||}\leq t^n \right)} < 0$. \label{GDV}\end{proposition}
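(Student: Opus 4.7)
The plan is to reduce the probability bound to a moment estimate via Chebyshev's inequality, then to control the moment through a cocycle argument. For any $\tau > 0$,
$$\PP\Big(\frac{\|x_n x\|}{\|x_n y\|} \le t^n\Big) \;=\; \PP\Big(\Big(\frac{\|x_n y\|}{\|x_n x\|}\Big)^{\!\tau} \ge t^{-n\tau}\Big) \;\le\; t^{n\tau}\; \EE\Big(\frac{\|x_n y\|}{\|x_n x\|}\Big)^{\!\tau},$$
so it suffices to exhibit a small $\tau > 0$ for which $\limsup_n \tfrac{1}{n} \log \sup_{\|x\|=\|y\|=1}\EE(\|x_n y\|/\|x_n x\|)^\tau < \tau|\log t|$.

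To estimate this moment, I would introduce on $G \times (P(V)^2 \setminus \Delta)$ the additive cocycle
$$s\bigl(g,([x],[y])\bigr) \;=\; \log\frac{\|gy\|\,\|x\|}{\|gx\|\,\|y\|},$$
which is indeed a cocycle (as the difference of two standard cocycles of the form $\log(\|g\cdot\|/\|\cdot\|)$ on $P(V)$); for unit vectors $x, y$ one has $e^{\tau s(x_n,\cdot)} = (\|x_n y\|/\|x_n x\|)^\tau$. The exponential moment of $\mu$ (implicit in the setting where the proposition is applied) guarantees the integrability hypothesis of Lemma \ref{cocycle}. Crucially, the drift $l = \lim_n \tfrac{1}{n} \sup \EE\,s(x_n,\cdot)$ vanishes, since $\tfrac{1}{n}\EE \log \|x_n v\|$ converges to $\lambda_1$ uniformly in $v$ on the unit sphere---a classical consequence of strong irreducibility, see for instance \cite[Chap.~III, Cor.~3.4]{bougerol}; recall that the hypothesis of a connected Zariski closure reduces irreducibility to strong irreducibility.

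Because $l = 0$, the first version of Lemma \ref{cocycle} does not apply; one must invoke the second version mentioned in the footnote to that lemma, which handles this critical case and yields a quadratic bound on exponential moments near $\tau = 0$: there exist $\tau_0 > 0$ and $C > 0$ such that, for all $0 < \tau \le \tau_0$ and all $n$ large enough,
$$\sup_{([x],[y])}\; \EE\, e^{\tau s(x_n,([x],[y]))} \;\le\; e^{C\tau^2 n}.$$
Substituting into the Chebyshev bound yields $\PP(\|x_n x\|/\|x_n y\| \le t^n) \le \exp[n(\tau\log t + C\tau^2)]$; since $\log t < 0$, choosing $\tau > 0$ small enough makes the bracket strictly negative, which gives the announced exponential decay uniformly over unit vectors $x$ and $y$.

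The principal obstacle is the $l = 0$ variant of the cocycle lemma itself. In the $l < 0$ regime Lemma \ref{cocycle} iterates a geometric contraction; at $l = 0$ the contraction is lost, and one must instead show that the centered cocycle has bounded oscillation together with enough ergodicity. The standard route is to study the transfer operator $P_\tau f([x],[y]) = \EE[e^{\tau s(g_1,\cdot)}\,f(g_1\cdot)]$ as an analytic perturbation of $P_0$ on a H\"older space, exploiting the quasi-compactness provided by strong irreducibility in the manner of Le Page; alternatively, one decomposes $s(x_n,\cdot)$ into a coboundary plus a square-integrable martingale and concludes by an Azuma-type concentration inequality.
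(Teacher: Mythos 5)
Your proof follows the paper's route exactly: Markov's inequality reduces the probability bound to an exponential moment estimate, the same cocycle $s\bigl(g,([x],[y])\bigr)=\log\frac{\|gy\|\,\|x\|}{\|gx\|\,\|y\|}$ on $G\times P(V)^2$ (your sign convention differs from the paper's, which is immaterial since $l=0$ makes the cocycle lemma symmetric in sign) has vanishing drift because $\frac{1}{n}\EE\log\|x_n u\|\to\lambda_1$ uniformly on the unit sphere by strong irreducibility \cite[Chap.~III, Cor.~3.4]{bougerol}, and the second ($l=0$) version of the cocycle lemma closes the argument. The only caveat is that Lemme~\ref{cocycle2} states the bound as $(1+\epsilon\gamma)^n$ for any $\gamma>0$ and $\epsilon$ small, not the quadratic form $e^{C\tau^2 n}$ you attribute to it; the Markov step goes through identically with either formulation, and your closing speculation on how to prove the $l=0$ lemma is unnecessary since the paper simply cites it from \cite[Lemme 4.12]{aoun}.
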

La proposition peut \^{e}tre d\'{e}montr\'{e}e gr\^{a}ce aux travaux de Le Page \cite{page} expliqu\'{e}s dans la Remarque \ref{LGN}. Une m\'{e}thode plus directe est le recours \`{a} une deuxi\`{e}me version du Lemme \ref{cocycle}.
\begin{lemme}(Lemme de cocycle: deuxi\`{e}me version)\cite[Lemme 4.12]{aoun} On consid\`{e}re la m\^{e}me situation que le Lemme \ref{cocycle}. Si $l=0$, alors   pour tous $\gamma>0$, il existe $\epsilon(\gamma)>0$, $n(\gamma)\in \NN^*$ tels que pour tous $0<\epsilon<\epsilon(\gamma)$ et  $n>n(\gamma)$, $Sup_{x\in X}\;\EE\big[\;exp[\;\epsilon\left(s(x_n,x) \right)\;]\; \big] \leq (1+\epsilon \gamma)^n$. \label{cocycle2}\end{lemme}
\begin{proof}[Preuve de la Proposition \ref{GDV}: ]
On consid\`{e}re l'action naturelle de $G$ sur $X=P(V) \times P(V)$ et $s$ le cocycle sur $G\times X$ d\'{e}fini par
$s\left(g,([x],[y])\right)= \log \frac{||g x||\;||y||}{||g y||\;||x||}$. L'hypoth\`{e}se d'irr\'{e}ductibilit\'{e} permet de montrer que $\frac{1}{n} \EE(\log||x_n u||)$ converge vers l'exposant de Lyapunov $\lambda_1$ uniform\'{e}ment en les vecteurs $u$ de la sph\`{e}re unit\'{e} \cite[Chapitre III, Corollaire 3.4]{bougerol}. Ainsi $l=0$ et on peut alors appliquer  le Lemme \ref{cocycle2}. On conclut gr\^{a}ce \`{a} l'in\'{e}galit\'{e} de Markov.    \end{proof}

\paragraph{Probl\`{e}me ouvert:}
 Soient $d,k\in \NN^*$. En combinant l'alternative de Tits forte de Breuillard, plus pr\'{e}cis\'{e}ment \cite[Corollaire 1.6]{strongtits},   et la minoration de l'exposant de Lyapunov d'une mesure de probabilit\'{e} par une fonction de son rayon spectral \cite[Th\'{e}or\`{e}me 1.19]{furman}, on d\'{e}duit  qu'il existe une borne uniforme  de l'exposant de Lyapunov. Plus pr\'{e}cis\'{e}ment, il existe $C=C(d,k)$ tel que $\lambda_1>C$ pour toute mesure de probabilit\'{e}  uniforme $\mu$ sur un ensemble de cardinal $k$ et engendrant un groupe $G_\mu$ non moyennable. Peut-on obtenir un r\'{e}sultat analogue sur la diff\'{e}rence des exposants de Lyapunov si on impose par exemple que $G_\mu$ est Zariski dense? Et par la suite,  existe-t-il une borne uniforme de la dimension de la mesure stationnaire?

\section{Une version probabiliste de l'alternative de Tits}
\label{secping}
Le but de cette partie est de d\'{e}montrer le Th\'{e}or\`{e}me \ref{titsproba}. \\
Dans toute cette section, $G$ est un groupe lin\'{e}aire
alg\'{e}brique r\'{e}el semi-simple et sans facteurs compacts.\\

Apr\`{e}s son utilisation cruciale par Tits dans la preuve de l'alternative qui porte son nom \cite{tits},  la  m\'{e}thode de Ping-Pong est devenue classique pour d\'{e}montrer qu'un groupe est libre.   En voici une des versions possibles:
\begin{lemme}(Lemme de Ping-Pong)
Soit $G$ un groupe agissant sur un ensemble $X$. On consid\`{e}re deux \'{e}l\'{e}ments $g$ et $h$ de $G$. On suppose qu'il existe
$\emptyset \varsubsetneqq V_g \subset X$ (resp. $V_{g^{-1}}$, $V_h$ et $V_{h^{-1}}$) et $H_g  \varsubsetneqq X$ (resp. $H_{g^{-1}}$, $H_h$ et $H_{h^{-1}}$) tels que:
\begin{enumerate}
\item $g \cdot (X \setminus H_g) \subset V_g$. Idem en rempla\c{c}ant $(V_g,H_g)$ respectivement par $(V_{g^{-1}},H_{g^{-1}})$, $(V_h, H_h)$, $(V_{h^{-1}}, H_{h^{-1}})$.
\item $V_g \cap  H_g= \emptyset $. Idem en rempla\c{c}ant $(V_g,H_g)$ respectivement par $(V_{g^{-1}},H_{g^{-1}})$, $(V_h, H_h)$, $(V_{h^{-1}}, H_{h^{-1}})$.
\item $V_{g^{\pm 1}} \cap  H_{h^{\pm 1}}= \emptyset $ et $V_{h^{\pm 1}} \cap  H_{g^{\pm 1}}= \emptyset $.
\item $V_g \cup H_g \neq X$, $V_{g^{-1}}\cup H_{g^{-1}}\neq X$ et $V_g \cup H_{h^{\pm 1}}\neq X$. Idem en permutant les r\^{o}les de $g$ et $h$. \end{enumerate}
Alors le groupe $\langle g, h \rangle $ engendr\'{e} par $g$ et $h$   est libre.\\

\noindent Quand la situation pr\'{e}c\'{e}dente se pr\'{e}sente, on dit que $g$ et $h$ jouent au ping-pong sur l'espace $X$. On appelle $V_g$ l'espace attractif de $g$ et $H_g$ l'espace r\'{e}pulsif. Idem pour $g^{-1}$, $h$ et $h^{-1}$.
\label{ping} \end{lemme}
Gr\^{a}ce \`{a} ce lemme, le Th\'{e}or\`{e}me \ref{titsproba} se d\'{e}duit
imm\'{e}diatement de l'\'{e}nonc\'{e} suivant:
\begin{theorem}
Soient $\Gamma_1$ et $\Gamma_2$ deux sous-groupes Zariski denses de $G$ munis chacun d'une mesure de probabilit\'{e} adapt\'{e}e ayant un moment exponentiel qu'on notera respectivement $\mu_1$ et $\mu_2$.  Soient $(g_i)_{i=1\in \NN^*}$ (resp.$(g'_i)_{i=1\in \NN^*}$) une suite de variables ind\'{e}pendantes de loi $\mu_1$ (resp. $\mu_2$). On suppose que les deux suites sont ind\'{e}pendantes entre elles. On note $x_n=g_1\cdots g_n$ et $x'_n=g'_1\cdots g'_n$ les marches al\'{e}atoires \`{a} droite respectivement associ\'{e}es. \\
Alors, avec probabilit\'{e} tendant vers $1$ de fa\c{c}on exponentielle, $x_n$ et $x'_n$ jouent au ping-pong sur la fronti\`{e}re de Furstenberg.  En particulier, presque s\^{u}rement, pour $n$ assez grand, le groupe $\langle x_n, x'_n\rangle $ engendr\'{e} par $x_n$ et $x'_n$ est libre.\label{titsproba1}\end{theorem}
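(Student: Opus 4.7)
L'objectif est de vérifier les hypothèses du Lemme \ref{ping} pour $x_n$ et $x'_n$ avec probabilité $\geq 1-\rho^n$, ce qui entraîne la liberté du groupe engendré. Par le Lemme \ref{furstprojective}, il suffit de travailler dans une représentation irréductible proximale $(\rho,V)$ de $G$; je fixe la base orthonormée $F=(e_1,\ldots,e_d)$ du Corollaire \ref{mostowcons}.

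\textbf{Données de ping-pong.} Pour $g=k(g)a(g)u(g)$, je pose $p(g):=k(g)\cdot[e_1]\in P(V)$ (point attractif) et $H(g):=u(g)^{-1}\cdot[e_1^\perp]$ (hyperplan répulsif). Un calcul direct dans la base $F$ donne l'estimée clé: si $\delta([v],H(g))\geq\eta$, alors $\delta(g\cdot[v],p(g))\leq (a_2(g)/a_1(g))/\eta$. Pour des échelles $0<\delta_n\leq\eta_n$ à ajuster, je définis, pour chaque $g\in\{x_n,x_n^{-1},x'_n,x'^{-1}_n\}$, l'ensemble attractif $V_g$ comme la $\delta_n$-boule ouverte autour de $p(g)$ et l'ensemble répulsif $H_g$ comme le $\eta_n$-voisinage de $H(g)$. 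Les conditions du Lemme \ref{ping} sont alors entraînées par les trois assertions suivantes:
\begin{enumerate}
\item[(A)] $a_2(g)/a_1(g)\leq \delta_n\eta_n$ pour chacun des quatre $g$;
\item[(B)] les quatre points $p(g)$ sont deux à deux à distance $\geq \delta_n+\eta_n$;
\item[(C)] pour chaque paire $(g,g')$ imposée par le Lemme \ref{ping}, $\delta(p(g),H(g'))\geq \delta_n+\eta_n$.
\end{enumerate}

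\textbf{Estimations probabilistes.} L'assertion (A) découle de la Proposition \ref{propratio} et de l'inégalité de Markov appliquée à $[a_2(g)/a_1(g)]^\epsilon$ pour un $\epsilon>0$ petit; les mesures $\mu_i$ et leurs images par $g\mapsto g^{-1}$ satisfont toutes aux hypothèses requises. Pour (B) et (C), j'approche l'indicatrice de l'événement mauvais par une fonction lipschitzienne $\phi_n:B\times B^*\to[0,1]$ de constante de Lipschitz $O(1/\eta_n)$. Pour toute paire intra-marche, par exemple $(p(x_n),H(x_n))$, le Théorème \ref{indasymp} donne
\[
\EE\bigl[\phi_n\bigl(k(x_n)M,Mu(x_n)\bigr)\bigr]\leq \int \phi_n\,d(\nu\otimes\nu^*) + \frac{\rho_1^n}{\eta_n} \leq C\eta_n^\alpha + \frac{\rho_1^n}{\eta_n},
\]
où la majoration de l'intégrale limite par $C\eta_n^\alpha$ provient du Théorème \ref{hausdorff1} (la $\nu$-mesure du $\eta_n$-voisinage d'un hyperplan de $P(V)$ est $\leq C\eta_n^\alpha$). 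Les paires inter-marches s'obtiennent de même en utilisant l'indépendance de $(g_i)$ et $(g'_i)$, et les paires impliquant un inverse en appliquant le même argument aux mesures images par $g\mapsto g^{-1}$ (qui satisfont aux mêmes hypothèses).

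\textbf{Choix des échelles et obstacle principal.} Un choix équilibrant les deux termes, par exemple $\eta_n\asymp\rho_1^{n/(\alpha+1)}$ et $\delta_n$ légèrement plus petit pour respecter (A), fournit une majoration exponentielle $O(\rho_1^{n\alpha/(\alpha+1)})$ de chacun des (finiment nombreux) événements mauvais; une borne d'union conclut. Le point délicat est le cas intra-marche de (C): les deux quantités $p(x_n)$ et $H(x_n)$ dépendent de la \emph{même} réalisation $(g_1,\ldots,g_n)$, et c'est précisément le Théorème \ref{indasymp} (indépendance asymptotique quantitative) qui permet de les traiter comme des variables indépendantes de lois $\nu$ et $\nu^*$. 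Cette indépendance qualitative se convertit en contrôle quantitatif grâce à la décroissance polynomiale fournie par le Théorème \ref{hausdorff1}; c'est l'interaction fine entre ces deux ingrédients qui forme le cœur de la preuve.
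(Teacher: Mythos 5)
Your proof is correct and follows essentially the same route as the paper. The paper organizes the verification of the ping-pong conditions into three Propositions (\ref{proposition1}: contraction via la Proposition \ref{propratio} and the singular-value lemma; \ref{proposition2}: proximalité via le Théorème \ref{indasymp} combiné au Théorème \ref{hausdorff1}; \ref{proposition3}: position générale via le Théorème \ref{convkak}, l'indépendance réelle des deux marches, et le Théorème \ref{hausdorff1}), and your (A) and (C) reproduce precisely these ingredients with the same roles — Markov plus Proposition \ref{propratio} for (A); asymptotic independence converted into a quantitative bound by Hölder regularity for the intra-walk case of (C); genuine independence of $(g_i)$ and $(g'_i)$ plus \ref{hausdorff1} for the inter-walk case. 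Your condition (B) on pairwise separation of the attracting points is superfluous: in the paper, condition 4 of the ping-pong lemma (non-covering of $B$ by $V_g\cup H_g$, etc.) is satisfied automatically once the $V_g$ are $\rho^n$-balls and the $H_g$ are $\rho^n$-tubes, since such sets cannot cover a projective variety of positive dimension — no separate estimate is needed. One small point worth making explicit in the inter-walk case: the distribution of $Mu(x'_n)$ converges to $\nu^*$ but $Mu(x'_n)$ itself does not converge a.s.; one passes through the time-reversal $(g'_1,\ldots,g'_n)\mapsto(g'_n,\ldots,g'_1)$, under which the law is preserved and independence from $(g_i)$ is retained, so that both arguments of $\phi_n$ converge exponentially a.s. to independent limits of laws $\nu$ and $\nu^*$.
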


\begin{remarque} Dans \cite{Guivarch3}, Guivarc'h d\'{e}montre (pour $\Gamma_1=\Gamma_2$ et $\mu_1=\mu_2$) qu'il existe presque s\^{u}rement des sous-suites $n_{k},n'_{k}$ et des entiers $p_k,p'_k$ tels que $\langle {x_{n_k}}^{p_k},{x'_{n'_k}}^{p'_k}\rangle$ est libre pour $k$ assez grand . Ceci est d\'{e}j\`{a} une preuve probabiliste de l'alternative de Tits.  Un point crucial pour  arriver \`{a} se d\'{e}barrasser de ces sous-suites et de ces entiers est d'utiliser la d\'{e}composition $KA^+K$ pour avoir la contraction, et non les valeurs propres. Un autre point important est la proximalit\'{e}, i.e. l'estimation de la distance entre les espaces attractifs et r\'{e}pulsifs d'une m\^{e}me marche. Ce point d\'{e}licat  est une cons\'{e}quence de l'ind\'{e}pendance asymptotique. \end{remarque}
\paragraph{Notations}
Soit $B=G/P$ la fronti\`{e}re de Furstenberg de $G$ (voir la D\'{e}finition \ref{deffront}) et $B \hookrightarrow \prod_{i=1}^r P(V_i)$, $B^*\hookrightarrow \prod_{i=1}^r P(V_i^*)$ les plongements de $B$ et $B^*$ dans des produits d'espaces projectifs de repr\'{e}sentations irr\'{e}ductibles et proximales (voir le Lemme \ref{furstprojective}).\\
Pour tout $y\in B^*$, on note $$\ker(y)=\{x=([x_1],\cdots,[x_r])\in \prod_{i=1}^r P(V_i); \exists i\in \{1,\cdots, r\}; x_i\in \ker(y_i)\}$$
Pour tout $g\in G$, on note $v_{g}=k(g)M\in B$ et
$\widetilde{H_g} = \ker \left( M u(g) \right)\in\prod_{i=1}^r P(V_i)$.\\

\vspace{1cm}

L'espace attractif de la marche al\'{e}atoire $x_n$ sera une petite  boule  autour de $v_{x_n}$ et l'espace attractif une petite boule de $B$ autour de $\widetilde{H_{x_n}}$. Idem pour les autres joueurs de ping-pong.

\noindent Le Th\'{e}or\`{e}me \ref{titsproba1} d\'{e}coule imm\'{e}diatement du Lemme \ref{ping} et  des Propositions \ref{proposition1}, \ref{proposition2} et \ref{proposition3} suivantes.
\begin{proposition}(Contraction)\\
Pour tout entier $n$ et tout $\epsilon>0$, on note $A_n{(\epsilon)}$ l'\'{e}v\'{e}nement $[x\in B, \;  \delta(x,\widetilde{H_{x_n}})\geq \epsilon \Longrightarrow \delta(x_n\cdot x, v_{x_n})\leq \epsilon]$. Alors il existe $\rho\in ]0,1[$ tel que pour tout entier $n$ assez grand:

$$\lim \frac{1}{n} \log \PP \left(\Omega \setminus A_n(\rho^n) \right)  < 0$$
En d'autres termes, avec probabilit\'{e} tendant vers $1$ exponentiellement vite, le point $1$ du lemme de ping-pong est v\'{e}rifi\'{e} avec $V_{x_n}$ la $\rho^n$-boule de centre $v_{x_n}$ et $H_{x_n}$ l'intersection entre $B$ et la $\rho^n$-boule autour de l'espace $\widetilde{H_{x_n}}$.\\
Une assertion similaire est valable pour les marches ${x_n}^{-1}$, ${x'}_n$ et ${x'_n}^{-1}$.
\label{proposition1}\end{proposition}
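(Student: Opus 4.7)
\textbf{Strat\'{e}gie.} Par le Lemme \ref{furstprojective} et la d\'{e}finition de la distance sur $B$ comme maximum des distances de Fubini-Study dans les facteurs $P(V_i)$ (D\'{e}finition \ref{distfront}), il suffit d'\'{e}tablir l'\'{e}nonc\'{e} analogue dans chacune des $r$ repr\'{e}sentations irr\'{e}ductibles proximales $(\rho_i,V_i)$ s\'{e}par\'{e}ment, puis de retenir le $\rho$ le plus grand. Fixons une telle repr\'{e}sentation $V$ avec la base orthonorm\'{e}e $F=(e_1,\ldots,e_d)$ et le produit scalaire de la Proposition \ref{mostowcons}.

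\textbf{Ingr\'{e}dient g\'{e}om\'{e}trique d\'{e}terministe.} \'{E}crivons $x_n = kau$ (d\'{e}composition $KA^+K$) avec $a=\operatorname{diag}(a_1(n),\ldots,a_d(n))$ dans la base $F$. Pour $[x]\in P(V)$ repr\'{e}sent\'{e} par un vecteur $x$ de norme $1$, posons $u\cdot x = \sum_i c_i e_i$ (d'o\`{u} $\sum|c_i|^2=1$ par orthogonalit\'{e} de $u$). Un calcul direct fond\'{e} sur $\delta([y],[z]) = \|y\wedge z\|/(\|y\|\,\|z\|)$ et l'orthogonalit\'{e} de $k$ donne
$$\delta\bigl(x_n\cdot [x],\ k\cdot [e_1]\bigr) \ =\ \frac{\sqrt{\sum_{j\ge 2}|c_j|^2 a_j(n)^2}}{\sqrt{\sum_j |c_j|^2 a_j(n)^2}} \ \le\ \frac{a_2(n)}{a_1(n)}\cdot\frac{1}{|c_1|}.$$
La quantit\'{e} $|c_1|=|\langle x,u^{-1}e_1\rangle|$ co\"{\i}ncide avec la distance de Fubini-Study de $[x]$ \`{a} l'hyperplan projectif $u^{-1}\cdot e_1^\perp$ de $P(V_i)$ ; apr\`{e}s identification de cet hyperplan avec la composante de $\widetilde{H_{x_n}}=\ker(Mu(x_n))$ via la dualit\'{e} du Lemme \ref{furstprojective}, on obtient donc: sur l'\'{e}v\'{e}nement $\{\delta([x],\widetilde{H_{x_n}})\ge \epsilon\}$ et uniform\'{e}ment en $[x]$,
$$\delta(x_n\cdot [x], v_{x_n})\ \le\ \frac{a_2(n)}{a_1(n)}\cdot \frac{1}{\epsilon}.$$

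\textbf{Conclusion probabiliste.} Pour $\epsilon=\rho^n$, l'in\'{e}galit\'{e} voulue $\le \rho^n$ est impliqu\'{e}e par $a_2(n)/a_1(n)\le \rho^{2n}$. La Proposition \ref{propratio} fournit $\sigma\in\, ]0,1[$ tel que $\EE(a_2(n)/a_1(n))\le \sigma^n$ pour $n$ assez grand ; l'in\'{e}galit\'{e} de Markov entra\^{\i}ne alors
$$\PP\Bigl(\frac{a_2(n)}{a_1(n)} > \rho^{2n}\Bigr)\ \le\ \sigma^n \rho^{-2n},$$
qui d\'{e}cro\^{\i}t exponentiellement pourvu que $\rho > \sqrt{\sigma}$. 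On choisit $\rho$ assez proche de $1$ pour satisfaire cette condition uniform\'{e}ment sur les $r$ repr\'{e}sentations, puis on conclut par in\'{e}galit\'{e} de Boole. Les cas $x_n^{-1}$, $x'_n$ et $(x'_n)^{-1}$ s'obtiennent en appliquant le m\^{e}me argument respectivement aux mesures $\mu_1^{-1}$ (loi de $g_1^{-1}$), $\mu_2$ et $\mu_2^{-1}$, qui v\'{e}rifient toutes les hypoth\`{e}ses requises.

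\textbf{Principal obstacle.} La difficult\'{e} principale est conceptuelle plut\^{o}t que calculatoire: identifier rigoureusement, dans chaque repr\'{e}sentation proximale $(\rho_i,V_i)$, la composante de $\widetilde{H_g}=\ker(Mu(g))$ avec l'hyperplan $u(g)^{-1}\cdot (e_1^{(i)})^\perp$ qui appara\^{\i}t naturellement dans le calcul g\'{e}om\'{e}trique ci-dessus. Une fois cette traduction admise, le contr\^{o}le probabiliste repose uniquement sur la s\'{e}paration stricte des deux premiers exposants de Lyapunov, telle que l'encode la Proposition \ref{propratio} (cons\'{e}quence du Th\'{e}or\`{e}me \ref{interieur} de Guivarc'h-Raugi).
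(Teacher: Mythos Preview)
Your proof is correct and follows essentially the same route as the paper: reduce via the embedding $B\hookrightarrow\prod_i P(V_i)$ to each proximal representation, establish the deterministic contraction inequality (your explicit computation reproduces the content of the unnumbered lemma preceding the proposition, namely that $\sup_{j\neq 1}a_j/a_1<\epsilon^2$ implies $\epsilon$-contraction), and then invoke Proposition~\ref{propratio} together with Markov's inequality to control $a_2(n)/a_1(n)$. The paper's own proof is two sentences citing that lemma and Proposition~\ref{propratio}; you have simply unpacked the details.
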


\begin{proposition}(Proximalit\'{e})\\
Pour tout $t\in ]0,1[$,
$$\underset{n\rightarrow \infty}{\lim}\frac{1}{n} \log \PP \left(\delta(v_{x_n}, \widetilde{H_{x_n}}) \leq t^n \right)< 0$$
Une assertion similaire est valable en rempla\c{c}ant $x_n$ respectivement par ${x_n}^{-1}$, ${x'}_n$ et ${x'_n}^{-1}$.\\
En particulier, le point $2$ du lemme de ping-pong est v\'{e}rifi\'{e}.   \label{proposition2}\end{proposition}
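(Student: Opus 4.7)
L'id\'ee est de combiner l'ind\'ependance asymptotique (Th\'eor\`eme \ref{indasymp}) avec la r\'egularit\'e des mesures stationnaires (Th\'eor\`eme \ref{hausdorff1}). Posons $f(x,y)=\delta\bigl(x,\ker(y)\bigr)$ sur $B\times B^*$. Via le plongement $B\hookrightarrow \prod_{i=1}^r P(V_i)$ et la d\'efinition de la m\'etrique comme maximum des $\delta_i$, un calcul direct donne $f(x,y)=\min_i \delta_i\bigl([x_i],\ker(y_i)\bigr)$, et la formule $\delta_i\bigl([x],\ker(\xi)\bigr)=|\xi(x)|/(\|x\|\,\|\xi\|)$ montre que $f$ est $L$-lipschitzienne sur $B\times B^*$, avec $L$ ne d\'ependant que des plongements.

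\emph{Estim\'ee sous $\nu\otimes\nu^*$.} Chaque $\ker(y_i)$ \'etant un hyperplan de $V_i$, le Th\'eor\`eme \ref{hausdorff1} appliqu\'e \`a la projection $\nu_i$ de $\nu$ sur $P(V_i)$ (qui est l'unique mesure stationnaire de $(\rho_i,V_i)$ par le Th\'eor\`eme \ref{unicite}) fournit des constantes $C,\alpha>0$ telles que $\nu_i\bigl(\{[x]\,;\,\delta_i([x],H)\leq \epsilon\}\bigr)\leq C\epsilon^\alpha$ uniform\'ement en l'hyperplan $H$. Par union finie sur $i$ et int\'egration contre $d\nu^*(y)$, on en d\'eduit $(\nu\otimes\nu^*)(\{f\leq \epsilon\})\leq rC\epsilon^\alpha$.

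\emph{Transfert par ind\'ependance asymptotique.} Fixons $t\in\,]0,1[$ et un param\`etre $c>1$ \`a choisir. Approximons $\mathbf{1}_{\{f\leq t^n\}}$ par la fonction lipschitzienne $\phi_n$ \'egale \`a $1$ sur $\{f\leq t^n\}$, \`a $0$ sur $\{f\geq c\,t^n\}$, et affine en $f$ entre les deux; elle v\'erifie $\mathrm{Lip}(\phi_n)\leq L/((c-1)t^n)$. Soit $\rho_0\in\,]0,1[$ le r\'eel donn\'e par le Th\'eor\`eme \ref{indasymp}. Comme $v_{x_n}=k(x_n)M$ et $\widetilde{H_{x_n}}=\ker(Mu(x_n))$, on obtient pour $n$ assez grand
$$\PP\!\bigl(\delta(v_{x_n},\widetilde{H_{x_n}})\leq t^n\bigr)\;\leq\;\EE\bigl[\phi_n(k(x_n)M,Mu(x_n))\bigr]\;\leq\;rC(ct)^{n\alpha}+\frac{L}{c-1}\Bigl(\frac{\rho_0}{t}\Bigr)^{\!n}.$$
Pour $t>\rho_0$, on choisit $c\in\,]1,1/t[$ (par exemple $c=(1+1/t)/2$), ce qui rend $ct<1$ et $\rho_0/t<1$, d'o\`u la d\'ecroissance exponentielle des deux termes. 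Pour $t\leq \rho_0$, l'inclusion $\{\delta\leq t^n\}\subset \{\delta\leq \tau^n\}$ valable pour tout $\tau>\rho_0$ ram\`ene au cas pr\'ec\'edent. Les analogues pour $x_n^{-1}$, $x'_n$ et $(x'_n)^{-1}$ s'obtiennent par sym\'etrie, en rempla\c{c}ant $\mu$ par $\mu^{-1}$, $\mu_2$ ou $\mu_2^{-1}$ (le moment exponentiel se conserve dans $\SL_d(\RR)$ gr\^ace \`a $\|g^{-1}\|\leq \|g\|^{d-1}$).

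L'obstacle principal est le compromis entre la constante de Lipschitz $L/((c-1)t^n)$ de l'approximation et le taux $\rho_0^n$ de l'ind\'ependance asymptotique: il faut ajuster la largeur de transition de $\phi_n$ pour que les deux termes restent simultan\'ement exponentiellement petits, ce qui impose a priori $t>\rho_0$. L'argument de monotonie en $t$ contourne cette limitation et ach\`eve la preuve; la r\'egularit\'e lipschitzienne de $f$ et la borne de Hausdorff constituent les ingr\'edients essentiels.
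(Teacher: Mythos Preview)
Your proof is correct and follows essentially the same route as the paper: define the Lipschitz function $\phi_1(x,y)=\delta(x,\ker(y))$, smooth the indicator $\mathbf{1}_{\{\phi_1\le t^n\}}$ by a cutoff with Lipschitz constant $\sim t^{-n}$, apply the asymptotic independence Th\'eor\`eme~\ref{indasymp} to transfer to $\nu\otimes\nu^*$, then bound the remaining term via the uniform H\"older regularity of Th\'eor\`eme~\ref{hausdorff1}, and finally reduce small $t$ to $t>\rho_0$ by monotonicity. The paper simply takes $c=2$ where you keep a free parameter, and states the Lipschitz property of $\phi_1$ without the explicit formula $f(x,y)=\min_i \delta_i([x_i],\ker(y_i))$ that you derive; otherwise the arguments coincide.
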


\begin{proposition}(Espaces en position g\'{e}n\'{e}rale)\\
Pour tout $t\in ]0,1[$,
$$\underset{n\rightarrow \infty}{\lim} \frac{1}{n} \log \PP \left(\delta(v_{x_n}, \widetilde{H_{x'_n}}) \leq t^n \right)< 0$$
Une assertion similaire est valable en gardant $v_{x_n}$ et en rempla\c{c}ant $\widetilde{H_{x'_n}}$ par l'espace r\'{e}pulsif de  ${x'_n}^{-1}$. Les r\^{o}le de $x_n$ et $x'_n$ sont interchangeables.\\
En particulier, le point $3$ du lemme de ping-pong est v\'{e}rifi\'{e}.
\label{proposition3}\end{proposition}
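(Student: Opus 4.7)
Le plan consiste \`{a} combiner la convergence presque s\^{u}re exponentielle de la d\'{e}composition $KA^+K$ (Th\'{e}or\`{e}me \ref{convkak}) avec l'estim\'{e}e de Hausdorff du Th\'{e}or\`{e}me \ref{hausdorff1}, reli\'{e}es par une astuce de sym\'{e}trie qui contourne la non-convergence de $\widetilde{H_{x'_n}}$ le long de la marche \`{a} droite. D'abord, via le plongement $B\hookrightarrow\prod_i P(V_i)$ du Lemme \ref{furstprojective} et l'identit\'{e} $\delta(v,\ker(y))=\min_i\delta_i(v_i,\ker(y_i))$, une borne d'union ram\`{e}ne le probl\`{e}me \`{a} une unique repr\'{e}sentation irr\'{e}ductible proximale $(\rho,V)$; on travaille donc dans $P(V)$, avec $v_{x_n}=[k(x_n)e_1]$ et $\widetilde{H_{x'_n}}=[u(x'_n)^{-1}(e_1^\perp)]$.

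L'astuce de sym\'{e}trie est la suivante: comme $(g'_1,\ldots,g'_n)\overset{d}{=}(g'_n,\ldots,g'_1)$ et que cette suite est ind\'{e}pendante de $(g_i)_i$, on a l'\'{e}galit\'{e} en loi jointe $(v_{x_n},\widetilde{H_{x'_n}})\overset{d}{=}(v_{x_n},\widetilde{H_{\tilde x_n}})$, o\`{u} $\tilde x_n=g'_n\cdots g'_1$ est la marche \`{a} \emph{gauche} associ\'{e}e \`{a} $\mu_2$. La probabilit\'{e} \`{a} estimer est inchang\'{e}e, mais les deux facteurs admettent alors des limites presque s\^{u}res exponentielles: par le Th\'{e}or\`{e}me \ref{convkak}, $v_{x_n}\to Z_1$ (o\`{u} $Z_1$ suit la mesure stationnaire sur $P(V)$) et $Mu(\tilde x_n)\to Z_2'$ presque s\^{u}rement \`{a} vitesse exponentielle, et crucialement $Z_1,Z_2'$ sont \emph{ind\'{e}pendantes} puisqu'elles sont fonctions mesurables de suites disjointes ind\'{e}pendantes. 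Par continuit\'{e} lipschitzienne de $y\mapsto\ker(y)$, ceci se transf\`{e}re en $\widetilde{H_{\tilde x_n}}\to\ker(Z_2')$ exponentiellement vite en distance de Hausdorff.

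Posons $Y_1=\delta(v_{x_n},Z_1)$ et $Y_2=d(\widetilde{H_{\tilde x_n}},\ker(Z_2'))$, dont les moyennes d\'{e}croissent \`{a} un taux exponentiel commun $\rho_0<1$. L'in\'{e}galit\'{e} triangulaire donne $\delta(Z_1,\ker(Z_2'))\leq \delta(v_{x_n},\widetilde{H_{\tilde x_n}})+Y_1+Y_2$. Fixons $t\in\,]0,1[$ et choisissons $s\in\,]\rho_0,1[$; on a alors
\[
\PP(\delta(v_{x_n},\widetilde{H_{x'_n}})\leq t^n)\leq \PP(Y_1>s^n/2)+\PP(Y_2>s^n/2)+\PP(\delta(Z_1,\ker(Z_2'))\leq t^n+s^n).
\]
L'in\'{e}galit\'{e} de Markov traite les deux premiers termes (taux exponentiel $\rho_0/s<1$). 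Pour le troisi\`{e}me, on conditionne par $Z_2'$: comme $Z_1$ est ind\'{e}pendante de $Z_2'$ et suit la mesure stationnaire sur $P(V)$, le Th\'{e}or\`{e}me \ref{hausdorff1} donne, uniform\'{e}ment en l'hyperplan $H$ de $V$, $\PP(\delta(Z_1,H)\leq\epsilon)\leq C\epsilon^\alpha$, donc ce terme est major\'{e} par $C(t^n+s^n)^\alpha$, exponentiellement petit puisque $s<1$.

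L'obstacle principal est la non-convergence presque s\^{u}re de $\widetilde{H_{x'_n}}$ (Remarque \ref{hehe}); il est lev\'{e} par la substitution distributionnelle de la marche \`{a} droite par la marche \`{a} gauche, l\'{e}gitime parce que les deux suites $(g_i)$ et $(g'_i)$ sont ind\'{e}pendantes. Ceci acquis, l'argument se r\'{e}duit \`{a} un assemblage d'ingr\'{e}dients d\'{e}j\`{a} \'{e}tablis: convergence exponentielle des composantes $KA^+K$ (Th\'{e}or\`{e}me \ref{convkak}) et r\'{e}gularit\'{e} Hausdorff de la mesure stationnaire (Th\'{e}or\`{e}me \ref{hausdorff1}).
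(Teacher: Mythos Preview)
Ta preuve est correcte, mais elle prend un d\'{e}tour inutile. L'astuce de sym\'{e}trie (remplacer $x'_n$ par la marche \`{a} gauche $\tilde x_n$ pour faire converger $\widetilde{H_{x'_n}}$) est l'id\'{e}e cl\'{e} de la Proposition~\ref{proposition2}, o\`{u} $v_{x_n}$ et $\widetilde{H_{x_n}}$ proviennent de la \emph{m\^{e}me} marche et ne sont donc pas ind\'{e}pendants a priori. Ici, au contraire, $v_{x_n}$ et $\widetilde{H_{x'_n}}$ sont d\'{e}j\`{a} fonctions de suites ind\'{e}pendantes $(g_i)$ et $(g'_i)$, et le papier exploite cela directement~: il fait converger uniquement $v_{x_n}\to Z$ par le Th\'{e}or\`{e}me~\ref{convkak}, r\'{e}duit par in\'{e}galit\'{e} triangulaire \`{a} estimer $\PP\big(\delta(Z,\widetilde{H_{x'_n}})\leq t^n\big)$, puis conditionne par $\widetilde{H_{x'_n}}$ (hyperplan al\'{e}atoire mais ind\'{e}pendant de $Z$) et applique l'uniformit\'{e} en $H$ du Th\'{e}or\`{e}me~\ref{hausdorff1}. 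Il n'y a donc aucun besoin de faire converger $\widetilde{H_{x'_n}}$ --- la non-convergence que tu signales (Remarque~\ref{hehe}) n'est pas un obstacle ici. Ton passage par la distance de Hausdorff entre hyperplans et la lipschitzianit\'{e} de $y\mapsto\ker(y)$ est valide, mais \'{e}vitable.
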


Pour prouver la Proposition \ref{proposition1}, nous commen\c{c}ons par un lemme simple mais crucial. Gr\^{a}ce \`{a} ce dernier, nous pouvons avoir la contraction et faire du ping-pong sans avoir recours aux valeurs propres, mais plut\^{o}t aux valeurs singuli\`{e}res i.e. \`{a} la d\'{e}composition $G=KA^+K$.
\begin{lemme}Soit $(\rho,V)$ une repr\'{e}sentation rationnelle irr\'{e}ductible de $G$. On consid\`{e}re la base $F=(e_1,\cdots, e_d)$  et le produit scalaire obtenus par le Corollaire \ref{mostowcons}. On note $\delta$ la distance de Fubini-Study  sur l'espace projectif $P(V)$ (D\'{e}finition \ref{distfront}). Pour tout $a\in A$,  la matrice diagonale $\rho(a)$  est not\'{e}e dans cette base par $diag(a_1,\cdots, a_d)$. On pose $v_{g,\rho}=k(g)\cdot[e_1]$ et $\widetilde{H_{g,\rho}}$ l'hyperplan noyau de la forme lin\'{e}aire $u(g)^{-1}\cdot e_1^*$.\\
Si $\underset{j\neq 1}{\sup} \frac{a_j(g)}{a_1(g)} < \epsilon^2$ alors $g$ est ${\epsilon}$-contractant dans le sens suivant: $$\delta([x],\widetilde{H_{g,\rho}})\geq  \epsilon \Longrightarrow \delta([x],v_{g,\rho}) \leq \epsilon$$\end{lemme}
\noindent La preuve est un exercice d'alg\`{e}bre lin\'{e}aire. Nous r\'{e}f\'{e}rons \`{a} \cite[Proposition 3.1]{breuillard} pour une preuve o\`{u} une sorte de r\'{e}ciproque est d\'{e}montr\'{e}e.
\begin{proof}[Preuve   de la Proposition \ref{proposition1}:]
On rappelle que $B \hookrightarrow \prod_{i=1}^r P(V_i)$. La Proposition \ref{propratio} montre que pour tout $i$, il existe $\eta_i\in ]0,1[$ tel que $\rho_i(x_n)$ est $\eta_i^n$-contractant sur un ensemble de probabilit\'{e} exponentiellement proche de $1$. On peut alors appliquer  le lemme précédent pour la représentation $(\rho_i,V_i)$. \end{proof}

\begin{proof}[Preuve de la Proposition \ref{proposition2}:]
Soit $\phi_1$ la fonction r\'{e}elle  d\'{e}finie sur $B \times B^*$ par: $\phi_1(x,y)=\delta(x,\ker(y))$.
On peut v\'{e}rifier que $\phi_1$ est lipschitzienne.  De plus, pour tout $\epsilon>0$,  on peut construire une fonction lipschitzienne  $\phi_{2,\epsilon}: \RR \longrightarrow \RR$ telle que
$\mathds{1}_{[-\epsilon; \epsilon]}\leq \phi_{2,\epsilon} \leq \mathds{1}_{[-2\epsilon; 2\epsilon]}$ et ayant une constante de Lipschitz de l'ordre de $\frac{1}{\epsilon}$. On pose $\phi_{\epsilon}=\phi_{2,\epsilon}\circ \phi_1$, qui est lipschitzienne  de constante de Lipschitz de l'ordre de $\frac{1}{\epsilon}$. Avec ces notations, nous avons pour tout $t\in ]0,1[$:
$$\PP \left( \delta(v_{x_n}, \widetilde{H_{x_n}}) \leq t^n \right) = \EE \big[ \phi_{t^n}\left(k(x_n)M, Mu(x_n)\right)\big]$$
 Le Th\'{e}or\`{e}me \ref{indasymp} d'ind\'{e}pendance asymptotique montre  qu'il existe $\rho\in ]0,1[$ et  deux variables al\'{e}atoires ind\'{e}pendantes $Z\in B$ et $T\in B^*$  telles que:
 $$\PP \left( \delta(v_{x_n}, \widetilde{H_{x_n}}) \leq t^n \right) \preceq \EE \left( \phi_{t^n}(Z, T)\right) + \frac{\rho^n}{t^n}$$
Sans perte de g\'{e}n\'{e}ralit\'{e}, on peut supposer que $t\in ]\rho, 1[$.  Or,
$$\EE \left( \phi_{t^n}(Z, T)\right) \leq \PP \left(  \delta(Z, \ker (T)) \leq 2 t^n \right)\leq  \sum_{i=1}^r \PP\left(\delta(Z_i, \ker(T_i))\leq 2 t^n \right) $$
L'hyperplan $\ker (T_i) $ de $V_i$ est  al\'{e}atoire  mais ind\'{e}pendant de $Z_i$. Comme  $Z_i$ a pour loi l'unique mesure de probabilit\'{e} $\mu$-invariante sur l'espace projectif $P(V_i)$, alors l'uniformit\'{e} dans le Th\'{e}or\`{e}me \ref{hausdorff1} permet de montrer que la quantit\'{e} pr\'{e}c\'{e}dente d\'{e}cro\^{\i}t exponentiellement vite vers z\'{e}ro.
  \end{proof}

\begin{proof}[Preuve de la Proposition \ref{proposition3}:]

D'apr\`{e}s le Th\'{e}or\`{e}me \ref{convkak}, le point attractif $v_{x_n}=k(x_n)M$ converge presque s\^{u}rement vers une variable al\'{e}atoire $Z$ et de fa\c{c}on exponentielle. Par l'in\'{e}galit\'{e} triangulaire, il suffit donc de d\'{e}montrer que pour tout $t\in ]0,1[$, la probabilit\'{e}  $\PP \left(\delta(Z,\widetilde{H_{x'_n}})\leq t^n \right)$ d\'{e}cro\^{\i}t exponentiellement vite vers $0$. Ceci est garanti gr\^{a}ce \`{a} l'ind\'{e}pendance entre $Z$ et $\widetilde{H_{x'_n}}$ et le Th\'{e}or\`{e}me \ref{hausdorff1}.   \end{proof}
\paragraph{Probl\`{e}me ouvert:} Il est int\'{e}ressant de trouver des sous-groupes libres v\'{e}rifiant certaines contraintes (voir par exemple \cite{fortementdense}). Un probl\`{e}me dans cette lign\'{e}e serait de savoir si le sous-groupe g\'{e}n\'{e}rique $\langle x_n,x'_n\rangle$ trouv\'{e} ci-dessus intersecte trivialement toute vari\'{e}t\'{e} alg\'{e}brique propre de $G$? Cela donnerait un sous-groupe libre de $\Gamma$ dont l'intersection avec toute vari\'{e}t\'{e} alg\'{e}brique propre est triviale. Des r\'{e}sultats proches mais moins forts sont contenus dans \cite{aoun1}.


 


\end{document}